\documentclass[11pt, leqno]{amsart} 
\usepackage{amssymb,amscd,amsfonts,amsbsy}
\usepackage{latexsym}
\usepackage{exscale}
\usepackage{amsmath,amsthm,amsfonts,amsxtra}
\usepackage{mathrsfs}
\usepackage{xcolor}
\usepackage[colorlinks=true, linkcolor=blue, citecolor=red, urlcolor=red, 
]{hyperref}
\usepackage{esint} 
\usepackage{stmaryrd}
\usepackage{pifont}
\usepackage{bm}

\usepackage[utf8]{inputenc}

\calclayout

\allowdisplaybreaks

\makeatletter
\@namedef{subjclassname@2020}{%
	\textup{2020} Mathematics Subject Classification}
\makeatother

\theoremstyle{plain}
\newtheorem{theorem}[equation]{Theorem}

\newtheorem{lemma}[equation]{Lemma}

\theoremstyle{definition}
\newtheorem{definition}[equation]{Definition}

\numberwithin{equation}{section} 

\def\A{\mathcal{A}}
\def\B{\mathcal{B}}
\def\C{\mathbb{C}}
\def\D{\mathcal{D}}

\def\F{\mathcal{F}}

\def\H{\mathcal{H}}
\def\K{\mathcal{K}}

\def\M{\mathcal{M}}
\def\bM{\mathbb{M}}
\def\N{\mathbb{N}}
\def\Q{\mathcal{Q}}
\def\R{\mathbb{R}}
\def\Rd{\mathbb{R}^d}

\def\S{\mathcal{S}}
\def\bS{\mathbb{S}}

\def\X{\mathbb{X}}
\def\Y{\mathbb{Y}}
\def\Z{\mathbb{Z}}
\def\c{\boldsymbol{c}}
\def\m{\boldsymbol{m}}
\def\p{\mathfrak{p}}
\def\q{\mathfrak{q}}
\def\w{\omega}
\def\a{\bm{\alpha}}
\def\b{\bm{b}}

\def\supp{\operatorname{supp}}

\def\BMO{\operatorname{BMO}}
\def\CMO{\operatorname{CMO}}
\def\Stop{\operatorname{Stop}}

\newcommand{\norm}[1]{\left\lVert#1\right\rVert}

\DeclareMathOperator*{\esssup}{ess\,sup}
\DeclareMathOperator*{\essinf}{ess\,inf}

\begin{document}

\title{Multilinear extrapolation on Banach function spaces}

\author{Mingming Cao}
\address{Mingming Cao\\
Institute for Advanced Study in Mathematics\\ 
Harbin Institute of Technology\\ 
Harbin 150001\\
People's Republic of China}
\email{mingming.cao@hit.edu.cn}

\thanks{The first author was supported by the National Natural Science Foundation of China (No. 12571101, 12526417, and 12671117).}

\year=2026 \month=10 \day=26

\date{\today}

\subjclass[2020]{42B20, 42B25}


\keywords{
Rubio de Francia extrapolation,
Banach function spaces,
Multilinear operators,
Compactness,
Commutators, 
Weighted inequalities}

\begin{abstract}
We solve an open problem on the multilinear Rubio de Francia extrapolation on Banach function spaces. 
The first result states that, under the assumption of boundedness of the multisublinear Hardy--Littlewood maximal operator, the multivariable boundedness on Banach function spaces can be extrapolated from some weighted Lebesgue space, including the cases where some exponents may be infinite. Under the same assumption, the second result asserts that if a multilinear operator is bounded on some weighted Lebesgue space, then its compactness can be extrapolated from some unweighted Lebesgue space to Banach function spaces. Our results do not require any convexity or concavity conditions on Banach function spaces. The main ingredients in the proof are a multilinear version of the Rubio de Francia algorithm, a stopping-time argument, convex analysis, and a compactness criterion on Banach function spaces. As applications of extrapolation theorems, we obtain multilinear extrapolation for the boundedness and compactness of commutators on Banach function spaces. These results can be further applied to specific operators, which cover both the full range case including multilinear Calder\'{o}n--Zygmund operators, multilinear pseudo-differential operators, and bilinear paraproducts, and the limited range case including the bilinear Hilbert transform, multilinear rough singular integrals, and bilinear oscillatory Fourier multipliers. 
\end{abstract}

\maketitle

\section{Introduction}

\subsection{Motivation}
The Rubio de Francia extrapolation theorem \cite{RdF} is a cornerstone of modern harmonic analysis and the theory of weighted norm inequalities. The classical result states that if an operator $T$ is bounded on a weighted Lebesgue space $L^{p_0}(w_0)$ for a single exponent $p_0 \in (1, \infty)$ and all weights $w \in A_{p_0}$, then $T$ is automatically bounded on $L^p(w)$ for all $p \in (1, \infty)$ and $w \in A_p$. This profound principle implies that one just needs to verify a weighted bound at a single, convenient exponent to obtain a full range of weighted estimates. Over the decades, this result has been extensively generalized to the off-diagonal and limited range cases (see \cite{CMP-book}), and beyond Lebesgue spaces (see \cite{CMM, CH, CW, Nie23}), and found a wealth of applications in harmonic analysis, PDE, and geometric measure theory (see \cite{CMM, MMMMM, MMMM16}).
It is worth mentioning that the fundamental insight in \cite{CMM, Nie23} is that extrapolation on Banach function spaces is deeply tied to the boundedness of the Hardy--Littlewood maximal operator.

In the multilinear setting, a breakthrough was made by Lerner, Ombrosi, P\'erez, Torres, and Trujillo-Gonz\'alez \cite{LOPTT}, who introduced the natural multilinear Muckenhoupt classes $A_{\vec{p}}$, which captures the intrinsic multivariable structure of multilinear Calder\'on--Zygmund operators and the multisublinear Hardy--Littlewood maximal operator. Recently, Li, Martell, and Ombrosi \cite{LMO} successfully established Rubio de Francia extrapolation for the more general classes $A_{\vec{p}, (\vec{r}, s)}$. These classes not only characterize the boundedness of the multisublinear Hardy--Littlewood maximal operator and multilinear Calder\'on--Zygmund operators, but also naturally capture the behavior of operators with restricted ranges of boundedness, such as the bilinear Hilbert transform (see \cite{CDO, LMO}). A quantitative version of \cite{LMO} was obtained by Nieraeth \cite{Nie19} using a different method.

Beyond boundedness, the compactness of an operator is of great importance, since it determines the spectral properties and enables the application of Fredholm theory to boundary value problems (see \cite{HMT, MMMMM}). The recent work of Hyt\"onen and Lappas \cite{HL} revealed a compact counterpart of Rubio de Francia's theorem: if a linear operator $T$ is bounded on $L^{p_0}(w_0)$ for some $p_0 \in (1, \infty)$ and all weights $w \in A_{p_0}$, then the unweighted compactness of $T$ on $L^{p_1}(\Rd)$ for some $p_1 \in (1, \infty)$ implies its compactness on $L^p(w)$ for all $p \in (1, \infty)$ and $w \in A_p$. This result was extended to the multilinear setting by Cao, Olivo, and Yabuta \cite{COY}, who developed a weighted Kolmogorov--Riesz theorem and multilinear interpolation techniques to handle the quasi-Banach range $p \in (0, 1)$. Parallel to this, Lorist and Nieraeth \cite{LN} obtained the extrapolation of compactness on Banach function spaces $\X$ with certain convexity and concavity, by proving that the compactness of $T$ on some unweighted Lebesgue space yields its compactness on $\X$ whenever the Hardy--Littlewood maximal operator is bounded on $\X$ and $\X'$.

Despite these advances, it remains a significant open problem to establish multilinear extrapolation on general Banach function spaces. As explicitly pointed out by Lorist and Nieraeth \cite[Remark 1.5]{LN24} and Nieraeth \cite[Conjecture 6.5]{Nie23}, extending multilinear extrapolation (both boundedness and compactness) to the general Banach function space setting has not yet been achieved in its full expected generality. The main obstructions are twofold: (i) the inherent structural complexity of the multilinear Muckenhoupt classes $A_{\vec{p}, (\vec{r}, s)}$ interacting with the geometry of Banach function spaces; (ii) the severe technical difficulties in handling the quasi-Banach range $p \in (0, 1)$ and multilinear compactness, where classical tools like the Rubio de Francia algorithm and multilinear complex interpolation fail. 

Solving this open problem is deeply motivated by both theoretical necessity and practical applications: (1) The resolution of this open problem will complete the multilinear extrapolation framework. By bridging multilinear analysis with the intrinsic geometric properties of Banach function spaces, we provide a definitive answer to the questions left open in \cite{LN24} and \cite{Nie23}; (2) Resolving this open problem will allow us to automatically deduce the compactness for a broad variety of multilinear operators/commutators on weighted Lorentz spaces, variable Lebesgue spaces, and Morrey spaces, directly from their unweighted compactness.

\subsection{Main theorems}
Now we state our main theorems. The relevant definitions and notions are postponed until Section \ref{sec:pre}.

Our first result is multilinear extrapolation of boundedness on Banach function spaces. 
\begin{theorem}\label{thm:BFS}
Let $\vec{r} = (r_1, \ldots, r_m) \in [1, \infty)^m$ and $s \in (1, \infty]$. Let $\F$ be a family of $(m+1)$-tuples of nonnegative measurable functions. Let $\X_1, \ldots, \X_m$ be Banach function spaces such that
\begin{align*}
\mathcal{N}_{\vec{\X}} 
:= \|\M_{m \vec{r}, (ms)'}\|_{\X_1^{\frac1m} \times \cdots \times \X_m^{\frac1m}\times \Y' \to L^1}<\infty,
\end{align*}
where $\X = \prod_{j=1}^m \X_j$ and $\Y = \X^{\frac1m}$. If there exists some $\vec{p} = (p_1, \ldots, p_m)$ with $(\vec{r}, s) \preceq \vec{p}$ such that for all $\vec{w} \in A_{\vec{p}, (\vec{r}, s)}$,
\begin{align*}
\|f\|_{L^p_w}
\le \Phi([\vec{w}]_{\vec{p}, (\vec{r}, s)}) \prod_{j=1}^m \|f_j\|_{L^{p_j}_{w_j}}, 
\qquad (f, f_1, \ldots, f_m) \in \F,
\end{align*}
where $\Phi: [1, \infty) \to [1, \infty)$ is a non-decreasing function, then 
\begin{align*}
\|f\|_{\X} 
\le 2^{2m+1} \, \Phi \big(C_0 \mathcal{N}_{\vec{\X}}^m \big) 
\prod_{j=1}^m \|f_j\|_{\X_j}, \qquad (f, f_1, \ldots, f_m) \in \F,
\end{align*}
where $C_0 = C_0(d, m, \vec{p}, \vec{r}, s)$.
\end{theorem}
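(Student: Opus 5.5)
The plan is to adapt the Lorist--Nieraeth / Nieraeth "duality plus Rubio de Francia" scheme to the multilinear classes $A_{\vec{p},(\vec{r},s)}$. The single maximal operator is replaced by the multi(sub)linear operator $\M_{m\vec{r},(ms)'}$ acting on $\X_1^{\frac1m}\times\cdots\times\X_m^{\frac1m}\times\Y'$. Set $\Y=\X^{\frac1m}$. Since $\|f\|_\X=\||f|^{\frac1m}\|_{\Y}^m$ and $\Y$ is a Banach function space (I take from Section \ref{sec:pre} that $\X^{\frac1m}$ is normable, or at least that its Köthe dual gives the norm via Lorentz--Luxemburg), there is a nonnegative $h\in\Y'$ with $\|h\|_{\Y'}\le 1$ and
\begin{align*}
\|f\|_\X^{\frac1m}\le 2\int |f|^{\frac1m} h\,dx .
\end{align*}
By homogeneity we may normalize $\|f_j\|_{\X_j}=1$ for every $j$. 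The goal is then to bound $\int |f|^{\frac1m}h$ by a constant.

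\textbf{Step 1 (Rubio de Francia algorithm).} We build majorants $F_j\ge |f_j|^{\frac1m}$ with $\|F_j\|_{\X_j^{\frac1m}}\le 2$, and $H\ge h$ with $\|H\|_{\Y'}\le 2$. These should be chosen so that the tuple $(F_1,\dots,F_m,H)$ is "stable" under $\M_{m\vec r,(ms)'}$ in a multilinear sense. The standard one-function iteration does not work, because the operator is multilinear and mixes all components at once. I would therefore use a coupled iteration: define the map
\begin{align*}
\mathcal{R}(G_1,\dots,G_{m+1}) := \sum_{k\ge0}\frac{\mathcal{T}^k(G_1,\ldots,G_{m+1})}{(2\mathcal{N}_{\vec\X})^k},
\end{align*}
where $\mathcal T$ sends each component to a suitable geometric-mean type combination that is controlled through $\M_{m\vec r,(ms)'}$. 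Alternatively, and probably more robustly, I would use convex analysis. The bound $\mathcal N_{\vec\X}<\infty$ is equivalent, via a minimax / Hahn--Banach argument on the convex set $\{(F_j)_j,H\}$, to the existence of weights $w_j$ and a weight $v$ built from $F_j,H$ such that $\vec w\in A_{\vec p,(\vec r,s)}$ with characteristic $\lesssim \mathcal N_{\vec\X}^m$. The quasi-Banach issue ($\X_j^{1/m}$ may fail to be normed) is what forces this detour. The abstract promises a stopping-time argument, and I would use it to show that the $L^1$ bound of the $(m+1)$-linear maximal function implies the $A_{\vec p,(\vec r,s)}$ condition for the weights $w_j := F_j^{\alpha_j}(\ldots)$, with explicit exponents dictated by $\vec p,\vec r,s$.

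\textbf{Step 2 (weights and Hölder).} With the weights from Step 1, we write
\begin{align*}
\int |f|^{\frac1m}h\le \int |f|^{\frac1m}H=\int |f|^{\frac1m} w^{\frac1{p}\cdot\frac1m}\cdot w^{-\frac1{pm}}H .
\end{align*}
Hölder with exponents $pm$ and $(pm)'$ gives $\|f\|_{L^p_w}^{1/m}$ times $\big(\int H^{(pm)'}w^{-(pm)'/(pm)}\big)^{1/(pm)'}$. The weights are chosen so that the second factor is bounded by $\int \prod_j F_j^{\theta_j}H^{\theta_0}\lesssim \mathcal N_{\vec\X}$. We then apply the hypothesis, with $\Phi$ evaluated at $C_0\mathcal N_{\vec\X}^m$ by monotonicity. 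Next, $\|f_j\|_{L^{p_j}_{w_j}}$ is controlled by $\int F_j^{mp_j}w_j\le\int\prod F_i^{\cdot}H^{\cdot}$, which again is bounded by the $L^1$ norm of the maximal operator on the tuple. Collecting the factors of $2$ from the duality and from the algorithm should give the constant $2^{2m+1}$. The endpoint cases $p_j=\infty$ or $s=\infty$ are handled by reading the corresponding weights as $L^\infty$-type normalizations, with $w_j^{-1}\lesssim$ bounded by construction.

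\textbf{Main obstacle.} The hard part is Step 1: producing, from the single $L^1$ bound for $\M_{m\vec r,(ms)'}$, a tuple $(F,H)$ whose induced weights lie in $A_{\vec p,(\vec r,s)}$, uniformly and without convexity of $\X_j$. Sublinear iteration is not available because the operator is not bounded on each factor separately. I would first try a fixed-point / Ky Fan minimax argument on the product of unit balls, which is compact in a weak sense after truncation to bounded sets and finite measure. This would yield an "eigen-tuple" satisfying $\M_{m\vec r,(ms)'}(\vec F,H)\le C\mathcal N_{\vec\X}\prod F_j^{\cdot}H^{\cdot}$ pointwise, which is exactly the multilinear $A_1$-type condition. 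A stopping-time argument would then convert this pointwise condition into the $A_{\vec p,(\vec r,s)}$ bound.
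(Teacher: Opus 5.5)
Your proposal has a genuine gap: Step 1, which is the whole content of the theorem, is not carried out.

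Your outer framework is right and matches the paper's. You dualize in the Banach function space $\Y=\X^{\frac1m}$ and regard $\Y_j=\X_j^{\frac1m}$ ($j\le m$) and $\Y_{m+1}=\Y'$ as $m+1$ spaces with $\prod_{j=1}^{m+1}\Y_j=L^1$. You then build weights $v_j$ with $\prod_j v_j=1$, put $w_j=v_j^m$, and close with H\"older in $L^{mp}\times L^{(mp)'}$ and the monotonicity of $\Phi$.

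In Step 1, however, you only list routes you would try, and none of them works as stated.

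\begin{itemize}
\item \emph{Rubio de Francia series.} This needs a sublinear operator bounded on each space separately. Note that $\X_j^{\frac1m}$ is the $m$-convexification of $\X_j$, hence a Banach function space, so normability is not the obstacle. The obstacle is that the joint bound only controls objects in the dual spaces $\Y_j'$. Returning to $\prod_{i\neq j}\Y_i$ requires a nonlinear, non-monotone factorization, so there is no operator to iterate.
\item \emph{Ky Fan or fixed point on the product of unit balls.} This is not justified. Those balls are not weakly compact in a general Banach function space. The truncation you mention does not preserve the constraints $F_j\ge h_j$ or the maximal inequality. The target inequality also has no evident convexity in $(F_1,\dots,F_m,H)$.
\item \emph{The eigen-tuple condition.} The bound $\M_{m\vec r,(ms)'}(\vec F,H)\lesssim\prod_jF_j\cdot H$ is too weak, because the weights require one condition per index. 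Take $m=1$, $r=1$, $s=\infty$, $p=2$. The pair $(F,H)=(\omega,\omega^{-1})$ with $\omega\in A_2$ satisfies $\M_{1,1}(F,H)\le[\omega]_{A_2}FH$. The natural weight $w=(FH)^{1/2}F^{-1}=\omega^{-1}$, for which $Fw=Hw^{-1}=(FH)^{1/2}$ as your Step 2 needs, is in $A_{2,(1,\infty)}$ only if $\omega^2\in A_2$. This fails for $\omega=|x|^a$ with $d/2\le|a|<d$.
\end{itemize}

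What the paper proves (Theorem 3.1 via Lemma 3.3) is the existence of majorants $H_j\ge h_j$ with $\|H_j\|_{\Y_j}\le2\|h_j\|_{\Y_j}$ such that, for all cubes $Q$ in the $3^d$ dyadic lattices,
\[
\prod_{j=1}^{m+1}\langle H_j\rangle_{r_j,Q}^{1-\frac{r_j}{p_j}}\lesssim\mathscr M\prod_j\Big(\essinf_Q HH_j^{-r_j}\Big)^{\frac1{p_j}},\qquad H=\prod_jH_j .
\]
Note the separate essential infimum for each $j$. Then $v_j=H^{1/p_j}H_j^{-1}$ works. Moreover, $\prod_j\|h_jv_j\|_{L^{p_j}}\le\|H\|_{L^1}\le\prod_j\|H_j\|_{\Y_j}$ uses only $\prod_j\Y_j=L^1$. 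So, unlike in your Step 2, no factor $\mathcal N_{\vec\X}$ appears outside $\Phi$.

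The majorants are obtained in four steps.
\begin{itemize}
\item A stopping-time construction produces sparse families.
\item The sparse operators $\mathcal A^j_{\mathcal S}$ are bounded into $\Y_j'$ by duality with the maximal bound.
\item Schep's identity $\Y_j'=\prod_{i\neq j}\Y_i$ splits $\mathcal A^j_{\mathcal S}$ into factors in the other spaces. This gives, for every admissible tuple $\bm f$, an admissible tuple $\bm g$ satisfying the display with its right side evaluated at $(\bm f,\bm g)$.
\item A fixed point is found in logarithmic coordinates $a_j=\log(H_j/h_j)\in L^1(\mu)$. There the admissible set is convex and weakly compact (Dunford--Pettis, since $\int e^{a_j}\,d\mu$ is uniformly bounded). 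The relation is convex, by log-convexity of $a\mapsto\langle e^ah\rangle_{r,Q}$ and concavity of $b\mapsto\log(e^b-1)$, and closed by Fatou. Ces\`aro averaging and a finite-intersection argument over finite collections of cubes finish it.
\end{itemize}

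These are the missing ingredients; without them your argument does not establish the theorem.
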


On the other hand, we would like to investigate multilinear extrapolation of compactness on Banach function spaces. 
To be explicit, let us recall the compactness of multilinear operators. Given quasi-normed spaces $\mathscr{X}_1, \ldots, \mathscr{X}_m, \mathscr{X}$, an $m$-linear operator $T: \mathscr{X}_1 \times \cdots \times \mathscr{X}_m \to \mathscr{X}$ is said to be {\tt compact} if $T(B_1 \times \cdots \times B_m)$ is {\tt precompact} in $\mathscr{X}$ for all bounded sets $B_j \subset \mathscr{X}_j$, $j=1, \ldots, m$, i.e., $\overline{T(B_1 \times \cdots \times B_m)}$ is a compact subset of $\mathscr{X}$.

We establish multilinear extrapolation of compactness on Banach function spaces below. 
\begin{theorem}\label{thm:BFScpt}
Let $\vec{r} = (r_1, \ldots, r_m) \in [1, \infty)^m$ and $s \in (1, \infty]$. Let $\X_1, \ldots, \X_m$ be Banach function spaces such that 
\begin{align*}
\M_{m \vec{r}, (ms)'}: \X_1^{\frac1m} \times \cdots \times \X_m^{\frac1m}\times \Y' \to L^1(\Rd) \text{ is bounded},
\end{align*} 
where $\Y = \X^{\frac1m}$ and $\X = \prod_{j=1}^m \X_j$. If $T$ is an $m$-linear operator such that  
\begin{enumerate}
\item $T$ is bounded from $L^{p_1}_{u_1} \times \cdots \times L^{p_m}_{u_m}$ to $L^p_u$ for some $\vec{p} = (p_1, \ldots, p_m)$ with $(\vec{r}, s) \preceq \vec{p}$, and for all $\vec{u} \in A_{\vec{p}, (\vec{r}, s)}$, where $\frac1p = \sum_{j=1}^m \frac{1}{p_j} > 0$ and $u = \prod_{j=1}^m u_j$; 

\item $T$ is compact from $L^{q_1}_{v_1} \times \cdots \times L^{q_m}_{v_m}$ to $L^q_v$ for some $\vec{q} = (q_1, \ldots, q_m)$ with $(\vec{r}, s) \preceq \vec{q}$, and for some $\vec{v} \in A_{\vec{q}, (\vec{r}, s)}$, where $\frac1q = \sum_{j=1}^m \frac{1}{q_j}$ and $v = \prod_{j=1}^m v_j$;
\end{enumerate}
then $T: \X_1 \times \cdots \times \X_m \longrightarrow \X$ is compact.
\end{theorem}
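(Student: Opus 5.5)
Our plan is to reduce the compactness on $\X_1\times\cdots\times\X_m$ to compactness on a single weighted Lebesgue space with weights built by a multilinear Rubio de Francia algorithm, and then to use a compactness criterion on Banach function spaces.

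\emph{Step 1: weighted compactness.} We first combine hypotheses (1) and (2) through the multilinear compact extrapolation of \cite{COY}, or equivalently interpolation of compactness between weighted spaces. This should yield that $T$ is compact from $L^{p_1}_{w_1}\times\cdots\times L^{p_m}_{w_m}$ to $L^p_w$ for every $\vec{w}\in A_{\vec{p},(\vec r,s)}$. To get this we would first obtain the full range of weighted bounds, including infinite exponents, from (1) via \cite{LMO, Nie19}. We would then interpolate compactness at $(\vec q,\vec v)$ with boundedness at a nearby point, chosen so that the target $(\vec p,\vec w)$ lies on the segment. The openness of the classes $A_{\vec p,(\vec r,s)}$ provides this nearby point.

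\emph{Step 2: localization.} Fix bounded sets $B_j\subset \X_j$, normalized so that $\|f_j\|_{\X_j}\le1$. We need to show that $T(B_1\times\cdots\times B_m)$ is precompact in $\X$. We plan to use a Kolmogorov--Riesz or Fréchet type criterion on Banach function spaces, in the spirit of \cite{LN}, which in this setting needs no convexity. We test against $g\in \Y'$ (after passing to $\Y=\X^{1/m}$ to linearize the $m$-fold product) and apply the Rubio de Francia construction. The bounded operator $\M_{m\vec r,(ms)'}$ produces majorants $h_j\ge |f_j|$ with $\|h_j\|_{\X_j}\lesssim 1$ and $h\ge g$ with $\|h\|_{\Y'}\lesssim1$. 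These are arranged so that the associated tuple $\vec w$, built from powers of $h_j$ and $h$, lies in $A_{\vec p,(\vec r,s)}$ with constant $\lesssim \mathcal N_{\vec\X}^m$. This is exactly the mechanism behind Theorem \ref{thm:BFS}, which we take as given; in particular it gives that $T$ is bounded on $\X_1\times\cdots\times\X_m\to\X$.

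The key difficulty is that these weights depend on $(f_j)$ and $g$, while compactness needs one fixed weighted space. We would resolve this by a uniformity argument. Apply the Rubio de Francia algorithm to the suprema $\sup_{f\in B_j}|f|$, which is possible if the $B_j$ are first reduced to countable dense subsets dominated in the lattice. Alternatively, use the criterion's conditions (uniform smallness at infinity, uniform smallness of translates and averages $\|f-A_rf\|$) one at a time. Each such condition is a quantity of the form $\|\mathcal{E}(Tf)\|_\X$ with $\mathcal E$ a localization or averaging operator. By Step 1, $\mathcal E\circ T$ tends to $0$ in operator norm on the weighted space uniformly over weights with controlled characteristic. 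Running the extrapolation of Theorem \ref{thm:BFS} for the family $\{(|\mathcal E T\vec f|,|f_1|,\dots,|f_m|)\}$ then transfers the bound $\Phi_\varepsilon(C_0\mathcal N^m)\to0$ to $\X$.

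\emph{Main obstacle.} The hard step is the uniformity in Step 1. We need the approximation $\|\mathcal E_\varepsilon T\|_{L^{p_j}_{w_j}\to L^p_w}\le \Phi_\varepsilon([\vec w])$, with $\Phi_\varepsilon$ non-decreasing and tending to $0$ for each fixed value of the characteristic. Interpolation with a uniform exponent $\theta$ gives $\Phi_\varepsilon\lesssim \|\mathcal E_\varepsilon T\|_{\vec q,\vec v}^{\theta}\,\Phi(\cdot)^{1-\theta}$. Here $\theta$ must depend only on $[\vec w]$, which requires a quantitative openness of $A_{\vec p,(\vec r,s)}$ via reverse Hölder. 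The quasi-Banach case $p<1$ requires interpolation techniques adapted to it, as in \cite{COY}. Once this is in place, the criterion on $\X$ closes the proof.
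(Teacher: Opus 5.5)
Your overall route is the paper's. You extrapolate compactness to weighted spaces via \cite[Theorem 1.1]{COY}, extract tail and oscillation bounds on those spaces that are uniform over the weights the extrapolation produces, and push the families $(\mathbf{1}_{B(0,A)^c}T(\vec f),|f_1|,\dots,|f_m|)$ and $(S^a_\rho T(\vec f),|f_1|,\dots,|f_m|)$ through Theorem \ref{thm:BFS}. You then close with a compactness criterion on $\X$. You correctly identify uniformity in $[\vec w]$ as the crux. The paper obtains \eqref{eq:SN1}--\eqref{eq:SN2} with $A_0$ independent of $\vec w$ from the weighted Kolmogorov--Riesz theorem, pointing to the uniform reverse H\"older property \eqref{eq:compact-scalar-RH} as the source of uniformity. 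Drop the alternative of running the algorithm on $\sup_{f\in B_j}|f|$. Bounded sets of a Banach function space, even countable dense parts of the unit ball of $L^p$, need not be order bounded.

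Two steps need repair.

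(i) In Step 1 you aim for compactness at the given $\vec p$, but $\vec p$ may be an endpoint ($p_j=r_j$, $p_j=\infty$, or $p=s$). Then no admissible exponent lies beyond $\vec p$ on the segment from $\vec q$, and the \cite{COY} extrapolation used in the paper only covers $(\vec r,s)\prec\vec{\p}$. Since Theorem \ref{thm:BFS} accepts any admissible exponent, pass to an interior one, as the paper does: $\p\in(\max\{1,r\},s)$ and $\p_j=\p r_j/r$.

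(ii) The criterion on $\X$ cannot be a Banach-space criterion built on translates or on $\|f-A_rf\|_\X$.
\begin{itemize}
\item $\X=\prod_j\X_j$ is only quasi-Banach. By Lemma \ref{lem:XQ} it is only known to embed in $L^r_{\mathrm{loc}}$, where possibly $r=(\sum_j 1/r_j)^{-1}<1$. So $A_rT(\vec f)$ may be undefined, and Minkowski-type arguments fail.
\item Translations need not be bounded on $\X$ or on weighted spaces.
\item On the weighted side, $A_\rho$ is not bounded on $L^{\p}_w$ when one only knows $w^{\p}\in A_{\p_0}$ with $\p_0>\p$, as in \eqref{eq:wprs-3}.
\end{itemize}
The paper instead proves Lemma \ref{lem:ccc}. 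This is a criterion for complete quasi-Banach lattices with $\X\hookrightarrow L^a(Q)$ and $\mathbf{1}_Q\in\X$, both of which Lemma \ref{lem:XQ} derives from the maximal hypothesis. It is stated with the pointwise $L^a$-oscillation $S^a_r f$, $0<a\le 1$, and proved by dyadic piecewise-constant approximation plus the lattice property. The exponent is taken as $a<\p/\p_0$, so that the weighted Kolmogorov--Riesz theorem applies on $L^{\p}_w$.

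Finally, $S^a_\rho T$ is not multilinear. Your interpolation bound $\|\mathcal{E}_\varepsilon T\|^{\theta}\Phi^{1-\theta}$ therefore applies as written only to the tail operator $\mathbf{1}_{B(0,A)^c}T$. For the oscillation term you must either linearize or obtain \eqref{eq:SN2} directly from the weighted Kolmogorov--Riesz theorem. One way to linearize is to treat $\vec f\mapsto(\tau_yT(\vec f)-T(\vec f))_{|y|<\rho}$ as an $L^a(dy)$-valued multilinear operator.
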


\subsection{Applications of extrapolation theorems}
Next, let us present some applications of Theorems \ref{thm:BFS} and \ref{thm:BFScpt}. We begin with multilinear extrapolation of the boundedness of commutators on Banach function spaces. 

\begin{theorem}\label{thm:Tbbdd}
Let $\vec{r} \in [1, \infty)^m$ and $s \in (1, \infty]$. Let $\X_1, \ldots, \X_m$ be Banach function spaces such that
\begin{align*}
\M_{m \vec{r}, (ms)'}: \X_1^{\frac1m} \times \cdots \times \X_m^{\frac1m}\times \Y' \to L^1(\Rd) \text{ is bounded},
\end{align*} 
where $\Y = \X^{\frac1m}$ and $\X = \prod_{j=1}^m \X_j$. If there exists some $\vec{p} = (p_1, \ldots, p_m)$ with $(\vec{r}, s) \preceq \vec{p}$ such that for all $\vec{w} \in A_{\vec{p}, (\vec{r}, s)}$,
\begin{align*}
\|Tf\|_{L^p_w}
\lesssim \prod_{j=1}^m \|f_j\|_{L^{p_j}_{w_j}}, 
\end{align*}
where $\frac1p = \sum_{j=1}^m \frac{1}{p_j}$ and $w = \prod_{j=1}^m w_j$, then for every $\a \in \N^m$ and $\b \in \BMO^m$, 
\begin{align*}
\|[T, \b]_{\a}(\vec{f})\|_{\X} 
\lesssim \prod_{j=1}^m \|b_j\|_{\BMO}^{\alpha_j} \|f_j\|_{\X_j}.
\end{align*}
\end{theorem}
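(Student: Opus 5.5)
The plan is to reduce Theorem \ref{thm:Tbbdd} to Theorem \ref{thm:BFS}. The intermediate goal is a weighted bound for the commutator on the single weighted Lebesgue space $L^{p_1}_{w_1}\times\cdots\times L^{p_m}_{w_m}\to L^p_w$, uniform over $\vec{w}\in A_{\vec{p},(\vec{r},s)}$ with constant $\Phi([\vec{w}]_{\vec{p},(\vec{r},s)})\prod_j\|b_j\|_{\BMO}^{\alpha_j}$. We then apply Theorem \ref{thm:BFS} to the family
\[
\F=\bigl\{\bigl(|[T,\b]_{\a}(\vec f)|\,\textstyle\prod_j\|b_j\|_{\BMO}^{-\alpha_j},\,|f_1|,\dots,|f_m|\bigr)\bigr\},
\]
with $f_j$ ranging over, say, bounded compactly supported functions (or $f_j\in\X_j\cap L^{p_j}_{w_j}$, followed by a density/Fatou argument). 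This gives the $\X$-bound directly, since the hypothesis on $\M_{m\vec r,(ms)'}$ is exactly $\mathcal N_{\vec\X}<\infty$.

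For the weighted commutator bound I would use the Cauchy integral trick. Write
\[
[T,\b]_{\a}(\vec f)=\partial_{\vec z}^{\a}\,T_{\vec z}(\vec f)\big|_{\vec z=0},\qquad T_{\vec z}(\vec f):=e^{\sum_j z_jb_j}\,T\bigl(e^{-z_1b_1}f_1,\dots,e^{-z_mb_m}f_m\bigr),
\]
and then use the Cauchy formula on a polytorus $|z_j|=\delta_j$, so that
\[
\|[T,\b]_{\a}\vec f\|_{L^p_w}\lesssim \prod_j\delta_j^{-\alpha_j}\sup_{|z_j|=\delta_j}\|T_{\vec z}\vec f\|_{L^p_w}.
\]
Minkowski's inequality is available when $p\ge1$. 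When $p<1$ one uses the $p$-triangle inequality on a discretized integral, or works with the $p$-convexified norm, at the cost of a constant. Now
\[
\|T_{\vec z}\vec f\|_{L^p_w}=\|T(\vec g)\|_{L^p_{\tilde w}},\qquad g_j=e^{-z_jb_j}f_j,\quad \tilde w_j=w_je^{p_j\mathrm{Re}(z_j)b_j},
\]
and the product weight is $\tilde w=\prod_j\tilde w_j=w\,e^{p\,\mathrm{Re}\sum_j z_jb_j}$, which is consistent. Since $\|g_j\|_{L^{p_j}_{\tilde w_j}}=\|f_j\|_{L^{p_j}_{w_j}}$, the hypothesis gives the bound $\Phi([\tilde{\vec w}]_{\vec p,(\vec r,s)})\prod_j\|f_j\|_{L^{p_j}_{w_j}}$.

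The key lemma, and the main obstacle, is a stability statement: if $\vec w\in A_{\vec p,(\vec r,s)}$ and $|\mathrm{Re}\,z_j|\,\|b_j\|_{\BMO}\le\epsilon(\,[\vec w]\,)$, then $[\tilde{\vec w}]_{\vec p,(\vec r,s)}\le C[\vec w]_{\vec p,(\vec r,s)}$. I would prove it via the characterization of $A_{\vec p,(\vec r,s)}$ by linear Muckenhoupt conditions, namely that $w^{\theta}$ lies in a suitable $A_t$ class and each $w_j^{-\theta_j}$ lies in suitable $A_{t_j}$ classes (an LMO-type characterization). Each of these classes is open under multiplication by $e^{\lambda b}$ for $|\lambda|\|b\|_{\BMO}$ small, with the smallness depending on the characteristic. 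This follows from the reverse Hölder inequality together with John--Nirenberg, applied through Hölder's inequality on each average appearing in the $A_{\vec p,(\vec r,s)}$ constant. Choosing $\delta_j=\epsilon/\|b_j\|_{\BMO}$ then yields the weighted estimate with a non-decreasing function $\tilde\Phi(t)=C\,\epsilon(t)^{-|\a|}\Phi(Ct)$. Monotonicity holds because $\epsilon(t)$ can be taken non-increasing.

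Finally, Theorem \ref{thm:BFS} is applied with $\tilde\Phi$ in place of $\Phi$, giving
\[
\|[T,\b]_{\a}\vec f\|_{\X}\le 2^{2m+1}\tilde\Phi(C_0\mathcal N_{\vec\X}^m)\prod_j\|b_j\|_{\BMO}^{\alpha_j}\|f_j\|_{\X_j}.
\]
The remaining technical care lies in making the expression $T(e^{-z_jb_j}f_j)$ well defined and holomorphic in $\vec z$. For this I would first take $b_j$ bounded and then pass to general $\BMO$ functions by truncation and Fatou's lemma in $\X$.
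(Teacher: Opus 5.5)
Your overall strategy matches the paper's. The paper proves Theorem~\ref{thm:Tbbdd} in one line: it takes a quantitative weighted estimate for $[T,\b]_{\a}$ from \cite[Theorem 2.22]{LMO} and feeds it into Theorem~\ref{thm:BFS}. The gap is in how you produce that weighted estimate.

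You run the Cauchy-integral argument at the \emph{given} exponent $\vec p$. The statement only assumes $(\vec r,s)\preceq\vec p$, so $p_j=r_j$ for some $j$, or $p=s$, is allowed, and in those cases your stability lemma is false.

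\begin{itemize}
\item At such endpoints, $[\vec w]_{A_{\vec p,(\vec r,s)}}$ contains $\esssup_Q w_j^{-1}$ or $\esssup_Q w$. No smallness of $\mathrm{Re}\,z_j$ keeps these finite after multiplying by $e^{\mathrm{Re}(z_j)b_j}$ with $b_j$ unbounded.
\item For instance, take $\vec w=(1,\dots,1)$, $p_1=r_1$, $b_1=\log|x|$. For $\mathrm{Re}\,z_1>0$ one gets $\esssup_Q\tilde w_1^{-1}=\esssup_Q|x|^{-\mathrm{Re}\,z_1}=\infty$ on every cube containing $0$. You cannot avoid this sign, since $z_1$ runs over a full circle.
\item Your reverse H\"older plus John--Nirenberg mechanism needs the exponents $(1/r_j-1/p_j)^{-1}$ and $(1/p-1/s)^{-1}$ to be finite, which they are not here.
\item Worse, the intermediate goal itself can fail at an endpoint. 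Take $m=d=1$, $r_1=p_1=1$, $s=\infty$, so $A_{1,(1,\infty)}=A_1$. The operator $Tf=\langle f\rangle_{[0,1]}\mathbf 1_{[0,1]}$ satisfies $\|Tf\|_{L^1_w}\le [w]_{A_1}\|f\|_{L^1_w}$. Yet for $f_\epsilon=\epsilon^{-1}\mathbf 1_{[0,\epsilon]}$ one computes $[T,\log|x|]f_\epsilon=(\log x-\log\epsilon+1)\mathbf 1_{[0,1]}$, whose $L^1$ norm is at least $|\log\epsilon|$. So there is no weighted commutator bound at this $\vec p$, even with $w\equiv1$.
\end{itemize}

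The repair is to change exponents before differentiating.
\begin{itemize}
\item By the multilinear extrapolation theorem of \cite{LMO} (quantitatively, \cite{Nie19}), the hypothesis gives a bound with a non-decreasing $\Phi_1([\vec w]_{\vec q,(\vec r,s)})$ at an interior exponent $\vec q$. For example, take $\max\{1,r\}<q<s$ and $q_j=qr_j/r$ with $\frac1r=\sum_j\frac1{r_j}$, exactly the choice made in Section~\ref{sec:cpt}.
\item At such $\vec q$ all exponents are finite and your stability lemma is correct. The factors $e^{\pm\mathrm{Re}(z_j)(b_j)_Q}$ cancel between the $w$-average and the $w_j$-averages.
\item Since $q\ge1$, Minkowski's inequality applies. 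This also removes your $p<1$ step, which does not work as written: the $p$-triangle inequality over $N$ Riemann-sum nodes costs a factor $N^{1/p-1}\to\infty$, and you would need subharmonicity of $|F|^p$ instead.
\item Then apply Theorem~\ref{thm:BFS} with $\vec q$ in place of $\vec p$. This is legitimate because $\mathcal N_{\vec\X}$ does not depend on the exponent.
\end{itemize}

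A minor bookkeeping point: with the paper's convention $\|f\|_{L^p_w}=\|fw\|_{L^p}$, the modified weights are $\tilde w_j=w_je^{\mathrm{Re}(z_j)b_j}$, with no factor $p_j$. As written, your displayed product $\prod_j\tilde w_j$ does not equal $w\,e^{p\,\mathrm{Re}\sum_j z_jb_j}$, though this is harmless.
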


The second application concerns multilinear extrapolation of the compactness of commutators on Banach function spaces. 

\begin{theorem}\label{thm:Tbcpt}
Let $\vec{r} \in [1, \infty)^m$ and $s \in (1, \infty]$. Let $\a \in \N^m$ and $\b \in \CMO^m$. Let $\X_1, \ldots, \X_m$ be Banach function spaces such that 
\begin{align*}
\M_{m \vec{r}, (ms)'}: \X_1^{\frac1m} \times \cdots \times \X_m^{\frac1m}\times \Y' \to L^1(\Rd) \text{ is bounded},
\end{align*} 
where $\Y = \X^{\frac1m}$ and $\X = \prod_{j=1}^m \X_j$. If $T$ is an $m$-linear operator such that  
\begin{enumerate}
\item $T$ is bounded from $L^{p_1}_{u_1} \times \cdots \times L^{p_m}_{u_m}$ to $L^p_u$ for some $\vec{p} = (p_1, \ldots, p_m)$ with $(\vec{r}, s) \preceq \vec{p}$, and for all $\vec{u} \in A_{\vec{p}, (\vec{r}, s)}$, where $\frac1p = \sum_{j=1}^m \frac{1}{p_j} > 0$ and $u = \prod_{j=1}^m u_j$; 

\item $[T, \b]_{\a}$ is compact from $L^{q_1}(\Rd) \times \cdots \times L^{q_m}(\Rd)$ to $L^q(\Rd)$ for some $\vec{q} = (q_1, \ldots, q_m)$ with $(\vec{r}, s) \preceq \vec{q}$, where $\frac1q = \sum_{j=1}^m \frac{1}{q_j}$;
\end{enumerate}
then $[T, \b]_{\a}: \X_1 \times \cdots \times \X_m \longrightarrow \X$ is compact.
\end{theorem}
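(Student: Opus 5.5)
We would deduce Theorem \ref{thm:Tbcpt} from the compact extrapolation Theorem \ref{thm:BFScpt}, applied to the $m$-linear operator $[T,\b]_{\a}$ in place of $T$, with the unweighted choice $\vec{v} = (1,\ldots,1)$. This choice is admissible: $(1,\ldots,1) \in A_{\vec{q}, (\vec{r}, s)}$ with constant $1$, so hypothesis (2) of Theorem \ref{thm:Tbcpt} is exactly hypothesis (2) of Theorem \ref{thm:BFScpt} for $[T,\b]_{\a}$. It therefore suffices to verify hypothesis (1) of Theorem \ref{thm:BFScpt} for the commutator: we need $[T,\b]_{\a}$ bounded from $L^{p_1}_{u_1} \times \cdots \times L^{p_m}_{u_m}$ to $L^p_u$ for all $\vec{u} \in A_{\vec{p}, (\vec{r}, s)}$, with the same $\vec{p}$ as in hypothesis (1) for $T$.

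The weighted boundedness of the commutator will come from the standard Cauchy integral (Coifman--Rochberg--Weiss) trick. Since $\CMO \subset \BMO$, it is the same input used for Theorem \ref{thm:Tbbdd}. The steps are as follows.
\begin{enumerate}
\item For $z=(z_1,\ldots,z_m) \in \C^m$, set
\[
T_z(\vec f) = e^{\sum_j z_j b_j}\, T\bigl(e^{-z_1 b_1} f_1, \ldots, e^{-z_m b_m} f_m\bigr).
\]
Then $[T,\b]_{\a}(\vec f)$ is, up to the constant $\a!$, the mixed derivative $\partial_z^{\a} T_z(\vec f)|_{z=0}$.
\item By the multivariable Cauchy formula on a polytorus $|z_j| = \delta_j$, we have
\[
\|[T,\b]_{\a}(\vec f)\|_{L^p_u} \lesssim \sup_{|z_j|=\delta_j} \|T_z(\vec f)\|_{L^p_u}\, \prod_j \delta_j^{-\alpha_j}.
\]
When $p<1$ we use the $p$-triangle inequality, or first pass to a $p$-convexified version via a Riemann sum of the integral.
\item We have $\|T_z(\vec f)\|_{L^p_u} = \|T(\vec g)\|_{L^p_{u e^{p\,\mathrm{Re}\sum z_j b_j}}}$ with $g_j = e^{-z_j b_j} f_j$. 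So it suffices that the perturbed tuple $\vec u^{z} := (u_j e^{p_j \mathrm{Re}(z_j) b_j})_j$ stays in $A_{\vec{p}, (\vec{r}, s)}$ with a controlled constant whenever $|z_j| \le \delta_j \simeq \|b_j\|_{\BMO}^{-1}$.
\item This stability is the main technical point. Following Li--Martell--Ombrosi, we use the characterization of $A_{\vec{p}, (\vec{r}, s)}$ through scalar $A_{t}$ conditions (a weight $u^{\theta}$ and the weights $u_j^{-\theta_j}$ lying in suitable $A_{t}$ classes) together with the reverse H\"older/openness property. We then apply the John--Nirenberg fact that $e^{\epsilon b}$ belongs to $A_2$ with a bounded constant, and its powers preserve $A_t$ classes under small perturbation, for $|\epsilon| \lesssim \|b\|_{\BMO}^{-1}$ after applying H\"older. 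This yields $[\vec u^{z}]_{\vec{p}, (\vec{r}, s)} \le C([\vec u])$ and hence hypothesis (1) for the commutator.
\end{enumerate}

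Having verified both hypotheses, Theorem \ref{thm:BFScpt} applied to $[T,\b]_{\a}$ gives compactness from $\X_1 \times \cdots \times \X_m$ to $\X$. We expect step 4 to be the main obstacle. The difficulty is tracking how the exponential perturbation interacts with the multilinear class when some $r_j>1$ or $s<\infty$. There we would first reduce, via the rescaling $u_j \mapsto u_j^{r_j}$-type identities, to the case $\vec{r} = (1,\ldots,1)$, $s = \infty$, where the characterization is cleanest. If that reduction is awkward, we would instead invoke the boundedness already proved in Theorem \ref{thm:Tbbdd}, whose proof must contain exactly this weighted commutator estimate.
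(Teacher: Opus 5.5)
Your reduction is the same as the paper's: both apply Theorem~\ref{thm:BFScpt} to the $m$-linear operator $[T,\b]_{\a}$. The difference is in how its two hypotheses are supplied.

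\emph{The paper's route.} It invokes the diagonal case of \cite[Theorem 1.3]{COY}. This gives compactness of $[T,\b]_{\a}$ from $L^{p_1}_{w_1}\times\cdots\times L^{p_m}_{w_m}$ to $L^p_w$ for every $\vec p$ with $(\vec r,s)\prec\vec p$ and every $\vec w\in A_{\vec p,(\vec r,s)}$. Since compact operators are bounded, this yields hypotheses (1) and (2) at once.

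\emph{Your route.} You note that $\vec v=(1,\ldots,1)$ has characteristic $1$, so the unweighted compactness is already hypothesis (2). You then produce hypothesis (1) yourself by the Cauchy integral trick, i.e.\ the weighted commutator bound of \cite[Theorem 2.22]{LMO}, which is the input the paper uses for Theorem~\ref{thm:Tbbdd}. This is slightly more economical: no compact extrapolation for commutators is needed, because the proof of Theorem~\ref{thm:BFScpt} already extrapolates compactness via \cite[Theorem 1.1]{COY}. The paper's version is shorter, since the conclusion of \cite[Theorem 1.3]{COY} already contains the weighted commutator bound.

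\emph{A step that needs correcting.} You cannot run steps 3--4 ``with the same $\vec p$'' as in hypothesis (1) when that $\vec p$ lies on the boundary, i.e.\ $p_j=r_j$ for some $j$ or $p=s$. There the characteristic involves $\esssup_Q w_j^{-1}$ or $\esssup_Q w$, and these are not stable under multiplication by $e^{\epsilon b}$ with $b\in\BMO$ unbounded. For instance, when $m=1$, $r_1=1$, $s=\infty$, $p=1$, the class is $A_1$, and $e^{\epsilon\log|x|}=|x|^{\epsilon}\notin A_1$ for every $\epsilon>0$. So the claim $[\vec u^{z}]\le C([\vec u])$ in step 4 is false at such exponents.

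\emph{The fix.} First extrapolate the hypothesis on $T$ via \cite{LMO} to an interior exponent. For example, take $p_j=\mathfrak p r_j/r$ with $\max\{1,r\}<\mathfrak p<s$, as in the proof of Theorem~\ref{thm:BFScpt}. Run the Cauchy integral argument there, and feed that exponent into hypothesis (1) of Theorem~\ref{thm:BFScpt}, which only asks for some $\vec p$ with $(\vec r,s)\preceq\vec p$.

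Choosing $p>1$ also settles step 2. For $p<1$, the $p$-triangle inequality applied to an $N$-point Riemann sum of the Cauchy integral only gives a bound of order $N^{1/p-1}$, so that step does not close as written. Subharmonicity of $|F|^p$ for holomorphic $F$ would be the alternative if you insisted on $p<1$.
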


The next application  is devoted to multilinear operators with sparse structure, which encompasses a very wide range of practical applications. 

\begin{theorem}\label{thm:spBFS}
Let $\vec{r} \in [1, \infty)^m$ and $s \in (1, \infty]$ with $\frac1s < \sum_{j=1}^m \frac{1}{r_j}$. Let $\X_1, \ldots, \X_m$ be Banach function spaces such that 
\begin{align*}
\M_{m \vec{r}, (ms)'}: \X_1^{\frac1m} \times \cdots \times \X_m^{\frac1m}\times \Y' \to L^1(\Rd) \text{ is bounded},
\end{align*} 
where $\Y = \X^{\frac1m}$ and $\X = \prod_{j=1}^m \X_j$. If an $m$-linear operator $T$ satisfies 
\begin{align*}
|\langle T(\vec{f}), \, f_{m+1} \rangle|
\lesssim \Lambda_{\vec{r}, s'}^*(\vec{f}, f_{m+1}),
\end{align*}
for all $f_1, \ldots, f_m \in L_c^{\infty}(\Rd)$, then both $T$ and $[T, \b]_{\a}$ are bounded from $\X_1 \times \cdots \times \X_m$ to $\X$ for all $\a \in \N^m$ and $\b \in \BMO^m$.

In particular, the above conclusion holds whenever $T$ is one of the following operators:
\begin{itemize}

\item The full-range case $\vec{r} = (1, \ldots, 1)$ and $s = \infty$:
\begin{enumerate}
\item[---] Multilinear $\w$-Calder\'{o}n--Zygmund operators $T$ (cf. Theorem \ref{thm:CZO});

\item[---] Multilinear pseudo-differential operators $T_{\sigma}$ (cf. Theorem \ref{thm:pse}).
\end{enumerate}

\item The limited-range case $\vec{r} \in [1, \infty)^m$ and $s \in (1, \infty]$:
\begin{enumerate}
\item[---] Bilinear Hilbert transform $BH$ (cf. Theorem \ref{thm:BH});

\item[---] Multilinear multi-scale operators:
\begin{enumerate}
\item[---] Multilinear rough singular integrals $T_{\Omega}$ (cf. Theorem \ref{thm:rough}); 

\item[---] Bilinear pseudo-differential operators $T_{\sigma}$ (cf. Theorem \ref{thm:psee});

\item[---] Bilinear oscillatory Fourier multiplier $T_{\m_{a, b}^c}$ (cf. Theorem \ref{thm:osc}).
\end{enumerate}

\end{enumerate}
\end{itemize}
\end{theorem}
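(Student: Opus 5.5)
The plan is to reduce Theorem \ref{thm:spBFS} to Theorems \ref{thm:BFS} and \ref{thm:Tbbdd} by means of weighted estimates for sparse forms. The key input is the known weighted bound for the sparse form $\Lambda_{\vec{r}, s'}^*$. Under the standing assumption $\frac1s < \sum_{j=1}^m \frac1{r_j}$, for every $\vec{p}$ with $(\vec{r}, s) \preceq \vec{p}$ and every $\vec{w} \in A_{\vec{p}, (\vec{r}, s)}$ it gives
\begin{align*}
\Lambda_{\vec{r}, s'}^*(\vec{f}, g) \lesssim \Phi([\vec{w}]_{\vec{p}, (\vec{r}, s)}) \prod_{j=1}^m \|f_j\|_{L^{p_j}_{w_j}} \|g\|_{L^{p'}_{w^{1-p'}}}.
\end{align*}
This holds when $p > 1$, where $\Phi$ is a power function, as in Li--Martell--Ombrosi or Nieraeth. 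I would first fix one convenient $\vec{p}$ with $(\vec{r}, s) \preceq \vec{p}$ and $p > 1$. Such a choice is possible because the admissible range is the open set $r_j < p_j$ with $\frac1p > \frac1s$, and we may take $p_j$ close to $r_j$ but large enough that $\frac1p < 1$; if the range does not permit $p>1$, I would use $p_j$ large.

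With this $\vec{p}$ fixed, duality in $L^p_w$ gives
\begin{align*}
\|T\vec{f}\|_{L^p_w} = \sup_{\|g\|_{L^{p'}_{w^{1-p'}}}\le 1} |\langle T\vec{f}, g\rangle|
\end{align*}
for $f_j \in L^\infty_c$. The sparse domination hypothesis then yields $\|T\vec{f}\|_{L^p_w} \le \Phi([\vec{w}]) \prod_j \|f_j\|_{L^{p_j}_{w_j}}$ for all $\vec{w} \in A_{\vec{p}, (\vec{r}, s)}$. Now apply Theorem \ref{thm:BFS} to the family $\F = \{(|T\vec{f}|, |f_1|, \ldots, |f_m|) : f_j \in L^\infty_c\}$. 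Because $\M_{m\vec{r},(ms)'}$ is bounded, this gives $\|T\vec{f}\|_{\X} \lesssim \prod_j \|f_j\|_{\X_j}$ on $L^\infty_c$.

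To pass to general $f_j \in \X_j$, I would use density of $L^\infty_c$ when it holds. In general I would instead define $T$ on $\X_j$ via truncations $f_j\mathbf{1}_{\{|f_j|\le N\}\cap B(0,N)}$ together with the Fatou property of $\X$. For the commutators, Theorem \ref{thm:Tbbdd} applies directly, since its hypothesis is exactly the weighted bound just obtained, so $[T, \b]_{\a}$ is bounded for every $\a \in \N^m$ and $\b \in \BMO^m$.

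The particular operators in the list are handled by citing, from the later sections (Theorems \ref{thm:CZO}, \ref{thm:pse}, \ref{thm:BH}, \ref{thm:rough}, \ref{thm:psee}, \ref{thm:osc}), the sparse domination each of them satisfies with the appropriate $(\vec{r}, s)$, and checking that $\frac1s < \sum_j \frac1{r_j}$ holds in each case. For example, $\vec{r}=(1,\ldots,1)$ and $s=\infty$ for Calder\'on--Zygmund and pseudo-differential operators, and the Culiuc--Di Plinio--Ou exponents for $BH$.

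I expect the main obstacle to be the weighted sparse bound in the full class $A_{\vec{p}, (\vec{r}, s)}$. One must verify that the $\ell^{s'}$-type output average together with the $r_j$-averages is controlled by the characteristic for some admissible $p > 1$. Here I would try the standard route: pass to the multilinear maximal function $\M_{\vec{r},s'}$ and use stopping cubes, with the condition $\frac1s<\sum_j \frac1{r_j}$ guaranteeing that the class is nonempty and the estimate closes.
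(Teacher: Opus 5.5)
Your proposal follows the paper's proof: the paper gets Theorem \ref{thm:spBFS} by combining the weighted sparse bound with Theorem \ref{thm:BFS} for $T$ and with Theorem \ref{thm:Tbbdd} for $[T,\b]_{\a}$, then invoking the known sparse dominations for the listed operators; the sparse bound is stated as Theorem \ref{thm:form} and simply cited from Li--Martell--Ombrosi and Nieraeth, so the ``obstacle'' in your last paragraph need not be re-proved. Two small slips: with the paper's convention $\|g\|_{L^{p'}_{w^{-1}}}=\|gw^{-1}\|_{L^{p'}}$ the dual of $L^p_w$ is $L^{p'}_{w^{-1}}$, not $L^{p'}_{w^{1-p'}}$; and an exponent with $(\vec{r},s)\prec\vec{p}$ and $p>1$ always exists (take $\frac1p\in(\frac1s,\min\{1,\sum_j\frac1{r_j}\})$ and $p_j=pr_j/r$ with $\frac1r=\sum_j\frac1{r_j}$), so your fallback of taking the $p_j$ large is unnecessary and would in fact violate $p<s$ when $s<\infty$.
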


Let us turn to the compactness on Banach function spaces. We first focuses on the compactness of commutators. 

\begin{theorem}\label{thm:TbBTS}
Let $\vec{r} \in [1, \infty)^m$ and $s \in (1, \infty]$. Let $\X_1, \ldots, \X_m$ be Banach function spaces such that 
\begin{align*}
\M_{m \vec{r}, (ms)'}: \X_1^{\frac1m} \times \cdots \times \X_m^{\frac1m}\times \Y' \to L^1(\Rd) \text{ is bounded},
\end{align*} 
where $\Y = \X^{\frac1m}$ and $\X = \prod_{j=1}^m \X_j$. Then for every $\a \in \N^m \setminus \{0\}^m$ and $\b \in \CMO^m$, $[T, \b]_{\a}: \X_1 \times \cdots \times \X_m \longrightarrow \X$ is compact whenever $T$ is one of the following operators:
\begin{itemize}

\item The full-range case $\vec{r} = (1, \ldots, 1)$ and $s = \infty$:
\begin{enumerate}
\item[---] Multilinear $\w$-Calder\'{o}n--Zygmund operators $T$ (cf. Theorem \ref{thm:CZOcpt});

\item[---] Multilinear pseudo-differential operator $T_{\sigma}$ (cf. Theorem \ref{thm:CZOcpt});

\item[---] Bilinear Bochner-Riesz means $\B^{\delta}$ (cf. Theorem \ref{thm:CZOcpt});

\end{enumerate}

\item The limited-range case $\vec{r} \in [1, \infty)^m$ and $s \in (1, \infty]$:
\begin{enumerate}
\item[---] Multilinear rough singular integrals $T_{\Omega}$ (cf. Theorem \ref{thm:roughcpt}); 

\item[---] Multilinear Fourier multiplier $T_{\m}$ (cf. Theorem \ref{thm:FIO}).
\end{enumerate}
\end{itemize}
\end{theorem}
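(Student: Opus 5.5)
The plan is to obtain Theorem \ref{thm:TbBTS} as a direct consequence of Theorem \ref{thm:Tbcpt}. For each operator in the list, and for the appropriate $(\vec{r}, s)$, we verify its two hypotheses. Theorem \ref{thm:Tbcpt} then gives the compactness of $[T, \b]_{\a}$ from $\X_1 \times \cdots \times \X_m$ to $\X$ under the standing assumption that $\M_{m\vec{r},(ms)'}$ is bounded from $\X_1^{\frac1m} \times \cdots \times \X_m^{\frac1m} \times \Y'$ to $L^1$. The argument therefore splits into two parts for each operator: a weighted boundedness statement for $T$, and an unweighted compactness statement for $[T,\b]_{\a}$ at one exponent $\vec{q}$ with $(\vec{r},s) \preceq \vec{q}$.

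\textbf{Hypothesis (1).} In the full-range case $\vec{r}=(1,\ldots,1)$, $s=\infty$, multilinear $\w$-Calder\'on--Zygmund operators and pseudo-differential operators satisfy the sparse domination $|\langle T(\vec f), f_{m+1}\rangle| \lesssim \Lambda^*_{\vec{r},s'}(\vec f, f_{m+1})$ used in Theorem \ref{thm:spBFS}. Sparse bounds of this type yield $L^{p_1}_{u_1}\times\cdots\times L^{p_m}_{u_m}\to L^p_u$ for every $\vec u \in A_{\vec p,(\vec r,s)}$ and some (indeed every) admissible $\vec{p}$. The same reasoning applies to multilinear rough singular integrals $T_\Omega$ in the limited-range case, with $\vec r$ close to $1$ and $s$ large depending on the integrability of $\Omega$, and similarly to the multilinear Fourier multipliers $T_{\m}$. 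For the bilinear Bochner--Riesz means $\B^\delta$ (with $\delta$ large enough to lie in the full-range regime), we will quote the known $A_{\vec p}$-weighted bounds, or the sparse bounds, from the literature.

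\textbf{Hypothesis (2).} Since $\b\in\CMO^m$ and $\a\neq 0$, we need the unweighted compactness of $[T,\b]_{\a}$ from $L^{q_1}\times\cdots\times L^{q_m}$ to $L^q$ for one convenient $\vec q$, for example $q_j$ all large so that $q>1$ and $(\vec r,s)\preceq\vec q$. For Calder\'on--Zygmund and pseudo-differential operators this is known and can be cited. Otherwise we argue through the Kolmogorov--Riesz criterion: by the boundedness of commutators with $\BMO$ symbols (Theorem \ref{thm:Tbbdd} with $\X_j=L^{q_j}$), we may approximate $b_j$ in $\BMO$ by $C_c^\infty$ functions. We then truncate the kernel near the diagonal and at infinity, and check uniform boundedness, uniform decay at infinity, and equicontinuity.

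\textbf{Main obstacle.} I expect the limited-range operators, $T_\Omega$ and $T_{\m}$, to be the hardest part, since their kernels are not smooth. There I would try first to decompose $T=\sum_k T_k$ via a Littlewood--Paley or dyadic-annuli decomposition of $\Omega$ or $\m$. Each piece $T_k$ has a smooth kernel, so $[T_k,\b]_{\a}$ is compact by the Calder\'on--Zygmund case. The commutators $[T_k,\b]_{\a}$ also have geometrically decaying $L^{\vec q}\to L^q$ norms, so $[T,\b]_{\a}$ is a norm limit of compact operators and is therefore compact. With both hypotheses in hand, Theorem \ref{thm:Tbcpt} concludes the proof.
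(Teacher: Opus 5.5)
Your overall plan matches the paper's. Theorem \ref{thm:TbBTS} is obtained operator by operator from Theorem \ref{thm:Tbcpt}, by supplying a bound for $T$ on all of $A_{\vec p,(\vec r,s)}$ and an unweighted compactness statement for the commutator. The paper does this through Theorems \ref{thm:CZOcpt}, \ref{thm:roughcpt}, \ref{thm:FIO}, quoting \cite{COY, CIRXY} for the full-range inputs. Getting hypothesis (1) from sparse domination plus Theorem \ref{thm:form}, instead of quoting weighted bounds, is harmless.

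\textbf{The gap.} It lies in the one place where you replace a citation by an argument: the limited-range compactness.
\begin{itemize}
\item A Littlewood--Paley or dyadic-annuli splitting of $\m$ gives pieces that are, up to dilation, the operators $T_{\m_j}$ with $\m_j=\Phi\,\m(2^j\cdot)$. These are uniformly bounded, since $\sup_j\|\m_j\|_{W^s}<\infty$, but nothing makes them decay in $j$; for homogeneous $\m$ they are all the same operator.
\item A splitting of $K_\Omega$ into spatial annuli has the same defect: its pieces are exact dilates of one another, and they are not smooth either.
\end{itemize}
So you do not get a norm-convergent series of compact operators.

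For $T_\Omega$, decay only appears after the Duoandikoetxea--Rubio de Francia regrouping. There the pieces are indexed by the ratio between the spatial scale of the kernel and the frequency scale of a Littlewood--Paley cut-off, and the decay comes from the Fourier decay estimates of \cite{DS, GHH}. Even then, decay of $\|T_k\|$ does not by itself give decay of $\|[T_k,\b]_{\a}\|$. You must interpolate the unweighted decay against weighted bounds for $T_k$ that grow at most slowly in $k$, and then apply the Cauchy integral trick; the paper delegates exactly this to \cite[Lemma 2.11]{COY}.

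For $T_{\m}$ the paper does not decompose at all. It takes the weighted compactness of $[T_{\m},\b]_{e_j}$ directly from \cite[eq.~(5.19), Theorem 5.9]{COY}.

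\textbf{Smaller points.}
\begin{itemize}
\item If $s<\infty$ you cannot take all $q_j$ large, because $(\vec r,s)\preceq\vec q$ forces $q\le s$. Instead take $q_j=\mathfrak{p}\,r_j/r$ with $\frac1r=\sum_j\frac1{r_j}$ and $\max\{1,r\}<\mathfrak{p}\le s$, as in the proof of Theorem \ref{thm:BFScpt}.
\item For $\Omega\in L^q$ with $q<\infty$, the region $\mathcal{H}^d_{m+1}(q)$ keeps $1/r_j$ and $1/s'$ away from $1$, which is the opposite of ``$\vec r$ close to $1$ and $s$ large''. In any case $(\vec r,s)$ is fixed by the maximal-operator hypothesis, not chosen by you.
\item The results the paper invokes (Theorems \ref{thm:CZOcpt}, \ref{thm:roughcpt}, \ref{thm:FIO}) are stated only for first-order commutators $[T,\b]_{e_j}$. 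You, like the statement, claim every $\a\neq0$. To cover this, induct on $|\a|$:
  \begin{itemize}
  \item if $S=[T,\b]_{\a-e_j}$ is compact, then $[S,\phi]_{e_j}$ is compact for $\phi\in\mathscr{C}_c^\infty$;
  \item $\|[S,b_j-\phi]_{e_j}\|\lesssim\|b_j-\phi\|_{\BMO}$ by the multi-symbol $\BMO$ commutator bounds behind Theorem \ref{thm:Tbbdd};
  \item hence $[T,\b]_{\a}$ is a norm limit of compact operators, at the exponent $\vec q$ needed in Theorem \ref{thm:Tbcpt}.
  \end{itemize}
\end{itemize}
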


Finally, we deal with the compactness of a class of multilinear Calder\'{o}n--Zygmund operators on Banach function spaces. 

\begin{theorem}\label{thm:Ccpt}
Let $\X_1, \ldots, \X_m$ be Banach function spaces such that 
\begin{align*}
\M_{(m, \ldots, m), 1}: \X_1^{\frac1m} \times \cdots \times \X_m^{\frac1m}\times \Y' \to L^1(\Rd) \text{ is bounded},
\end{align*} 
where $\Y = \X^{\frac1m}$ and $\X = \prod_{j=1}^m \X_j$. If $T$ is an $m$-linear Calder\'{o}n--Zygmund operators satisfying the hypotheses $\mathrm{\eqref{H1}}$--$\mathrm{\eqref{H3}}$, then $T: \X_1 \times \cdots \times \X_m \longrightarrow \X$ is compact.

In particular, the above conclusion holds whenever $T$ is one of the following operators:
\begin{itemize}
\item Bilinear continuous paraproducts $\Pi_b$ (cf. Theorem \ref{thm:CLSY}); 

\item Bilinear dyadic paraproducts $\Pi_{\D}$ (cf. Theorem \ref{thm:CLSY});

\item Bilinear pseudo-differential operator $T_{\sigma}$ (cf. Theorem \ref{thm:CLSY});

\item Bilinear commutators $[b, T]_{e_j}$ (cf. Theorem \ref{thm:CLSY}).
\end{itemize}

\end{theorem}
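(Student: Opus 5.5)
The plan is to reduce Theorem \ref{thm:Ccpt} to the extrapolation of compactness in Theorem \ref{thm:BFScpt}, applied with the full-range parameters $\vec{r} = (1, \ldots, 1)$ and $s = \infty$. With these choices $m\vec{r} = (m, \ldots, m)$ and $(ms)' = 1$, so the hypothesis on $\M_{(m,\ldots,m),1}$ in Theorem \ref{thm:Ccpt} is exactly the maximal hypothesis of Theorem \ref{thm:BFScpt}. Moreover $A_{\vec{p},(\vec{1},\infty)} = A_{\vec{p}}$, the classes of Lerner et al., and the admissibility condition $(\vec{1}, \infty) \preceq \vec{p}$ just means $p_j \in [1,\infty]$ with $\frac1p = \sum_j \frac1{p_j} > 0$.

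It remains to verify hypotheses (1) and (2) of Theorem \ref{thm:BFScpt} for an $m$-linear Calder\'on--Zygmund operator $T$ satisfying \eqref{H1}--\eqref{H3}.

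For (1), I will use the standard weighted theory: an $m$-linear CZO is bounded from $L^{p_1}_{u_1} \times \cdots \times L^{p_m}_{u_m}$ to $L^p_u$ for all $p_j \in (1,\infty)$ and all $\vec{u} \in A_{\vec{p}}$. One route is the sparse domination $|\langle T(\vec f), f_{m+1}\rangle| \lesssim \Lambda^*_{\vec{1},1}(\vec f, f_{m+1})$ used in Theorem \ref{thm:spBFS}, together with the weighted bounds for sparse forms. I will fix one convenient exponent, say $p_1 = \cdots = p_m = 2$, so that $p = 2/m$.

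For (2), I need compactness at a single exponent and a single weight in $A_{\vec{q}}$. I will take unweighted $L^{q_1} \times \cdots \times L^{q_m} \to L^q$ with $v_j \equiv 1$, which trivially lies in $A_{\vec{q}}$. This is precisely what the hypotheses \eqref{H1}--\eqref{H3} are designed to give: they are the compact $T1$-type conditions, namely a kernel with vanishing Calder\'on--Zygmund constants at zero, infinity and far away, a weak compactness property, and $T^{*,j}(1,\ldots,1) \in \CMO$. The unweighted compactness result for such operators is the one stated as Theorem \ref{thm:CLSY}. Invoking it at some $\vec{q} \in (1,\infty)^m$ gives (2), and Theorem \ref{thm:BFScpt} then yields compactness $\X_1 \times \cdots \times \X_m \to \X$.

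For the listed examples (continuous and dyadic paraproducts, bilinear pseudo-differential operators, bilinear commutators $[b,T]_{e_j}$), the task is only to cite that each satisfies \eqref{H1}--\eqref{H3}, again through Theorem \ref{thm:CLSY}, under its stated symbol or $\CMO$ assumptions.

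The main obstacle is hypothesis (2). Unweighted compactness of CZOs under \eqref{H1}--\eqref{H3} is known in the literature in the Banach range, but it may only be available at one exponent tuple, possibly with $q<1$ issues. I would first choose $\vec{q}$ with $q>1$ to stay in the Banach range; Theorem \ref{thm:BFScpt} needs only one $\vec{q}$, so this suffices. A secondary check is that the sparse bound giving (1) holds for $\omega$-Dini kernels under \eqref{H1}, which is standard.
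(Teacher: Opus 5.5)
Your proposal follows the paper's proof. The paper applies Theorem \ref{thm:BFScpt} with $\vec{r}=(1,\ldots,1)$ and $s=\infty$, takes both hypotheses (1) and (2) from the weighted compactness of \cite{CLSY} recorded in \eqref{eq:clsy-1}, and uses \eqref{eq:clsy-2} for the listed examples. Since a compact operator is bounded, \eqref{eq:clsy-1} already gives the weighted boundedness, so your separate sparse-domination route to (1) is harmless but not needed.

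One correction to your citations: the compactness input must be \eqref{eq:clsy-1}, i.e.\ the result of \cite{CLSY}, not Theorem \ref{thm:CLSY}. In the paper, Theorem \ref{thm:CLSY} is the Banach-function-space conclusion for the specific examples and is itself obtained by this same argument. Citing it would therefore be circular, and it would not cover general operators satisfying \eqref{H1}--\eqref{H3}.
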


The rest of the paper is organized as follows. Section \ref{sec:pre} contains some preliminaries including notation, definitions, and some basic estimates. Section \ref{sec:bdd} is devoted to showing Theorem \ref{thm:BFS}. Then  based on Theorem \ref{thm:BFS}, the proof of Theorem \ref{thm:BFScpt} is presented in Section \ref{sec:cpt}. Finally, we sequentially give the proof of Theorems \ref{thm:Tbbdd}--\ref{thm:Ccpt} in Section \ref{sec:app}.

\section{Preliminaries}\label{sec:pre}

\subsection{Notation}\label{sec:notation}
Let us introduce some notation that will be used throughout this article.
\begin{itemize}
\item For convenience, we use the $\ell^{\infty}$ metric on $\Rd$.

\item Let $\N :=\{0, 1, 2, \ldots\}$ be the set of natural numbers. 

\item Let $\lfloor a \rfloor$ denote the largest integer not greater than $a$.

\item Let $\vec{v} = (v_1, \ldots, v_m)$ denote the vector of components $v_1, \ldots, v_m$.

\item Let $\|f\|_{L_w^p} := \|fw\|_{L^p}$ and $\|f\|_{L^p(w)} := \|f w^{\frac1p}\|_{L^p}$ for each $p \in (0, \infty]$ and weight $w$.

\item The translation operator $\tau_v$ is defined by $\tau_v f (x) := f(x+v)$. 

\item Let $\mathscr{S}(\Rd)$ denote the class of Schwartz functions on $\Rd$. 

\item Let $\mathscr{C}^k(\Rd)$ be the space of $k$-times continuously differentiable functions on $\Rd$. 

\item Let $\mathscr{C}_c^{\infty}(\Rd)$ be the collection of smooth functions with compact support on $\Rd$. 

\item By a cube in $\Rd$ of sidelength $\ell$, we mean a half-open cube $Q = \prod_{i=1}^d [a_i, a_i + \ell)$, $\ell > 0$.

\item Let $\Q$ denote the collection of all cubes in $\Rd$.

\item Given a cube $Q \subset \Rd$, we denote its center by $c_Q$ and its sidelength by $\ell(Q)$. For any $\lambda>0$, we denote by $\lambda Q$ the cube with the center $c_Q$ and sidelength $\lambda \ell(Q)$. The measure of $Q$ is simply denoted by $|Q|$ no matter what dimension we are in.

\item For a bounded set $E \subset \Rd$ with $|E| > 0$, we write $[f]_E := \fint_E f \, dx = \frac{1}{|E|} \int_E f \, dx$, $\langle f \rangle_E := [|f|]_E$, and $\langle f \rangle_{E, p} := [|f|^p]_E^{1/p}$ for any $1 \le p < \infty$.

\item We shall use $a\lesssim b$ and $a \simeq b$ to mean, respectively, that $a\leq C b$ and $0 < c \leq a/b \leq C$, where the constants $c$ and $C$ are harmless positive constants, not necessarily the same at each occurrence, which depend only on dimension and the constants appearing in the hypotheses of theorems. Moreover, if $c$ and $C$ depend on some given parameter $\eta$, which is somehow relevant, we write $a \lesssim_\eta b$ and $a \simeq_\eta b$.
\end{itemize}

\subsection{Dyadic lattices}
A {\tt dyadic child} of a cube $Q \subset \Rd$ is any of the $2^d$ congruent subcubes obtained by bisecting $Q$ in each coordinate direction. Let $\D(Q)$ consist of $Q$ and all its dyadic descendants. A collection $\mathscr{D}$ of cubes is said to be a {\tt dyadic lattice} if it satisfies
\begin{enumerate}
\item[$(1)$] For every $Q \in \mathscr{D}$, each child of $Q$ is in $\mathscr{D}$ as well;

\item[$(2)$] For every cubes $Q_1, Q_2 \in \mathscr{D}$, there exists $Q \in \mathscr{D}$ such that $Q_1, Q_2 \in \D(Q)$;

\item[$(3)$] For every compact set $K \subset \Rd$, there exists a cube $Q \in \mathscr{D}$ containing $K$.
\end{enumerate}

Let us recall the three lattice theorem (see \cite[Remark 3.2]{LN}) below. 
\begin{lemma}\label{lem:dyadic}
There exist $3^d$ dyadic lattices $\{\mathscr{D}^k\}_{k=1}^{3^d}$ such that for every cube $Q \subset \Rd$, there exists some $k \in \{1, \ldots, 3^d\}$ and $P_Q \in \mathscr{D}^k$ such that $Q \subset P_Q$ and $\ell(P_Q) \le 3 \ell(Q)$. 
\end{lemma}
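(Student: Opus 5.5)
The plan is to prove Lemma \ref{lem:dyadic} by building the lattices from shifted standard dyadic grids, one shift for each residue pattern modulo $3$. For $t \in \{0, 1, 2\}^d$ (there are exactly $3^d$ such vectors) we set
\begin{align*}
\mathscr{D}^t := \big\{ 2^{-k}\big([0,1)^d + j + (-1)^k \tfrac{t}{3}\big) : k \in \Z, \ j \in \Z^d \big\}.
\end{align*}
The first step is to check that each $\mathscr{D}^t$ is a dyadic lattice in the sense of (1)--(3) above. The key identity is that the shift $(-1)^k t/3$, scaled by $2^{-k}$, is compatible with bisection, because $\tfrac13 \equiv -\tfrac23 \cdot(-\tfrac12)$ modulo the appropriate integer lattice. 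This holds since $2\cdot\frac{t}{3} + \frac{t}{3} = t \in \Z^d$, so the children of a cube at level $k$ lie in the grid at level $k+1$. Property (1) follows from this, and properties (2) and (3) follow by the usual nesting argument for one-dimensional translated grids, applied coordinatewise. Since the metric is $\ell^\infty$ and cubes are products, everything reduces to $d=1$.

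The second step is the covering property. In dimension one, take an interval $I$ of length $\ell$ and choose $k$ with $2^{-k-1} < \ell \le 2^{-k}$, so that level-$k$ intervals have length between $\ell$ and $2\ell$. Actually it is better to allow length up to $3\ell$, so we pick $k$ with $\frac{3}{2}\ell \le \ldots$. More precisely, choose $k$ with $\ell \le \frac13 2^{-k} \cdot 3/1$; I would first fix $k$ so that $2^{-k} \in [\frac{3}{2}\ell, 3\ell)$. The endpoints of the level-$k$ intervals in the three grids $t=0,1,2$ together form the set $2^{-k}(\Z + \{0, \pm\frac13\})$, which is a progression of spacing $2^{-k}/3$. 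An interval of length $\ell \le \frac{2}{3}2^{-k}$ avoids at least one residue class of endpoints. The reason is that the three classes interleave with gaps $2^{-k}/3$, so an interval of length less than $2\cdot 2^{-k}/3$ meets at most two consecutive endpoints and hence misses one class $t$. That $t$ gives a level-$k$ interval containing $I$, with length $2^{-k} < 3\ell$. Care is needed with the half-open convention, since an endpoint of $I$ coinciding with a grid point is harmless for half-open intervals.

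The third step passes to dimension $d$: choose the $t_i$ coordinatewise with a common $k$, which is possible since all sides have the same $\ell$. This gives $t = (t_1, \ldots, t_d)$ and $P_Q \in \mathscr{D}^t$ with $Q \subset P_Q$ and $\ell(P_Q) \le 3\ell(Q)$.

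I expect the main obstacle to be the first step, namely making the alternating shift $(-1)^k t/3$ genuinely nested across all scales. I would verify this carefully via the base-$2$ expansion of $1/3 = 0.0101\ldots_2$, as in the standard construction of Hytönen--Pérez / Lerner--Nazarov. The counting in the second step is routine once the endpoint sets are identified.
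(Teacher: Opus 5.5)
Your Steps 2 and 3 are correct and in fact give $\ell(P_Q)=2^{-k}<3\ell(Q)$. For $2^{-k}\in[\tfrac32\ell,3\ell)$, the level-$k$ endpoints of your three one-dimensional grids are exactly the three cosets of $2^{-k}\Z$ inside $\tfrac{2^{-k}}{3}\Z$. The open interval $(a,a+\ell)$ contains at most two consecutive points of $\tfrac{2^{-k}}{3}\Z$, so one coset is missed, and using a common $k$ in all coordinates is legitimate. Property (1) is also fine: the children of $2^{-k}([0,1)^d+j+(-1)^k\tfrac{t}{3})$ are level-$(k+1)$ cubes with $j'=2j+\epsilon+(-1)^k t$.

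The gap is in Step 1: it is false that every $\mathscr{D}^t$, $t\in\{0,1,2\}^d$, satisfies (2) and (3). If $t_i=0$, the $i$-th coordinate grid is the standard one $\{2^{-k}([0,1)+j)\}$, and the hyperplane $\{x_i=0\}$ is a face of cubes at every scale. For example, in $\mathscr{D}^0$ the cubes $[0,1)^d$ and $[-1,0)^d$ have no common ancestor (their ancestors are $[0,2^N)^d$ and $[-2^N,0)^d$), and no cube of $\mathscr{D}^0$ contains $[-1,1]^d$. The binary-expansion argument you plan works precisely because $\tfrac13=0.0101\ldots_2$ has digits that are not eventually constant; for the offset $0$ they are constant, and that is exactly where it fails. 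So your construction yields only the $2^d$ lattices with $t\in\{1,2\}^d$. You cannot simply discard the others, because Step 2 needs all three cosets in each coordinate. For instance, in $d=1$ with only $t\in\{1,2\}$, the interval $[0.6,\,0.6+\tfrac56)$ contains endpoints of both grids at the scales $1$ and $2$, so the smallest containing interval has length $4>3\cdot\tfrac56$. The use of the lemma in Section~\ref{sec:bdd} only needs nestedness and the covering property, so the paper's argument would survive with your grids. The lemma as stated, however, asks for lattices in the sense of (1)--(3).

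To repair Step 1, you need in each coordinate three one-dimensional lattices whose level-$k$ endpoint sets are still the three cosets of $2^{-k}\Z$ in a progression of step $2^{-k}/3$, but with no endpoint frozen across all coarse scales. Offsets in $\tfrac13\Z$ at every level cannot do this: doubling fixes $0$ modulo $1$, which forces the grid through the origin to be the standard one. You must shift the progression. For instance, at level $k$ use the offsets $s^{(i)}_k=\tfrac{(-1)^k}{9}+\tfrac{\sigma_k(i)}{3}$, $i\in\{0,1,2\}$, where $\sigma_0$ is the identity and $\sigma_k(i)\equiv 2\sigma_{k-1}(i)-(-1)^k \pmod 3$. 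This affine map is a bijection of $\Z/3\Z$, so the recursion also defines $\sigma_k$ for $k<0$, and it is exactly the nesting condition $2s^{(i)}_{k-1}\equiv s^{(i)}_k \pmod 1$. A point $x$ is a level-$k$ endpoint of the $i$-th grid iff $2^k x\equiv s^{(i)}_k \pmod 1$. This cannot hold for all $k\le K$, since $2^k x\to0$ while $\operatorname{dist}(s^{(i)}_k,\Z)\ge\tfrac19$. That gives (2) and (3), and your Steps 2 and 3 then go through verbatim for the products $\{2^{-k}([0,1)^d+j+(s^{(t_1)}_k,\ldots,s^{(t_d)}_k))\}$, $t\in\{0,1,2\}^d$.

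Alternatively, follow the route behind the reference the paper simply cites (\cite[Remark~3.2]{LN}, which rests on the Lerner--Nazarov three lattice theorem). Take one genuine lattice $\mathscr{D}$, such as your $\mathscr{D}^{(1,\ldots,1)}$, and split $\{3P: P\in\mathscr{D}\}$ into $3^d$ dyadic lattices. For a cube $Q$ of side $\ell$, let $P\in\mathscr{D}$ be the cube with $\ell(P)\in[\tfrac{\ell}{2},\ell)$ containing the center of $Q$. Then $Q\subset 3P$ and $\ell(3P)<3\ell(Q)$.
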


Given $\gamma \in (0, 1)$, a subset $\S$ of a dyadic lattice is called {\tt $\gamma$-sparse} if for every $Q \in \S$ there exists a measurable set $E_Q \subset Q$ such that $|E_Q| \ge \gamma |Q|$ and the family $\{E_Q: Q \in \S\}$ is pairwise disjoint. 

For a fixed $\gamma \in (0, 1)$, we define the maximal sparse form by
\begin{align*}
\Lambda_{\vec{r}, r_{m+1}}^*(\vec{f}, f_{m+1})
:= \sup_{\S: \gamma \text{-sparse}} 
\sum_{Q \in \S} |Q| \prod_{j=1}^{m+1} \langle f_j \rangle_{Q, r_j},
\end{align*}
whenever $r_j \in [1, \infty)$, $j=1, \ldots, m+1$.

\subsection{Muckenhoupt weights}
A measurable function $w$ on $\Rd$ is called a {\tt weight} if $0 < w(x) < \infty$ for a.e.~$x \in \Rd$. We are going to give the definition of the multilinear Muckenhoupt classes $A_{\vec{p}, (\vec{r}, s)}$ introduced in \cite{LMO}. Let $m \ge 1$. Given $\vec{p} = (p_1, \ldots, p_m) \in [1, \infty]^m$, $\vec{r} = (r_1, \ldots, r_m) \in [1, \infty)^m$, and $s \in (1, \infty]$, we say that $(\vec{r}, s) \preceq \vec{p}$ whenever
\begin{align*}
r_j \le p_j, \, j=1, \ldots, m, \text{ and } p \le s, \text{ where } \frac1p := \sum_{j=1}^m \frac{1}{p_j}.
\end{align*}
Analogously, we say that $(\vec{r}, s) \prec \vec{p}$ if $r_j < p_j$ for each $j=1, \ldots, m$, and $p<s$.

\begin{definition}\label{def:Apr}
Let $\vec{p} = (p_1, \ldots, p_m) \in [1, \infty]^m$, $\vec{r} = (r_1, \ldots, r_m) \in [1, \infty)^m$ and $s \in (1, \infty]$ such that $(\vec{r}, s) \preceq \vec{p}$. Suppose that $\vec{w} = (w_1, \ldots, w_m)$ and each $w_i$ is a weight on $\Rd$. We say that $\vec{w} \in A_{\vec{p}, (\vec{r}, s)}$ if 
\begin{align*}
[\vec{w}]_{A_{\vec{p}, (\vec{r}, s)}} 
:= \sup_{Q \in \Q} \Big\langle w^{\frac{sp}{s - p}} \Big\rangle_Q^{\frac1p - \frac1s} 
\prod_{j=1}^m \Big \langle w_j^{- \frac{p_j r_j}{p_j - r_j}} \Big \rangle_Q^{\frac{1}{r_j} - \frac{1}{p_j}} < \infty,
\end{align*} 
where the supremum is taken over all cubes $Q \subset \Rd$, $w = \prod_{j=1}^m w_j$, and $\frac1p = \sum_{j=1}^m \frac{1}{p_j}$. When $p=s$, the term corresponding to $w$ needs to be replaced by $\esssup_Q w$ and, analogously, when $p_j = r_j$, the term corresponding to $w_i$ should be $\esssup_Q w_i^{-1}$. When $s=\infty$, the term corresponding to $w$ needs to be replaced by $\langle w^p \rangle_Q^{1/p}$.
\end{definition}

Together with multilinear extrapolation, sparse domination immediately implies quantitative weighted estimates (\cite[Corollary 2.15]{LMO} and \cite[Corollary 4.2]{Nie19}). 

\begin{theorem}\label{thm:form}
Let $r_1, \ldots, r_m \in [1, \infty)$ and $s \in (1, \infty]$. If an $m$-linear or a positive valued $m$-sublinear operator $T$ satisfies 
\begin{align*}
|\langle T(\vec{f}), \, f_{m+1} \rangle|
\lesssim \Lambda_{\vec{r}, s'}^*(\vec{f}, f_{m+1}).
\end{align*}
for all $f_1, \ldots, f_m \in L_c^{\infty}(\Rd)$, then 
\begin{align*}
\|T\|_{L^{p_1}(w_1^{p_1}) \times \cdots \times L^{p_m}(w_m^{p_m}) \to L^p(w^p)}
\lesssim [\vec{w}]_{A_{\vec{p}, (\vec{r}, s)}}^{\max\Big\{\frac{\frac{1}{r_1}}{\frac{1}{r_1} - \frac{1}{p_1}}, \ldots, \frac{\frac{1}{r_m}}{\frac{1}{r_m} - \frac{1}{p_m}}, \frac{\frac{1}{s'}}{\frac1p - \frac1s} \Big\}},
\end{align*}
for all $\vec{p} \in (1, \infty]$ with $(\vec{r}, s) \prec \vec{p}$, and for all $\vec{w} \in A_{\vec{p}, (\vec{r}, s)}$.
\end{theorem}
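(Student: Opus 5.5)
The plan is to reduce the weighted estimate to sparse bounds of the form appearing in the hypothesis, and then to bound the sparse form directly with weighted $L^p$ norms. By duality and the lattice structure, the estimate
\begin{align*}
\|T(\vec{f})\|_{L^p(w^p)} \lesssim [\vec{w}]^{\gamma}_{A_{\vec{p},(\vec{r},s)}} \prod_{j=1}^m \|f_j\|_{L^{p_j}(w_j^{p_j})}
\end{align*}
follows from
\begin{align*}
\Lambda^*_{\vec{r},s'}(\vec{f},g) \lesssim [\vec{w}]^{\gamma} \prod_{j=1}^m \|f_j w_j\|_{L^{p_j}} \, \|g w^{-1}\|_{L^{p'}}
\end{align*}
with $g\ge0$. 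This is immediate when $p\ge1$. When $p<1$, I would test against $g=|T\vec f|^{p-1}\cdots$ as in the standard sparse-to-weighted argument, or restrict to compactly supported bounded $f_j$ and use the sparse form bound with a localized $g$; density then removes these restrictions. By Lemma~\ref{lem:dyadic}, the supremum over sparse families of arbitrary cubes is reduced, at the cost of a dimensional constant, to finitely many sparse families in the dyadic lattices $\mathscr{D}^k$. So it suffices to treat a fixed sparse $\S\subset\mathscr{D}^k$.

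For fixed $\S$, I would change variables. Set $\sigma_j=w_j^{-\frac{p_jr_j}{p_j-r_j}}$ and $f_j=h_j\sigma_j^{1/r_j}$, and similarly $g=h_{m+1}\,v^{1/s'}$ with $v=w^{\frac{sp}{s-p}}$ (the exponent dual to $s$), using the obvious modifications when $p_j=r_j$ or $p=s$. Then $\langle f_j\rangle_{Q,r_j}=\langle |h_j|^{r_j}\sigma_j\rangle_Q^{1/r_j}$. Each factor is written as $\big(\frac{1}{\sigma_j(Q)}\int_Q|h_j|^{r_j}\sigma_j\big)^{1/r_j}\langle\sigma_j\rangle_Q^{1/r_j}$, and likewise for $g$ with $v$. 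The product of the weight averages $\prod_j\langle\sigma_j\rangle_Q^{1/r_j}\langle v\rangle_Q^{1/s'}$ is compared with $[\vec w]^{\gamma}$ times a power of $\prod_j\sigma_j(E_Q)\cdot v(E_Q)$ matched to the exponents $\frac{1}{r_j}-\frac1{p_j}$ and $\frac1p-\frac1s$. This uses the $A_{\vec p,(\vec r,s)}$ condition together with the fact that multilinear $A_{\vec p}$-type weights make each $\sigma_j$ and $v$ satisfy $A_\infty$-type properties; alternatively it uses $|Q|\le\gamma^{-1}|E_Q|$ and Hölder on $E_Q$. Hölder's inequality over the sum, with exponents $p_j/r_j$ and the corresponding dual exponent, then bounds the sum by the product of weighted dyadic maximal functions $\|M^{\mathscr{D}}_{\sigma_j}(|h_j|^{r_j})\|_{L^{p_j/r_j}(\sigma_j)}^{1/r_j}$. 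These are bounded uniformly in the weight, since $p_j/r_j>1$ is guaranteed by the strict relation $(\vec r,s)\prec\vec p$. Undoing the substitution recovers the weighted norms.

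The main obstacle is the exponent bookkeeping: obtaining exactly the power $\max\{\frac{1/r_j}{1/r_j-1/p_j},\frac{1/s'}{1/p-1/s}\}$ rather than a sum of exponents. To handle it I would follow the approach of \cite{Nie19}. The first step is to note that the sparse bound at one exponent, with this sharp power, is exactly the known multilinear sparse result. The second step is to invoke the quantitative multilinear extrapolation theorem of \cite[Corollary 4.2]{Nie19} (equivalently \cite[Corollary 2.15]{LMO}), which transfers the sharp power from the well-understood case to all $\vec p$ with $(\vec r,s)\prec\vec p$. The sub-case $p<1$, together with $p_j=\infty$, would be handled within that extrapolation framework rather than by direct duality.
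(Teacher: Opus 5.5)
Your proposal ends up where the paper does. The paper gives no argument of its own for this statement and simply cites \cite[Corollary 2.15]{LMO} and \cite[Corollary 4.2]{Nie19}, i.e.\ a sparse-form bound at one exponent combined with quantitative multilinear extrapolation, which is the route of your last paragraph. Your direct change-of-variables sketch and the proposed duality for $p<1$ are therefore not needed; the latter would also not work as written, since $L^p$ with $p<1$ has no such dual description. One detail should be made explicit: the stated power survives extrapolation only if you start from the balanced exponent $\frac{1}{p_j}=\frac{1}{r_jA}$, $1-\frac{1}{p}=\frac{1}{s'A}$, with $A=\sum_{j=1}^m\frac{1}{r_j}+\frac{1}{s'}>1$. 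There every entry of the maximum equals $\frac{A}{A-1}$, and the elementary sparse argument gives exactly $[\vec{w}]_{A_{\vec{p},(\vec{r},s)}}^{A/(A-1)}$; starting from another exponent, composing the sparse power with the extrapolation exponent generally overshoots the claimed maximum.
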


\subsection{Banach function spaces}
Let $(\Sigma, \mu)$ be a measure space, and $\bM_{\mu}$ be the collection of measurable functions on $(\Sigma, \mu)$. 

\begin{definition}\label{def:BFS}
We say that a mapping $\|\cdot\| : \bM_{\mu} \to [0, \infty]$ is a {\tt function norm} provided following properties are satisfied for all $f, g \in \bM_{\mu}$: 
\begin{list}{\textup{(\theenumi)}}{\usecounter{enumi}\leftmargin=1cm \labelwidth=1cm \itemsep=0.2cm 
		\topsep=0.2cm \renewcommand{\theenumi}{\arabic{enumi}}}
\item\label{bfs-1} $\|f\|=\big\| |f|\big\|$ and $\|f\|=0$ if and only if $f=0$ $\mu$-a.e.

\item\label{bfs-2} $\|f+g\| \leq\|f\| + \|g\|$.

\item\label{bfs-3} $\|\lambda f\| = |\lambda| \|f\|$ for every $\lambda \in \R$.

\item\label{bfs-4} If $|f|\leq|g|$ $\mu$-a.e., then $\|f\| \leq \|g\|$.

\item\label{bfs-5} If a sequence $\{f_k\}_{k \ge 1} \subset \bM_{\mu}$ satisfies $0 \le f_k \uparrow f \in \bM_{\mu}$ a.e. and $\sup_{k \ge 1} \|f_k\|_{\X} < \infty$, then $f \in \X$ with $\|f\|_{\X} = \sup_{k \ge 1} \|f_k\|_{\X}$;

\item\label{bfs-6} For every measurable set $E \subset \Sigma$ with $\mu(E)>0$, there exists a measurable $F \subset E$ such that $\mu(F) > 0$ and $\mathbf{1}_F \in \X$. 
\end{list}
\end{definition}

Given a function norm $\|\cdot\|$, the set 
\begin{equation*}
\X = \{f \in \bM_{\mu}: \|f\| < \infty\} 
\end{equation*} 
is called a {\tt Banach function space} over $(\Sigma, \mu)$.  In such a scenario, we shall write $\|\cdot\|_{\X}$ in place of $\|\cdot\|$ in order to emphasize the connection between the function norm $\|\cdot\|$ and its associated function space $\X$. Then $(\X,\|\cdot\|_{\X})$ is a Banach space.   

Given a Banach function space $\X$ over $(\Sigma, \mu)$, its associate space $\X'$ is defined by means of the function form 
\begin{equation*}
\|f\|_{\X'} 
:= \sup \bigg\{\int_{\Sigma} |f(x) g(x)| \, d\mu(x): \,g \in \X, \, \, \|g\|_{\X} \leq 1 \bigg\}.
\end{equation*}
With this definition, $\X'$ is also a Banach function space. Then it follows from the definition of $\X'$ that the following generalized Hölder's inequality holds: 
\begin{equation}\label{eq:Holder}
\int_{\Sigma}|f(x)g(x)| \, d\mu(x) 
\leq \|f\|_{\X} \|g\|_{\X'},
\quad f \in \X \, \text{ and } \, g \in \X'.
\end{equation}
It turns out that $\X = (\X')' =: \X''$ (cf. \cite[Theorem 71.1]{Zaa}). Therefore, one has 
\begin{equation}\label{eq:fX-norm}
\|f\|_{\X}
= \sup_{\|g\|_{\X'} \leq 1} \int_{\Sigma} |f(x)g(x)| \, d\mu(x).
\end{equation}

Given $r \in (0, \infty)$ and a Banach function space $\X$ over $(\Sigma, \mu)$, we define the spaces $\X^r$ by
\begin{equation}\label{Xr}
\X^r := \{f \in \bM_{\mu}: |f|^{\frac1r} \in\X\} 
\quad \text{and} \quad 
\|f\|_{\X^r} := \| |f|^{\frac1r}\|_{\X}^r. 
\end{equation}
Given Banach function spaces $\X_1$ and $\X_2$, their pointwise product $\X_1 \X_2$ is equipped with the function norm
\begin{equation*}
\|f\|_{\X_1 \X_2}
:= \inf \bigg\{\|f_1\|_{\X_1} \|f_2\|_{\X_2}: 
|f| \le f_1 f_2, \, 0 \le f_j \in \X_j, \, j=1, 2 \bigg\}.
\end{equation*}
If $\alpha_1, \ldots, \alpha_m >0$ with $\sum_{j=1}^m \alpha_j \le 1$, then $\prod_{j=1}^m \X_j^{\alpha_j}$ is a Banach function space, see \cite{Schep}. Consequently, if $\X_1, \ldots, \X_m$ are Banach function spaces, then
\begin{equation*}
\X := \prod_{j=1}^m \X_j 
\quad \text{ and } \quad
\Y := \X^{\frac1m} = \prod_{j=1}^m \X_j^{\frac1m}
\end{equation*}
are respectively a quasi-Banach function space and a Banach function space.

To define a rearrangement invariant function space, let $\mu_{f}$ denote the distribution function of $f \in \bM_{\mu}$: 
\begin{equation*}
\mu_f(\lambda) := \mu(\{x \in \Sigma: |f(x)| > \lambda\}), \qquad \lambda \in [0, \infty). 
\end{equation*}
A Banach function space $\X$ over $(\Sigma, \mu)$ is {\tt rearrangement invariant} if $\|f\|_{\X} = \|g\|_{\X}$ for every pair of functions $f, g\in \X$ such that $\mu_f = \mu_g$. This means that the function norm of $f$ in $\X$  depends only on its distribution function. Note that if $(\Sigma, \mu)$ is nonatomic, then $\X$ is rearrangement invariant if and only if $\X'$ is (cf. \cite[Corollary~4.4,~p.60]{BS}). 

For each $f \in \bM_{\mu}$, the decreasing rearrangement of $f$ with respect to $\mu$ is defined as 
\begin{equation*}
f^{*}_{\mu}(t) 
:= \inf\{\lambda \geq 0 : \mu_f(\lambda) \leq t\}, \quad t \in[0, \mu(\Sigma)].
\end{equation*} 
Note that the functions $f$ and $f^*_{\mu}$ have the same distribution function. One remarkable consequence of this is Luxemburg representation theorem: if $\X$ is rearrangement invariant Banach function space over $(\Sigma, \mu)$, then there exists a rearrangement invariant Banach function space $\overline{\X}$ over $[0, \mu(\Sigma))$ such that $f \in \X$ if and only if  $f^*_{\mu} \in \overline{\X}$ and $\|f^{*}_{\mu}\|_{\overline{\X}} = \|f\|_{\X}$ (cf.  \cite[Theorem 4.10, p.\,62]{BS}). This allows us to define the {\tt Boyd indices} by 
\begin{equation*}
p_{\X}^- := \lim_{t \to \infty} \frac{\log t}{\log\norm{D_t}_{\overline{\X} \to \overline{\X}}}
\quad \text{ and } \quad
p_{\X}^+ := \lim_{t\to 0^{+}}\frac{\log t}{\log\norm{D_t}_{\overline{\X}\to\overline{\X}}}, 
\end{equation*}
where $D_t: \overline{\X} \to \overline{\X}$ is the dilation operator defined by
\begin{equation*}
D_t f(s) := \left\lbrace
\begin{array}{lcl}
f(s/t), &\text{ if}& s\in[0, \mu(\Sigma) \min\{1, t\}],
\\[4pt]
0, &\text{ if}& s\in(\mu(\Sigma)\min\{1,t\}, \, \mu(\Sigma)].
\end{array}
\right. 
\end{equation*}
It is not hard to see that
\begin{align*}
1 \leq p_{\X}^- \leq p_{\X}^+ \leq \infty, \quad 
(p_{\X}^-)' = p_{\X'}^+, \quad
(p_{\X}^+)' = p_{\X'}^-, \quad
p_{\X^r}^- = r \, p_{\X}^-,
\quad\text{and}\quad  
p_{\X^r}^+ = r \, p_{\X}^+.
\end{align*}

\subsection{$\BMO$ and $\CMO$ spaces}\label{sec:BMO}
We say that a locally integrable function $b \in \BMO$ if 
\begin{equation*}
\|b\|_{\BMO} :=\sup_{Q} \fint_{Q} |b(x)-b_Q| \, dx < \infty. 
\end{equation*}
where the supremum is taken over the collection of all cubes $Q \subset \Rd$ and $b_Q :=\fint_Q b\, dx$. Let $\CMO$ denote the closure of $\mathscr{C}_c^{\infty}(\Rd)$ in $\BMO$. Additionally, the space $\CMO$ is endowed with the norm of $\BMO$. Here $\mathscr{C}_c^{\infty}(\Rd)$ is the collection of $\mathscr{C}^{\infty}(\Rd)$ functions with compact supports.

Let $T$ denote an $m$-linear operator from $X_1  \times \cdots \times X_m$ into $Y$, where $X_1,\ldots,X_m$ are some normed spaces and $Y$ is a quasi-normed space. For $(f_1, \ldots ,f_m) \in X_1 \times \cdots \times X_m$ and for a measurable vector $\b=(b_1, \ldots, b_m)$, and $1 \leq j \leq m$, we define, whenever it makes sense, the first order commutators 
\begin{align*}
[T, \b]_{e_j} (f_1, \ldots, f_m)=b_j T(f_1,\ldots, f_j,\ldots, f_m) - T(f_1, \ldots, b_j f_j, \ldots, f_m);
\end{align*}
we denoted by $e_j$ the basis element taking the value $1$ at component $j$ and $0$ in every other component, therefore expressing the fact that the commutator acts as a linear one in the $j$-th variable and leaving the rest of the entries of 
$(f_1, \ldots, f_m)$ untouched. Then, if $k \in \N_+$, we define 
\begin{align*}
[T, \b]_{ke_j}=[\cdots[[T, \b]_{e_j}, \b]_{e_j} \cdots, \b]_{e_j}, 
\end{align*}
where the commutator is performed $k$ times. Finally, if $\alpha=(\alpha_1, \ldots, \alpha_m) \in \N^m$ is a multi-index, we define 
\begin{align*}
[T, \b]_{\alpha}=[\cdots[[T, \b]_{\alpha_1 e_1}, \b]_{\alpha_2 e_2} \cdots, \b]_{\alpha_m e_m}.
\end{align*}

\section{Extrapolation of boundedness}\label{sec:bdd}
The goal of this section is to show Theorem \ref{thm:BFS}. The core of its proof is the following result. 

\begin{theorem}\label{thm:fac}
Let $(\vec{r}, r_{m+1}) \in [1, \infty)^{m+1}$ be such that $\sum_{j=1}^{m+1} \frac{1}{r_j} > 1$. Let $\Y_1, \ldots, \Y_{m+1}$ be Banach function spaces such that 
\begin{align*}
\prod_{j=1}^{m+1} \Y_j = L^1 
\quad \text{ and } \quad
\mathscr{M} := \|\M_{\vec{r}, r_{m+1}}\|_{\Y_1 \times \cdots \times \Y_{m+1} \to L^1} < \infty.
\end{align*}
Then every $h_j \in \Y_j$ and $p_j \in [r_j, \infty)$ with $\sum_{j=1}^{m+1} \frac{1}{p_j} = 1$, there exist weights $v_1, \ldots, v_{m+1}$ such that 
\begin{align*}
\prod_{j=1}^{m+1} v_j = 1, \quad 
\sup_{Q \in \Q} \prod_{j=1}^{m+1} \langle v_j^{-1} \rangle_{\frac{1}{1/r_j - 1/p_j}, Q} \lesssim \mathscr{M},
\quad
\prod_{j=1}^{m+1} \|h_j v_j\|_{L^{p_j}}
\le 2^{2(m+1)} \prod_{j=1}^{m+1} \|h_j\|_{\Y_j}.
\end{align*}
\end{theorem}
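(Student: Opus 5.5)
We would build the weights by a multilinear Rubio de Francia construction applied to the tuple $(h_1,\ldots,h_{m+1})$. First normalize so that $\|h_j\|_{\Y_j}=1$ and $h_j\ge 0$; we may also assume $h_j>0$ a.e. by replacing $h_j$ with $h_j+\varepsilon_j g_j$ for a fixed strictly positive $g_j\in\Y_j$ (available from property (6) of Definition \ref{def:BFS}). The approach is to find nonnegative functions $H_j\ge h_j$ such that the tuple $(H_1,\ldots,H_{m+1})$ is ``almost invariant'' under the multisublinear maximal operator $\M_{\vec r,r_{m+1}}$, and then to define the weights from the $H_j$.

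\textbf{Step 1: the Rubio de Francia iteration.} Since $\prod_j\Y_j=L^1$, Lorentz--Shimogaki/Lozanovskii-type factorization lets us write things so that the map $\vec h\mapsto \M_{\vec r,r_{m+1}}(\vec h)$ lands in $L^1=\prod\Y_j$. By the definition of the product norm we can split $\M_{\vec r,r_{m+1}}(\vec h)\le \prod_j g_j$ with $\prod_j\|g_j\|_{\Y_j}\le 2\mathscr M\prod_j\|h_j\|_{\Y_j}$; homogeneity lets us balance these so that $\|g_j\|_{\Y_j}\le (2\mathscr M)^{1/(m+1)}\|h_j\|_{\Y_j}$. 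This gives a map $S:\vec h\mapsto \vec g$ on the product of unit balls. We then iterate, setting
\[
H_j=\sum_{k\ge0}\frac{S^k(\vec h)_j}{(2(2\mathscr M)^{1/(m+1)})^k}.
\]
By convergence in each $\Y_j$ this gives $H_j\ge h_j$ and $\|H_j\|_{\Y_j}\le 2\|h_j\|_{\Y_j}$. The aim is the pointwise $A_1$-type bound
\[
\M_{\vec r,r_{m+1}}(\vec H)\lesssim \mathscr M\prod_j H_j .
\]
\textbf{This is the main obstacle.} $\M$ is not linear, and the splitting $S$ does not commute with sums, so the geometric-series trick does not close directly. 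To get around this we would use a stopping-time/sparse replacement: we dominate $\M$ by a finite sum over the $3^d$ dyadic lattices (Lemma \ref{lem:dyadic}) of sparse-type averages $\prod_j\langle f_j\rangle_{Q,r_j}$ with disjoint sets $E_Q$. We would also use the convex-analysis fact that a product of averages is the infimum over $\lambda_j>0$ with $\prod\lambda_j=1$ of $\sum_j \lambda_j\langle f_j\rangle_{Q,r_j}^{\,\cdot}/(m+1)$ (AM--GM). Together these linearize each factor and reduce the problem to a sublinear functional, to which the iteration applies. We may lose constants here; this is the source of the $2^{2(m+1)}$.

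\textbf{Step 2: defining the weights.} Put $v_j:=H_j^{\theta_j}\prod_i H_i^{-\beta_{ij}}$, with exponents chosen from $\frac1{p_j}$ and $\frac1{r_j}$ so that $\prod_j v_j=1$ and $h_jv_j\le H_j^{\,\cdot}$ in such a way that $\|h_jv_j\|_{L^{p_j}}^{p_j}$ is controlled by $\int\prod_i H_i=\|\prod H_i\|_{L^1}\le\prod\|H_i\|_{\Y_i}\le 2^{m+1}$. The natural choice is $v_j=H_j^{-1}\,\big(\prod_iH_i\big)^{1/p_j}$, which gives $\prod v_j=1$ because $\sum 1/p_j=1$, and $h_jv_j\le (\prod_iH_i)^{1/p_j}$. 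Hence
\[
\prod_j\|h_jv_j\|_{L^{p_j}}\le\Big\|\prod_i H_i\Big\|_{L^1}\le 2^{m+1}\cdot 2^{m+1}\prod_j\|h_j\|_{\Y_j}
\]
after undoing the normalization.

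\textbf{Step 3: the weight characteristic.} Set $t_j=\frac1{r_j}-\frac1{p_j}$, so that $\langle v_j^{-1}\rangle_{1/t_j,Q}=\langle H_j^{1/t_j}(\prod H)^{-1/(p_jt_j)}\rangle_Q^{t_j}$. We apply Hölder's inequality on $Q$ with the exponents $r_j$ and $p_j$, writing $H_j=H_jW^{-1/p_j}\cdot W^{1/p_j}$ with $W=\prod H_i$. This bounds $\prod_j\langle v_j^{-1}\rangle_{1/t_j,Q}$ by $\prod_j\langle H_j\rangle_{Q,r_j}\big/\langle W\rangle_{Q}$-type quantities. It then remains to show
\[
\prod_j\langle H_j\rangle_{Q,r_j}\lesssim \mathscr M\,\essinf_Q W ,
\]
and this is exactly the $A_1$ property from Step 1 combined with the reverse bound $\langle W\rangle_Q\ge\essinf_Q W$. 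The remaining work is to check the exponent bookkeeping in this Hölder step.
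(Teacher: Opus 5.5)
Your Step 2 matches the paper: the weights $v_j=H_j^{-1}W^{1/p_j}$ with $W=\prod_iH_i$, and the bound $\prod_j\|h_jv_j\|_{L^{p_j}}\le\|W\|_{L^1}$. The gap is that the property you aim for in Step 1 is too weak, so Step 3 does not go through. Your H\"older step runs in the wrong direction. Writing $H_j=v_j^{-1}W^{1/p_j}$ and using $\frac1{r_j}=t_j+\frac1{p_j}$ gives
\[
\langle H_j\rangle_{r_j,Q}\le\langle v_j^{-1}\rangle_{1/t_j,Q}\,\langle W\rangle_Q^{1/p_j}.
\]
This is a \emph{lower} bound for the characteristic; an upper bound would need a reverse H\"older inequality you do not have. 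In fact $\M_{\vec r,r_{m+1}}(\vec H)\lesssim\mathscr{M}\prod_jH_j$ cannot imply the conclusion. Take $m=1$, $r_1=r_2=1$, $p_1=p_2=2$, $H_1=|x|^{\alpha}$, $H_2=|x|^{-\alpha}$ with $d/2\le\alpha<d$. Since $|x|^{\alpha}\in A_2$, we get $\M_{1,1}(H_1,H_2)\le C=C\,H_1H_2$. Yet $W\equiv1$ and $v_2^{-1}=|x|^{-\alpha}$, so $\langle v_2^{-1}\rangle_{2,Q}=\infty$ for every cube containing the origin.

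What is actually needed, and what the paper proves in Lemma \ref{lem:maj}, is a stronger split condition with a separate essential infimum for each $j$:
\[
\prod_{j=1}^{m+1}\langle H_j\rangle_{r_j,Q}^{1-\frac{r_j}{p_j}}\lesssim\mathscr{M}\prod_{j=1}^{m+1}\Big(\essinf_Q H_j^{-r_j}\prod_{i=1}^{m+1}H_i\Big)^{\frac1{p_j}}.
\]
For $m=1$, $r_j=1$, $p_j=2$ it reads $\langle H_1\rangle_Q\langle H_2\rangle_Q\lesssim\mathscr{M}^2\,\essinf_QH_1\,\essinf_QH_2$, rather than your $\essinf_Q(H_1H_2)$. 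With it, Step 3 is an exact computation, not a H\"older estimate: $v_j^{-1}=(WH_j^{-r_j})^{-1/p_j}H_j^{1-r_j/p_j}$ and $\langle H_j^{1-r_j/p_j}\rangle_{1/t_j,Q}=\langle H_j\rangle_{r_j,Q}^{1-r_j/p_j}$. This is an $A_1$-type property with respect to the $m+1$ sparse operators $\A^j_{\S}$, in which the $j$-th slot is dualized and the output lies in $\Y_j'=\prod_{i\ne j}\Y_i$ (Lemma \ref{lem:EE}). It is not an $A_1$ property for $\M_{\vec r,r_{m+1}}$ itself.

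Separately, the construction in Step 1 is left open, as you acknowledge: a Rubio de Francia series cannot be summed for a non-sublinear map, and the AM--GM remark is not yet an argument. The paper avoids iteration altogether:
\begin{itemize}
\item It writes $f_j=e^{a_j}h_j$ and $g_j=e^{b_j}h_j$.
\item It shows that the set $\Pi_{\mathscr{D}'}$ of pairs $(\bm{a},\bm{b})$ satisfying the stopping-time estimate of Lemma \ref{lem:suc} is convex, by log-convexity of $\mathscr{P}_Q$ and log-concavity of $\essinf_Q\mathscr{Q}_j$.
\item It shows $\Pi_{\mathscr{D}'}$ is weakly closed inside the weakly compact convex set $\Xi\times\Xi\subset L^1(\mu)^{2(m+1)}$.
\item It extracts a diagonal point $(\bm{b},\bm{b})\in\Pi_{\mathscr{D}'}$ from Ces\`aro averages of an orbit.
\item It concludes by a finite-intersection argument over finite $\mathscr{D}'\subset\mathscr{D}$.
\end{itemize}
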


\begin{proof}[{\bf Proof of Theorem~\ref{thm:BFS}}]
Set $\Y_{m+1} = \Y'$ and $\Y_j = \X_j^{\frac1m}$, $j = 1, \ldots, m$. Then $\prod_{j=1}^{m+1} \Y_j = L^1$. Moreover,
\begin{align*}
\sum_{j=1}^m \frac{1}{m r_j} + \frac{1}{(ms)'} 
= 1 + \frac1m \bigg(\frac1r - \frac1s\bigg) > 1
\quad \text{ and } \quad
\sum_{j=1}^m \frac{1}{m p_j} + \frac{1}{(mp)'} = 1.
\end{align*}
Fix $0 \le h \in \Y'$. Let $h_{m+1} = h$ and $h_j = |f_j|^{\frac1m}$, $j=1, \ldots, m$. Recall that 
\begin{align*}
\mathcal{N} 
:= \|\M_{m \vec{r}, (ms)'}\|_{\X_1^{\frac1m} \times \cdots \times \X_m^{\frac1m}\times \Y' \to L^1} < \infty.
\end{align*}
Then by Theorem \ref{thm:fac}, one can find weights $v_1, \ldots, v_{m+1}$ such that $\prod_{j=1}^{m+1} v_j = 1$, 
\begin{align}
\label{eq:vmp-1}
& \sup_{Q \in \Q} \prod_{j=1}^m \langle v_j^{-1} \rangle_{\frac{m}{1/r_j - 1/p_j}, Q} \, 
\langle v_{m+1}^{-1} \rangle_{\frac{1}{1/(ms)' - 1/(mp)'}, Q} 
\lesssim \mathcal{N},
\\
\label{eq:vmp-2}
& \prod_{j=1}^m \||f_j|^{\frac1m} v_j\|_{L^{m p_i}}
\|h v_{m+1}\|_{L^{(m p)'}}
\le 2^{m+1} \prod_{j=1}^m \| |f_j|^{\frac1m}\|_{\X_j^{\frac1m}} \, \|h\|_{\Y'}.
\end{align}
Now take $w = \prod_{j=1}^m w_j$ and $w_j = v_j^m$ for $j =1, \ldots, m$. Then \eqref{eq:vmp-1} and \eqref{eq:vmp-2} can be rewritten as
\begin{align}\label{eq:prsn}
[\vec{w}]_{A_{\vec{p}, (\vec{r}, s)}}^{\frac1m} \lesssim \mathcal{N}
\quad \text{and} \quad
\prod_{j=1}^m \|f_j\|_{L^{p_i}_{w_j}}^{\frac1m}
\|h w^{-\frac1m}\|_{L^{(m p)'}}
\lesssim \prod_{j=1}^m \|f_j\|_{\X_j}^{\frac1m} \, \|h\|_{\Y'}.
\end{align}
Hence, by H\"older's inequality, \eqref{eq:prsn}, and the hypothesis, 
\begin{align*}
\langle |f|^{\frac1m}, h \rangle
& \le \||f|^{\frac1m} w^{\frac1m}\|_{L^{mp}} \|h w^{-\frac1m} \|_{L^{(mp)'}} 
\\
& = \|f\|_{L^p_w}^{\frac1m} \|h w^{-\frac1m} \|_{L^{(mp)'}}
\\
& \le \Phi(C_0 \mathcal{N}^m)^{\frac1m} 
\prod_{j=1}^m \|f_j\|_{L^{p_i}_{w_j}}^{\frac1m} 
\|h w^{-\frac1m} \|_{L^{(mp)'}} 
\\
& \le 2^{2(m+1)} \, \Phi(C_0 \mathcal{N}^m)^{\frac1m} 
\prod_{j=1}^m \|f_j\|_{\X_j}^{\frac1m} \, \|h\|_{\Y'},
\end{align*}
where $C_0 = C_0(d, m, \vec{p}, \vec{r}, s)$. Note that 
\begin{align*}
\Y = \Y'' 
\quad \text{ and } \quad
\|f\|_{\X}^{\frac1m} 
= \| |f|^{\frac1m} \|_{\Y}
= \sup_{\|h\|_{\Y'} \le 1} \langle |f|^{\frac1m}, |h| \rangle,
\end{align*}
which implies
\begin{align*}
\|f\|_{\X}  
\le 2^{2(m+1)} \, \Phi(C_0 \mathcal{N}^m)
\prod_{j=1}^m \|f_j\|_{\X_j}.
\end{align*}
This shows Theorem~\ref{thm:BFS}.
\end{proof}

\subsection{Multilinear Rubio de Francia algorithm}
We present a multilinear generalization of the Rubio de Francia algorithm on Banach function spaces, which is the main ingredient in proof of Theorem \ref{thm:fac}. Throughout this section, let $\mathscr{D} := \bigcup_{k=1}^{3^d} \mathscr{D}^k$, where $\mathscr{D}^k$ is the dyadic lattice obtained from Lemma \ref{lem:dyadic}.

\begin{lemma}\label{lem:maj}
Let $1 \le r_j \le p_j < \infty$ be such that $\sum_{j=1}^{m+1} \frac{1}{p_j} = 1 < \sum_{j=1}^{m+1} \frac{1}{r_j}$. Let $\Y_1, \ldots, \Y_{m+1}$ be Banach function spaces such that 
\begin{align*}
\prod_{j=1}^{m+1} \Y_j = L^1 
\quad \text{ and } \quad
\mathscr{M} := \|\M_{\vec{r}, r_{m+1}}\|_{\Y_1 \times \cdots \times \Y_{m+1} \to L^1} < \infty.
\end{align*}
Then for every $h_j \in \Y_j$ with $h_j > 0$ a.e., there exists $H_j \in \Y_j$ such that $|h_j| \le H_j$, $\|H_j\|_{\Y_j} \le 2 \|h_j\|_{\Y_j}$, and 
\begin{align*}
\prod_{j=1}^{m+1} \langle H_j \rangle_{r_j, Q}^{1 - \frac{r_j}{p_j}}
\lesssim \mathscr{M} \, \prod_{j: p_j \ne \infty} 
\Big(\essinf_Q H H_j^{-r_j} \Big)^{\frac{1}{p_j}}, \quad \forall Q \in \mathscr{D},
\end{align*}
where $H = \prod_{j=1}^{m+1} H_j$.
\end{lemma}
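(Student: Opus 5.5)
The plan is to reduce the desired estimate to an $A_1$-type condition for a single product function, and then to run a Rubio de Francia iteration for a suitably chosen nonlinear maximal operator. By homogeneity I normalize $\|h_j\|_{\Y_j}=1$. Set $\theta_j := 1-\frac{r_j}{p_j}\in[0,1)$. The key identity is that $\sum_j \frac1{p_j}=1$ gives
\begin{align*}
\prod_{j=1}^{m+1}\big(H H_j^{-r_j}\big)^{\frac1{p_j}} = H\prod_{j=1}^{m+1} H_j^{-\frac{r_j}{p_j}}=\prod_{j=1}^{m+1} H_j^{\theta_j}.
\end{align*}
So the claim becomes
\begin{align*}
\mathcal{A}(\vec H)(x):=\sup_{Q\in\mathscr D,\,Q\ni x}\prod_{j=1}^{m+1}\langle H_j\rangle_{r_j,Q}^{\theta_j}\lesssim \mathscr M\,\prod_{j=1}^{m+1}H_j(x)^{\theta_j}\quad\text{a.e.}
\end{align*}
That is, the geometric mean $G:=\prod_j H_j^{\theta_j}$ must be an "$A_1$ function" for the multisublinear operator $\mathcal A$. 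Since $\mathscr D$ consists of $3^d$ lattices, it suffices to treat each $\mathscr D^k$ separately and take the maximum.

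To construct the $H_j$, I would use the ansatz $H_j = h_j\,\Psi^{a_j}$ with a single common function $\Psi\ge 1$. The exponents $a_j$ are chosen so that the homogeneities match: $\mathcal A(\vec H)$ should scale like $\Psi^{\sum_j a_j\theta_j}$, and $\mathcal A$ should then become a positively homogeneous operator of degree one acting on $\Psi$. With this normalization, set
\begin{align*}
S\Psi := \frac{\mathcal A(h_1\Psi^{a_1},\ldots,h_{m+1}\Psi^{a_{m+1}})}{\prod_j h_j^{\theta_j}}.
\end{align*}
The iteration is $\Psi:=\sum_{k\ge0}(2\|S\|)^{-k}S^k\mathbf 1$, where the sublinearity or subadditivity of $S$ is replaced by the convexity of $F\mapsto \langle F\rangle_{r_j,Q}$ together with H\"older. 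This yields $S\Psi\le 2\|S\|\,\Psi$, which is exactly the displayed $A_1$ inequality with constant $\lesssim\|S\|$. It also yields $H_j\ge h_j$, since $\Psi\ge1$.

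The remaining task is to show that $S$ is bounded on a suitable lattice, in such a way that $\|H_j\|_{\Y_j}\le 2$. I would measure $\Psi$ in the norm dictated by the factorization $\prod_j\Y_j=L^1$, namely the weighted $L^1$-type norm $\int \Psi\prod_j h_j\,dx$ (after matching exponents), and prove $\|S\Psi\|\lesssim\mathscr M\|\Psi\|$ by duality. Here the stopping-time argument enters: for a fixed lattice, the supremum in $\mathcal A$ is dominated by a sparse family $\S$, chosen via the principal cubes where the product of averages doubles. Pairing with a test function then reduces the bound to a sparse form $\sum_{Q\in\S}|Q|\prod_j\langle\cdot\rangle_{r_j,Q}$, which is controlled by $\int\M_{\vec r,r_{m+1}}(\cdots)\,dx\le\mathscr M\prod_j\|\cdot\|_{\Y_j}$. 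Finally, the norm bound for each $H_j=h_j\Psi^{a_j}$ separately, rather than just for their product, would come from a convex-analysis step: a minimax/Hahn--Banach argument over the convex set $\{(\log H_j)_j\}$ subject to the constraints, which redistributes the factor $\Psi$ among the components. The factor $2$ in each component comes from the geometric series.

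I expect the main obstacle to be this last point: converting boundedness of $\M_{\vec r,r_{m+1}}$ into $L^1$, a statement about products, into individual control $\|H_j\|_{\Y_j}\le2\|h_j\|_{\Y_j}$. This is difficult because $S$ is genuinely nonlinear, and it is only homogeneous after the exponent matching. My first attempt would be to choose the $a_j$ so that $S$ is degree-one homogeneous and monotone. If that fails, I would instead run the iteration componentwise, updating one $H_j$ at a time while keeping the others frozen, and close the argument with a fixed-point (Schauder/Ky Fan) argument on the bounded convex set of admissible tuples.
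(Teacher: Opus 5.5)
There is a genuine gap at your first step: the reduction goes in the wrong direction. The identity $\prod_j (HH_j^{-r_j})^{1/p_j}=\prod_j H_j^{\theta_j}$ is true pointwise, but essential infima do not commute with products. One only has
\[
\prod_{j=1}^{m+1}\Big(\essinf_Q HH_j^{-r_j}\Big)^{\frac1{p_j}}\le \essinf_Q \prod_{j=1}^{m+1}H_j^{\theta_j}.
\]
So your $A_1$-type condition on $G=\prod_j H_j^{\theta_j}$ is implied by the lemma but is strictly weaker. Take $m=1$, $r_1=r_2=1$, $p_1=p$, $p_2=p'$, $H_1=u$, $H_2=u^{1-p}$. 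Then $G\equiv 1$ and your inequality is exactly $u\in A_{p'}$. The lemma instead asks for $\langle u\rangle_Q^{1/p'}\langle u^{1-p}\rangle_Q^{1/p}\lesssim(\essinf_Q u/\esssup_Q u)^{1/p'}$. This fails for a power weight $u=|x|^\alpha$, $\alpha\neq 0$, on cubes containing the origin, since the left side is $\ge 1$ and the right side is $0$.

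The separate infima are exactly what the proof of Theorem \ref{thm:fac} uses: each $\langle v_j^{-1}\rangle_{1/(1/r_j-1/p_j),Q}$ is estimated on its own via $\esssup_Q (HH_j^{-r_j})^{-1/p_j}$. In the example above, the weights $v_j=H^{1/p_j}H_j^{-1}$ satisfy the required condition iff $u^2\in A_{p'}$, which does not follow from $u\in A_{p'}$. The same defect rules out the ansatz $H_j=h_j\Psi^{a_j}$. A single function $\Psi$ cannot control the $m+1$ different quantities $\essinf_Q h_j^{1-r_j}\prod_{i\ne j}h_i\,\Psi^{\sum_i a_i-r_ja_j}$, which involve different combinations of the arbitrary $h_i$.

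Even for your weaker target, the iteration does not close. $S$ is a weighted geometric mean of averages against different densities $h_j^{r_j}$, so it is not subadditive: take $\Psi_1,\Psi_2$ supported where $h_1$, resp.\ $h_2$, is tiny. Hence $\Psi=\sum_k(2\|S\|)^{-k}S^k\mathbf{1}$ does not give $S\Psi\le 2\|S\|\Psi$, and convexity of each $F\mapsto\langle F\rangle_{r_j,Q}$ does not repair this. The componentwise bounds $\|H_j\|_{\Y_j}\le 2\|h_j\|_{\Y_j}$, which you rightly call the crux, are left to an unspecified minimax step.

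The paper handles both issues with a specific one-step map, Lemma \ref{lem:suc}. It runs a stopping time over all $3^d$ lattices, stopping when any single $\langle f_j\rangle_{r_j}$ jumps. For each $j$ separately, the sparse operator $\A^j_{\S}$ is bounded in $\Y_j'$ by $\mathscr{M}$ via duality. It is then factored through $\Y_j'=\prod_{i\ne j}\Y_i$ (Lemma \ref{lem:EE}), and the factors are added to the \emph{other} components $g_i$. This produces exactly the $j$-th infimum of $f_j^{1-r_j}\prod_{i\ne j}(g_i-h_i)$ while keeping $\|g_i\|_{\Y_i}\le 2\|h_i\|_{\Y_i}$.

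A fixed point $\bm{g}=\bm{f}$ then comes from four ingredients:
\begin{itemize}
\item convexity of $\Pi_{\mathscr{D}'}$ in the variables $a_j=\log(f_j/h_j)$, via log-convexity of $\mathscr{P}_Q$ and log-concavity of $\mathscr{Q}_j$;
\item weak compactness of $\Xi$ in $L^1(\mu)$;
\item Ces\`aro averages;
\item a finite-intersection argument over finite $\mathscr{D}'\subset\mathscr{D}$.
\end{itemize}
Your fallback (componentwise updates plus a Ky Fan-type fixed point) points in this direction. Without such a one-step map and a check of convexity and closedness, it is not yet an argument.
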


\begin{proof}[{\bf Proof of  Theorem \ref{thm:fac}}]
First, we consider the case $h_j > 0$ a.e., $j=1, \ldots, m+1$. Let $H_j \in \Y_j$ be functions obtained from Lemma \ref{lem:maj}. Pick $v_j := H^{\frac{1}{p_j}} H_j^{-1}$ so that $\prod_{j=1}^{m+1} v_j = 1$. Moreover, we have $|h_j| v_j \le H_j v_j = H^{\frac{1}{p_j}}$, which along with H\"{o}lder's inequality yields 
\begin{align*}
\prod_{j=1}^{m+1} \|h_j v_j\|_{L^{p_j}} 
\le \|H\|_{L^1}
\le \prod_{j=1}^{m+1} \|H_j\|_{\Y_j} 
\le 2^{m+1} \prod_{j=1}^{m+1} \|h_j\|_{\Y_j}.
\end{align*}
Fix a cube $Q \subset \Rd$. Then there exists some $P_Q \in \mathscr{D}$ such that $Q \subset P_Q$ and $\ell(P_Q) \le 3 \ell(Q)$. This and Lemma \ref{lem:maj} imply 
\begin{align*}
& \prod_{j=1}^{m+1} \langle v_j^{-1} \rangle_{\frac{1}{1/r_j - 1/p_j}, Q} 
\lesssim \prod_{j=1}^{m+1} \langle v_j^{-1} \rangle_{\frac{1}{1/r_j - 1/p_j}, P_Q} 
\\
& = \prod_{j=1}^{m+1} \bigg(\fint_{P_Q}  
\Big[(H H_j^{-r_j})^{- \frac{1}{p_j}} H_j^{r_j(\frac{1}{r_j} - \frac{1}{p_j})} \Big]^{\frac{1}{\frac{1}{r_j} - \frac{1}{p_j}}} \bigg)^{\frac{1}{r_j} - \frac{1}{p_j}}
\\
& \le \prod_{j=1}^{m+1} \bigg(\fint_{P_Q}  
H_j^{r_j} \bigg)^{\frac{1}{r_j} - \frac{1}{p_j}}
\prod_{j: p_j \ne \infty} 
\Big(\esssup_{P_Q} H H_j^{-r_j} \Big)^{-\frac{1}{p_j}}
\lesssim \mathscr{M}.
\end{align*}

Next, let us treat the general case $h_j \in \Y_j$. By the saturation property of $\Y_j$, there exists some $h'_j \in \Y_j$ so that $h'_j > 0$ a.e. and $\|h'_j\|_{\Y_j} \le 1$. In the scenario $\|h_j\|_{\Y_j} > 0$, choose $\widetilde{h}_j := |h_j| + \|h_j\|_{\Y_j} h'_j$ so that $\widetilde{h}_j > 0$ a.e. and $\|\widetilde{h}_j\|_{\Y_j} \le 2 \|h_j\|_{\Y_j}$. Applying the preceding result to $\widetilde{h}_j$, we obtain the desired weights $v_1, \ldots, v_{m+1}$ satisfying 
\begin{align*}
\prod_{j=1}^{m+1} \|h_j v_j\|_{L^{p_j}}
\le \prod_{j=1}^{m+1} \|\widetilde{h}_j v_j\|_{L^{p_j}}
\le 2^{m+1} \prod_{j=1}^{m+1} \|\widetilde{h}_j\|_{\Y_j}
\le 2^{2(m+1)} \prod_{j=1}^{m+1} \|h_j\|_{\Y_j}.
\end{align*}
In the scenario $\|h_j\|_{\Y_j} = 0$, we have $h_j = 0$ a.e. and $\widetilde{h}_j := |h_j| + h'_j > 0$ a.e. We then use  the preceding result to $\widetilde{h}_j$ to achieve weights $v_1, \ldots, v_{m+1}$ as desired. Additionally, the fact $h_j = 0$ a.e. gives
\begin{align*}
\prod_{j=1}^{m+1} \|h_j v_j\|_{L^{p_j}}
= 0 = \prod_{j=1}^{m+1} \|h_j\|_{\Y_j}.
\end{align*}
This completes the proof of Theorem \ref{thm:fac}.
\end{proof}

\subsection{Stopping time argument}
To prove Lemma \ref{lem:maj}, we establish a useful estimate as follows. Fix $h_j \in \Y_j$ with $h_j > 0$ a.e. Given $f_j \ge h_j$ and $g_j \ge h_j$ for each $j=1, \ldots, m+1$, define
\begin{align*}
\mathscr{P}_Q (\bm{f}) 
: = \prod_{j=1}^{m+1} \langle f_j \rangle_{r_j, Q}^{1 - \frac{r_j}{p_j}}
\quad \, \text{ and } \, \quad
\mathscr{Q}_j (\bm{f}, \bm{g}) 
:= f_j^{1-r_j} \prod_{i \ne j} (g_i - h_i).
\end{align*}

\begin{lemma}\label{lem:suc}
Let $r_j$ and $\Y_j$ satisfy the hypotheses of Lemma $\ref{lem:maj}$. Let $h_j \in \Y_j$ with $h_j > 0$ a.e., $j=1, \ldots, m+1$. Then for all $f_j \ge h_j$ with $\|f_j\|_{\Y_j} \le 2 \|h_j\|_{\Y_j}$, there exist $g_j \ge h_j$ with $\|g_j\|_{\Y_j} \le 2 \|h_j\|_{\Y_j}$ such that
\begin{align*}
\mathscr{P}_Q (\bm{f}) 
\lesssim \mathscr{M} \prod_{j: p_j \ne \infty}
\big(\essinf_Q \mathscr{Q}_j (\bm{f}, \bm{g}) \big)^{\frac{1}{p_j}}, 
\quad \forall Q \in \mathscr{D}',
\end{align*}
for every finite subset $\mathscr{D}' \subset \mathscr{D}$, where the implicit constant is independent of $\mathscr{D}'$.
\end{lemma}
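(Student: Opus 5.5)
The plan is to build the functions $g_j$ by a Rubio de Francia type construction tailored to the finite family $\mathscr{D}'$. Positivity of $h_j$ ensures the objects below are strictly positive; the implicit constant must not depend on $\mathscr{D}'$. Since $\mathscr{D}'$ is finite, the required inequality is a finite family of pointwise lower bounds: for every $Q \in \mathscr{D}'$, every $j$ with $p_j \neq \infty$ and a.e. $x \in Q$,
\begin{align*}
\prod_{i \ne j} (g_i - h_i)(x) \, f_j(x)^{1-r_j} \ge \big(C \mathscr{M}\big)^{-p_j} \mathscr{P}_Q(\bm{f})^{p_j}.
\end{align*}
So I will look for $g_i = h_i + \varphi_i$ with $\varphi_i \ge 0$ and $\|\varphi_i\|_{\Y_i} \le \|h_i\|_{\Y_i}$. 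The triangle inequality then gives $g_i \ge h_i$ and $\|g_i\|_{\Y_i} \le 2\|h_i\|_{\Y_i}$.

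\textbf{Stopping time.} I first run a stopping-time argument on $\mathscr{D}'$, separately in each of the $3^d$ lattices. I select principal cubes $\S$ for the product of averages $\prod_j \langle f_j \rangle_{r_j,Q}$: starting from the maximal cubes of $\mathscr{D}'$, a subcube is declared a new stopping cube when this product exceeds twice its value at the current stopping parent. The family $\S$ is then $\frac12$-sparse. Every $Q \in \mathscr{D}'$ has a stopping parent $\pi(Q)$ at which $\mathscr{P}_Q(\bm{f}) \lesssim \mathscr{P}_{\pi(Q)}(\bm{f})$. This reduces the problem to producing, on each $R \in \S$, the lower bound with constant $\mathscr{P}_R(\bm{f})$ on all of $R$.

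\textbf{Candidate $\varphi_i$ and the norm bound.} Guided by the form $\essinf_Q H H_j^{-r_j}$ in Lemma~\ref{lem:maj}, I take $\varphi_i$ to be a suitably normalized multiple of the sparse, maximal-type function
\begin{align*}
\varphi_i := \lambda_i \sup_{R \in \S} \mathbf{1}_R \, \langle f_i \rangle_{r_i, R}^{a_i},
\end{align*}
possibly multiplied by $f_i$-dependent factors so that the exponents close. The exponents $a_i$ are to be chosen so that, on $R$, the product $\prod_{i \ne j} \varphi_i \cdot f_j^{1-r_j}$ dominates $\mathscr{P}_R(\bm{f})^{p_j}$. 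The relations $\sum 1/p_j = 1$ and $1 - r_j/p_j \ge 0$ should make the exponent bookkeeping consistent. The norm bound $\|\varphi_i\|_{\Y_i} \lesssim \mathscr{M}\|h_i\|_{\Y_i}$ will come from duality through $\prod_j \Y_j = L^1$. Pairing $\varphi_i$ against $\prod_{k\ne i}$ of test functions in the other $\Y_k$ and using sparseness, the pairing is controlled by $\int \M_{\vec r, r_{m+1}}(\cdots) \le \mathscr{M} \prod \|\cdot\|_{\Y_k}$. Fixing $\lambda_i$ then normalizes the norm to at most $\|h_i\|_{\Y_i}$, with the factor $\mathscr{M}$ transferred into the constant in the lower bound.

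\textbf{Main obstacle.} The hard step is the pointwise factor $f_j(x)^{1-r_j}$, since $\essinf_Q f_j^{1-r_j} = (\esssup_Q f_j)^{1-r_j}$ is not controlled by averages of $f_j$. If the explicit choice above cannot absorb it, I would switch to the convex-analysis route. The set of admissible $(\log \varphi_i)_i$ satisfying the finite family of linear constraints $\sum_{i\ne j} \log\varphi_i \ge \log(c_{Q,j} f_j^{r_j-1})$ on $Q$ is convex. A minimax (Hahn--Banach) argument reduces existence of a feasible point with bounded $\Y_i$-norms to a dual inequality tested against finitely many nonnegative measures on the cubes. By the weighted Hölder/AM--GM inequality, that dual inequality is again a sparse form bounded by $\mathscr{M}\prod_j\|f_j\|_{\Y_j} \le 2^{m+1}\mathscr{M}\prod_j\|h_j\|_{\Y_j}$. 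Finiteness of $\mathscr{D}'$ is exactly what makes this minimax step legitimate, with constants independent of $\mathscr{D}'$.
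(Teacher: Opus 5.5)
\textbf{Main gap: the factor $f_j^{1-r_j}$.} You correctly isolate the obstacle, but neither of your routes resolves it, and this is the heart of the lemma.

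Your explicit candidate cannot work. When $r_j>1$, a bound $\prod_{i\ne j}\varphi_i\gtrsim f_j^{r_j-1}\times(\text{averages})$ forces the $\varphi_i$, $i\ne j$, to be large exactly where $f_j$ is large. So they must depend pointwise on $f_j$, not only on $f_i$ and on averages. For $m\ge 2$ this one function must also be split into $m$ factors lying in different spaces $\Y_i$.

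The paper does this in two steps. First it builds the bad factor into the sparse operator
\begin{align*}
\A^j_{\S}(\vec{f}, f_{m+1}) := f_j^{r_j-1}\sum_{R\in\S}\frac{|E_R|}{|R|}\,\frac{\prod_{i=1}^{m+1}\langle f_i\rangle_{r_i,R}}{\langle f_j\rangle_{r_j,R}^{r_j}}\,\mathbf{1}_R
\end{align*}
and bounds it in $\Y_j'$ by duality. H\"older gives $\langle f_j^{r_j-1}h\rangle_R\le\langle f_j\rangle_{r_j,R}^{r_j-1}\langle h\rangle_{r_j,R}$, so
\begin{align*}
\langle\A^j_{\S}(\vec f, f_{m+1}),|h|\rangle\lesssim\int\M_{\vec r,r_{m+1}}(f_1,\ldots,h,\ldots,f_{m+1})\le\mathscr{M}\prod_{i\ne j}\|f_i\|_{\Y_i}\|h\|_{\Y_j}.
\end{align*}
Second, it invokes the factorization $\Y_j'=\prod_{i\ne j}\Y_i$ (Schep's theorem applied to $\prod_k\Y_k=L^1$; Lemma~\ref{lem:EE}). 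This gives $\A^j_{\S}(\vec f, f_{m+1})\le\mathscr{M}\prod_{i\ne j}g_i^j$ with $\prod_{i\ne j}\|g_i^j\|_{\Y_i}\lesssim\prod_{i\ne j}\|h_i\|_{\Y_i}$. After rebalancing the individual norms, it sets $g_i:=h_i+\frac{1}{3m}\sum_{j\ne i}h_i^j$. Then $f_j^{1-r_j}\A^j_{\S}(\vec f, f_{m+1})\lesssim\mathscr{M}\,\mathscr{Q}_j(\bm f,\bm g)$ pointwise. On $Q$ the left side is at least $\frac12\prod_i\langle f_i\rangle_{r_i,R_Q}\langle f_j\rangle_{r_j,R_Q}^{-r_j}$, where $R_Q$ is the minimal stopping cube containing $Q$.

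Your norm bound, obtained by pairing against products of test functions in the other $\Y_k$, already tacitly needs this same factorization. Your minimax fallback amounts to re-deriving it ad hoc, and as sketched it is not a proof:
\begin{itemize}
\item the set of $(\log\varphi_i)_i$ with bounded $\Y_i$-norms is not compact in any topology you specify, since the $\log\varphi_i$ may tend to $-\infty$;
\item the dual inequality is asserted rather than computed, and the H\"older step for $f_j^{r_j-1}$ never appears.
\end{itemize}
No convex analysis is needed at this stage. The paper uses it only afterwards, in Lemma~\ref{lem:maj}, to force $\bm g=\bm f$.

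\textbf{Two further steps are wrong as stated.}

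Your per-$j$ targets $\essinf_Q\mathscr{Q}_j\ge(C\mathscr{M})^{-p_j}\mathscr{P}_Q(\bm f)^{p_j}$ do not give the lemma. Raising to $1/p_j$ and multiplying yields $(C\mathscr{M})^{-(m+1)}\mathscr{P}_Q(\bm f)^{m+1}$ rather than $(C\mathscr{M})^{-1}\mathscr{P}_Q(\bm f)$. They also have the wrong degree of homogeneity under $(\bm f,\bm g,\bm h)\mapsto\lambda(\bm f,\bm g,\bm h)$. The right split, using $\sum_j 1/p_j=1$, is $\mathscr{P}_R(\bm f)=\prod_j X_{j,R}^{1/p_j}$ with $X_{j,R}:=\prod_i\langle f_i\rangle_{r_i,R}\,\langle f_j\rangle_{r_j,R}^{-r_j}$, which scales like $\mathscr{Q}_j$.

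Stopping on the product $\prod_j\langle f_j\rangle_{r_j,Q}$ does not give $\mathscr{P}_Q(\bm f)\lesssim\mathscr{P}_{\pi(Q)}(\bm f)$. The exponents $1-r_j/p_j$ differ, so one average can rise while another falls and the product stays unchanged. You must stop as soon as any single $\langle f_j\rangle_{r_j,Q'}$ exceeds $A_0\langle f_j\rangle_{r_j,Q}$, with $A_0$ large; a threshold of $2$ does not in general yield $\frac12$-sparseness.
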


\begin{proof}
Let $\Stop_k^{(0)}$ be the maximal cubes in $\mathscr{D}' \cap \mathscr{D}^k$, $k=1, \ldots, 3^d$. Given $A_0 > 1$ and $Q \in \Stop_k^{(0)}$, we subdivide dyadically $Q$ and stop the first time that one of the following $(m+1)$ conditions occurs
\begin{align}\label{eq:stopcon}
\langle f_j \rangle_{Q', r_j} > A_0 \langle f_j \rangle_{Q, r_j}, \quad j=1, \ldots, m+1. 
\end{align}
Let $\Stop(Q)$ be the stopping time family. Since 
\begin{align*}
\sum_{\substack{Q' \in \Stop(Q) \\ \langle f_j \rangle_{Q', r_j} > A_0 \langle f_j \rangle_{Q, r_j}}} |Q'|
\le (A_0 \langle f_j \rangle_{Q, r_j})^{-r_j}
\sum_{Q' \in \Stop(Q)} \int_{Q'} |f_j|^{r_j}
\le A_0^{-1} |Q|,
\end{align*}
one can choose $A_0$ sufficiently large so that 
\begin{align*}
\sum_{Q' \in \Stop(Q)} |Q'| \le |Q|/2,
\end{align*} 
Then define 
\begin{align*}
\S_k := \bigcup_{i=0}^{\infty} \Stop_k^{(i)} 
\quad \text{ and } \quad
\Stop_k^{(i+1)} := \bigcup_{Q' \in \Stop_k^{(i)}} \Stop(Q'), \quad i \ge 0.
\end{align*} 
We claim that
\begin{align}\label{eq:gas}
\text{the family $\S_k$ is $\tfrac12$-sparse}.
\end{align}
Indeed, given $Q \in \S_k$, if we denote $E_Q := Q \setminus \bigcup_{Q' \in \Stop} Q'$, then the construction of $\Stop(Q)$ gives $|E_Q| = |Q \setminus \Omega_Q| \ge |Q|/2$. Let $Q_1, Q_2 \in \S_k$ with $Q_1 \ne Q_2$. Then $Q_1 \in \Stop^{(i_1)}_k$ and $Q_2 \in \Stop^{(i_2)}_k$ for some $i_1, i_2 \ge 0$. If $i_1 = i_2$, the maximality of $\mathrm{Stop}^{(i_1)}_k$ implies $Q_1 \cap Q_2 = \varnothing$, hence, $E_{Q_1} \cap E_{Q_2} = \varnothing$. If $i_1 \ne i_2$, we may assume $i_1 < i_2$. By construction, there exists some $Q'_1 \in \Stop(Q_1)$ so that $Q_2 \in Q'_1$, which gives $E_{Q_1} \cap Q_2 = \varnothing$, hence, $E_{Q_1} \cap E_{Q_2} = \varnothing$. This means $\{E_Q\}_{Q \in \S}$ is pairwise disjoint.

In addition, if $Q \in \Stop_k$, then let $R_Q = Q$. If $Q \notin \Stop_k$, then $Q \subset Q^*$ for some $Q^* \in \Stop_k^{(0)}$. By the finiteness of $\mathscr{D}'$ and the construction of $\Stop_k$, one can find a minimal cube $R_Q \in \Stop_k$ such that $Q \subset R_Q$. Thus, 
\begin{align*}
\langle f_j \rangle_{r_j, Q}
\le A_0 \langle f_j \rangle_{r_j, R_Q}, \quad\forall j \in \{1, \ldots, m+1\},
\end{align*}
which implies
\begin{align}\label{eq:fess}
\mathscr{P}_Q (\bm{f}) 
\lesssim \prod_{j=1}^{m+1} \langle f_j \rangle_{r_j, R_Q}^{1- \frac{r_j}{p_j}}
\lesssim \prod_{j: p_j \ne \infty} \bigg[\essinf_Q \sum_{k=1}^{3^d} \sum_{R \in \S_k} \frac{|E_R|}{|R|}
\frac{\prod_{i=1}^{m+1} \langle f_i \rangle_{r_i, R}}{\langle f_j \rangle_{r_j, R}^{r_j}} 
\mathbf{1}_R \bigg]^{\frac{1}{p_j}}.
\end{align}
If we define
\begin{align*}
\A^j_{\S}(\vec{f}, f_{m+1})
: = f_j^{r_j - 1} \sum_{Q \in \S} \frac{|E_Q|}{|Q|}
\frac{\prod_{i=1}^{m+1} \langle f_i \rangle_{r_i, Q}}{\langle f_j \rangle_{r_j, Q}^{r_j}} 
\mathbf{1}_Q, \quad j = 1, \ldots, m+1,
\end{align*}
then by H\"{o}lder's inequality,
\begin{align*}
\langle f_j^{r_j -1} h \rangle_Q 
\le \langle f_j \rangle_{r_j, Q}^{r_j - 1}  \langle h \rangle_{Q, r_j},  
\end{align*}
which further leads to
\begin{align*}
& \langle \A^j_{\S}(\vec{f}, f_{m+1}), |h| \rangle
\le \sum_{Q \in \S} |E_Q| 
\prod_{i \ne j} \langle f_i \rangle_{Q, r_i} \langle h \rangle_{Q, r_j}
\\
& \le \int_{\Rd} \M_{\vec{r}, r_{m+1}}(f_1, \ldots, f_{j-1}, h, f_{j+1}, \ldots, f_{m+1})
\\
& \le \mathscr{M} \prod_{i \ne j} \|f_i\|_{\Y_i} \, \|h\|_{\Y_j}
\le 2^m \mathscr{M} \prod_{i \ne j} \|h_i\|_{\Y_i} \, \|h\|_{\Y_j}.
\end{align*}
Accordingly, 
\begin{align*}
\|\A^j_{\S}(\vec{f}, f_{m+1})\|_{\Y'_j}
\le 2^m \mathscr{M} \prod_{i \ne j} \|h_i\|_{\Y_i}.
\end{align*}
By definition and Lemma~\ref{lem:EE}, there exist nonnegative functions $g_i^j \in \Y_i$ such that 
\begin{align}\label{eq:sffm}
\A^j_{\S}(\vec{f}, f_{m+1}) \le \mathscr{M} \prod_{i \ne j} g_i^j
\quad \text{ and } \quad 
\prod_{i \ne j} \|g_i^j\|_{\Y_i} \le 3^m \prod_{i \ne j} \|h_i\|_{\Y_i}.
\end{align}
If we take 
\begin{align*}
h_i^j := g_i^j \frac{\|h_i\|_{\Y_i}}{\|g_i^j\|_{\Y_i}} 
\bigg(\prod_{i \ne j} \frac{\|g_i^j\|_{\Y_i}}{\|h_i\|_{\Y_i}} \bigg)^{\frac1m}
\quad \text{ and } \quad 
g_i := h_i + \frac{1}{3m} \sum_{j \ne i} h_i^j,
\end{align*}
then \eqref{eq:sffm} gives $g_i \ge h_i$, $\|g_i\|_{\Y_i} \le 2 \|h_i\|_{\Y_i}$, and 
\begin{align}\label{eq:smfm}
\A^j_{\S}(\vec{f}, f_{m+1}) 
\le \mathscr{M} \prod_{i \ne j} g_i^j
= \mathscr{M} \prod_{i \ne j} h_i^j
\lesssim \mathscr{M} \prod_{i \ne j} (g_i - h_i)
= \mathscr{M} \mathscr{Q}_j(\bm{f}, \bm{g}) f_j^{r_j - 1}.
\end{align}
Thus, the desired estimate follows from \eqref{eq:fess} and \eqref{eq:smfm}.
\end{proof}

\begin{lemma}\label{lem:EE}
Under the hypotheses of Lemma~$\ref{lem:maj}$, one has
\begin{align*}
\prod_{i \ne j} \Y_i = \Y'_j
\quad\text{ and } \quad 
\langle f \rangle_{r_j, Q} \le C_{j, Q} \|f\|_{\Y_j}, \quad j = 1, \ldots, m+1.
\end{align*}
\end{lemma}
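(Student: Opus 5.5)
We must prove two things: (a) $\prod_{i\ne j}\Y_i=\Y_j'$ with equal norms, and (b) local $L^{r_j}$ control $\langle f\rangle_{r_j,Q}\le C_{j,Q}\|f\|_{\Y_j}$. The plan is to use Lozanovskii-type duality for (a) and the maximal function hypothesis tested against indicator functions for (b).

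For (a), write $\Z_j:=\prod_{i\ne j}\Y_i$. The embedding $\Z_j\hookrightarrow\Y_j'$ is immediate. If $|g|\le\prod_{i\ne j}g_i$ with $0\le g_i\in\Y_i$, then for every $f\in\Y_j$ the function $|fg|$ is dominated by $|f|\prod_{i\ne j}g_i\in\prod_{i=1}^{m+1}\Y_i=L^1$. The definition of the product norm gives
\begin{align*}
\int|fg|\le\|f\|_{\Y_j}\prod_{i\ne j}\|g_i\|_{\Y_i}.
\end{align*}
Taking the infimum over such factorizations yields $\|g\|_{\Y_j'}\le\|g\|_{\Z_j}$.

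For the reverse inclusion, the natural route is Lozanovskii's factorization theorem in Schep's form: for Banach function spaces, $\Z_j\Y_j=L^1$ holds with $\Z_j$ a Banach function space (its exponents sum to one, so \cite{Schep} applies). Then Schep's result $\Y_j\Z_j=L^1\iff\Z_j=\Y_j'$ gives equality of spaces with equivalent norms, and in fact isometrically up to the factor $1$ in Lozanovskii's theorem. The step $\prod_{i=1}^{m+1}\Y_i=\Y_j\cdot\Z_j$ is the associativity of pointwise products, $\Y_j\big(\prod_{i\ne j}\Y_i\big)=\prod_i\Y_i$; it follows directly from the infimum definitions by splitting factorizations. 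The main obstacle is justifying the cancellation "$\Y_j\Z=L^1\Rightarrow\Z=\Y_j'$". I would cite \cite{Schep}, Theorem 3.7, rather than reprove it. Failing that, I would prove $\Y_j'\subset\Z_j$ directly: Lozanovskii factors $0\le g\in\Y_j'$ as $g=\phi\psi$ with $\phi\in\Y_j$, $\psi\in\Y_j''$, where $\phi\psi$ is normed in $L^1$. One then factors $\psi\in L^1$-dual type objects through $\prod_{i\ne j}\Y_i$, with a loss of a constant. This loss is harmless, since the lemma is used only up to constants, namely $3^m$ in \eqref{eq:sffm}.

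For (b), fix $Q$ and $j$. By the saturation property, for each $i\ne j$ pick $0<\phi_i\in\Y_i$. We need $\mathbf 1_Q\prod_{i\ne j}\phi_i$-type lower bounds, so we choose them with $\langle\phi_i\rangle_{r_i,Q}>0$; this is automatic since $\phi_i>0$ a.e. Then pointwise on $Q$,
\begin{align*}
\M_{\vec r,r_{m+1}}(\phi_1,\dots,f,\dots,\phi_{m+1})(x)\ge\langle f\rangle_{r_j,Q}\prod_{i\ne j}\langle\phi_i\rangle_{r_i,Q}.
\end{align*}
Integrating over $Q$ and using the boundedness of $\M_{\vec r,r_{m+1}}$ gives
\begin{align*}
|Q|\,\langle f\rangle_{r_j,Q}\prod_{i\ne j}\langle\phi_i\rangle_{r_i,Q}\le\mathscr M\|f\|_{\Y_j}\prod_{i\ne j}\|\phi_i\|_{\Y_i}.
\end{align*}
This is the claim with
\begin{align*}
C_{j,Q}=\mathscr M|Q|^{-1}\prod_{i\ne j}\|\phi_i\|_{\Y_i}\langle\phi_i\rangle_{r_i,Q}^{-1}.
\end{align*}
One must check that each $\langle\phi_i\rangle_{r_i,Q}<\infty$; if it were infinite the inequality only improves, so we may truncate $\phi_i$ to $\min(\phi_i,1)\mathbf 1_Q$, which remains in $\Y_i$ by the lattice property.
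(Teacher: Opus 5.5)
Your argument is the paper's. For the identity $\prod_{i\ne j}\Y_i=\Y_j'$, the paper likewise writes $\Y_j\big(\prod_{i\ne j}\Y_i\big)=L^1=\Y_j\Y_j'$ and invokes Schep's cancellation result (it cites \cite[Corollary~2.6]{Schep}). For the local bound, it tests $\M_{\vec r,r_{m+1}}$ against strictly positive, normalized functions in the other $\Y_i$, exactly as you do.

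Two of your side remarks are incorrect, however. First, the parenthetical ``its exponents sum to one'' is false for $m\ge 2$. The space $\prod_{i\ne j}\Y_i$ has $m$ factors, each with exponent $1$, so the rule $\sum_i\alpha_i\le 1$ does not make it a Banach function space. The paper simply asserts this property before applying Schep, so neither text actually justifies it. One way to pin down what is needed: the K\"othe bidual $(\prod_{i\ne j}\Y_i)''$ is a Banach function space with $\Y_j\cdot(\prod_{i\ne j}\Y_i)''=L^1$ isometrically, so it equals $\Y_j'$ by the same result. What then remains is to show that $\prod_{i\ne j}\Y_i$ coincides isometrically with its bidual.

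Second, your fallback is not viable as stated. Lozanovskii's theorem factors functions in $L^1$ through $\Y_j\cdot\Y_j'$; it does not factor functions in $\Y_j'$ through $\Y_j\cdot\Y_j''$. For $\Y_j=L^2$ the latter would give $L^2\subseteq L^2\cdot L^2=L^1$, which is false.

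In part (b), the claim that an infinite $\langle\phi_i\rangle_{r_i,Q}$ ``only improves'' the inequality is backwards, but the case cannot occur. Take any $f>0$ a.e.\ in $\Y_j$, which exists by saturation. The left-hand side would then be infinite while the right-hand side is finite. So the truncation is unnecessary, though harmless.
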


\begin{proof}
Fix $j \in \{1, \ldots, m+1\}$. Note that $\Y_j, \Y'_j$, and $\prod_{i \ne j} \Y_i$ are Banach function spaces satisfying 
\begin{align*}
\Y_j \Big( \prod_{i \ne j} \Y_i \Big) 
= L^1 = \Y_j \Y'_j. 
\end{align*}
Then it follows from \cite[Corollary~2.6]{Schep} that $\prod_{i \ne j} \Y_i = \Y'_j$ isometrically. 

Fix $f \in \Y_j$. Choose $f_j = f$ and $0 < f_i \in \Y_i$ with $\|f_i\|_{\Y_i} = 1$ for every $i \ne j$. By definition and the boundedness of $\M_{\vec{r}, r_{m+1}}$, we obtain
\begin{align*}
|Q| \langle f \rangle_{r_j, Q} \prod_{i \ne j} \langle f_i \rangle_{r_i, Q}
\le \int_Q \M_{\vec{r}, r_{m+1}}(\vec{f}, f_{m+1})(x) \, dx
\le \mathscr{M} \prod_{i=1}^{m+1} \|f_i\|_{\Y_i}
= \mathscr{M} \|f\|_{\Y_j}.
\end{align*}
This proves $\langle f \rangle_{r_j, Q} \le C_{j, Q} \|f\|_{\Y_j}$.
\end{proof}

\subsection{Proof of Lemma \ref{lem:maj}}
By the fact $\prod_{j=1}^{m+1} \Y_j = L^1$, we have $\phi_j := \prod_{i \ne j} h_i \in \Y'_j$. Define a probability measure $\mu$ by 
\begin{align*}
d\mu(x) := A_0^{-1} \prod_{j=1}^{m+1} \phi_j(x) \, dx 
\quad \text{ and } \quad
0 < A_0 := \int_{\Rd} \prod_{j=1}^{m+1} \phi_j \, dy
= \int_{\Rd} h_j \phi_j \, dx < \infty.
\end{align*}
Set 
\begin{align*}
\Xi := \prod_{j=1}^{m+1} \Xi_j
\quad \text{ and } \quad
\Xi_j := \big\{a \in L^1(\mu) : a \ge 0, \ \|e^a h_j\|_{\Y_j} \le 2 \|h_j\|_{\Y_j} \big\}.
\end{align*}
Observe that $\Xi_j$ is convex since for every $a, b \in \Xi_j$ and $\alpha \in (0, 1)$,
\begin{align*}
\|e^{(1-\alpha) a + \alpha b} h_j\|_{\Y_j}
\le \|(1-\alpha) e^a h_j + \alpha e^b h_j\|_{\Y_j}
\le (1-\alpha) \|e^a h_j\|_{\Y_j} + \alpha \|e^b h_j\|_{\Y_j}
\le 2 \|h_j\|_{\Y_j}.
\end{align*}
Let $\{a_k\}_{k=1}^{\infty} \subset \Xi_j$ be such that $\lim_{k \to \infty} \|a_k - a\|_{L^1(\mu)} = 0$. Then $0 \le a \in L^1(\mu)$ and there exists a subsequence $\{a_{k_{\ell}}\}_{\ell \ge 1} \subset \{a_k\}_{k \ge 1}$ such that $a_{k_{\ell}} \to a$ $\mu$-a.e., which, along with the fact $d\mu \ll dx \ll d\mu$, yields $e^{a_{k_{\ell}}} h_j \to e^a h_j$ a.e.. Setting $H_N := \inf_{\ell \ge N} e^{a_{k_{\ell}}} h_j$ for each $N \ge 1$, we see that $H_N $ is increasing, $\lim_{N \to \infty} H_N = e^a h_j$ a.e., and $\|H_N\|_{\Y_j} \le \|e^{a_{k_N}} h_j\|_{\Y_j} \le 2 \|h_j\|_{\Y_j}$. Then by the Fatou property \eqref{bfs-5}, $e^a h_j \in \Y_j$ and $\|e^a h_j\|_{\Y_j} \le 2 \|h_j\|_{\Y_j}$. This shows $a \in \Xi_j$. Hence, $\Xi_j$ is closed.

Moreover, for all $a \in \Xi_j$,
\begin{align*}
\|e^a\|_{L^1(\mu)}
= A_0^{-1} \|e^a h_j \phi_j\|_{L^1(\mu)}
\lesssim \|e^a h_j\|_{\Y_j} \|\phi_j\|_{\Y'_j}
\lesssim \|h_j\|_{\Y_j} \|\phi_j\|_{\Y'_j},
\end{align*}
which gives for all $A>1$
\begin{align*}
\sup_{a \in \Xi_j} \|a\|_{L^1(\mu)} \lesssim 1
\quad \text{ and } \quad
\int_{\{a>A\}} a \, d\mu 
\le A e^{-A} \|e^a\|_{L^1(\mu)}
\lesssim A e^{-A} \to 0,
\end{align*}
as $A \to \infty$. Then \cite[Theorem 3.2.1]{DP} implies $\Xi_j$ is weakly compact. Hence, 
\begin{align}\label{eq:Ewcpt}
\text{$\Xi$ is convex and weakly compact, and } 0 \in \Xi.
\end{align}

Given a finite subset $\mathscr{D}' \subset \mathscr{D}$, let $\Pi_{\mathscr{D}'}$ be the collection of all pairs $(\bm{a}, \bm{b}) \in \Xi \times \Xi$ such that Lemma \ref{lem:suc} holds for $f_j = e^{a_j} h_j$ and $g_j = e^{b_j} h_j$, $j=1, \ldots, m+1$. We claim that
\begin{align}\label{eq:EDcon}
\text{$\Pi_{\mathscr{D}'}$ is convex and weakly closed.}
\end{align}
Assuming \eqref{eq:EDcon} holds momentarily, let us conclude the proof. In view of Lemma \ref{lem:suc}, for every $\bm{a} \in \Xi$, there exists some $\bm{a}' \in \Xi$ so that $(\bm{a}, \bm{a}') \in \Pi_{\mathscr{D}'}$. Thus, there exists a sequence $\{\bm{a}_k\}_{k \ge 1} \subset \Xi$ satisfying $(\bm{a}_k, \bm{a}_{k+1}) \in \Pi_{\mathscr{D}'}$. By the convexity of both $\Xi$ and $\Pi_{\mathscr{D}'}$, there holds
\begin{align}\label{eq:bck}
(\bm{b}_k, \bm{c}_k) 
:= \bigg(\frac1k \sum_{i=1}^k \bm{a}_i, \, \frac1k \sum_{i=1}^k \bm{a}_{i+1}  \bigg)
\in \Pi_{\mathscr{D}'} \subset \Xi \times \Xi.
\end{align}
Since every weakly compact subset of a Banach space is bounded (see \cite[p. 367]{DP}), we have
\begin{align}\label{eq:bkbc}
\|\bm{b}_k - \bm{c}_k\|_{\Xi}
= k^{-1} \|\bm{a}_{k+1} - \bm{a}_1\|_{\Xi} \longrightarrow 0, \quad \text{as } k \to \infty.
\end{align}
In view of \eqref{eq:Ewcpt}, $\Xi \times \Xi$ is weakly compact, which together with \eqref{eq:bck} yields a subsequence $\{(\bm{b}_{k_{\ell}}, \bm{c}_{k_{\ell}}) \}_{\ell \ge 1} \subset \{(\bm{b}_k, \bm{c}_k)\}_{k \ge 1}$ such that
\begin{align}\label{eq:wcg}
(\bm{b}_{k_{\ell}}, \bm{c}_{k_{\ell}}) \xrightarrow{\text{weakly}} (\bm{b}, \bm{c}) \in \Xi \times \Xi. 
 \end{align}
Consequently, for every $\mathscr{L} \in \Xi^*$, 
\begin{align*}
\mathscr{L}(\bm{b} - \bm{c}) 
= \lim_{\ell \to \infty} \mathscr{L}(\bm{b}_{k_{\ell}} - \bm{c}_{k_{\ell}}),
\end{align*}
and by \eqref{eq:bkbc},
\begin{align*}
\mathscr{L}(\bm{b}_{k_{\ell}} - \bm{c}_{k_{\ell}})
\le \|\mathscr{L}\|_{\Xi^*} \|\bm{b}_{k_{\ell}} - \bm{c}_{k_{\ell}}\|_{\Xi} 
\longrightarrow 0, \quad \text{as } \ell \to \infty.
\end{align*}
This means $\bm{b} = \bm{c} \in \Xi$, which along with \eqref{eq:EDcon}, \eqref{eq:bck}, and \eqref{eq:wcg} implies $(\bm{b}, \bm{b}) \in \Pi_{\mathscr{D}'}$. 

Since the diagonal mapping from $\Xi$ to $\Xi \times \Xi$ is weakly continuous, the assertion \eqref{eq:EDcon} gives
\begin{align}\label{eq:Xid-1}
\Xi_{\mathscr{D}'} 
:= \big\{\bm{a} \in \Xi: (\bm{a}, \bm{a}) \in \Pi_{\mathscr{D}'} \big\} \text{ is weakly closed and } 
\bm{b} \in \Xi_{\mathscr{D}'}.
\end{align}
Observe that for every $N \ge 1$ and finite subsets $\mathscr{D}_i \subset \mathscr{D}$, $i=1, \ldots, N$, 
\begin{align}\label{eq:Xid-2}
\bigcap_{i=1}^N \Xi_{\mathscr{D}_i} 
= \Xi_{\bigcup_{i=1}^N \mathscr{D}_i} 
\ne \varnothing.
\end{align}
As a consequence of \eqref{eq:Xid-1} and \eqref{eq:Xid-2}, there holds
\begin{align*}
\bigcap_{\substack{\mathscr{D}' \subset \mathscr{D} \\ \mathscr{D}': \text{ finite}}} \Xi_{\mathscr{D}'} 
\ne \varnothing,
\end{align*}
which gives some $\bm{a} = (a_1, \ldots, a_{m+1}) \in \Xi_{\mathscr{D}'}$ for every finite subset $\mathscr{D}' \subset  \mathscr{D}$. Then by definition of $\Xi_{\mathscr{D}'}$ and $\Pi_{\mathscr{D}'}$, we have $f_j = e^{a_j} h_j$ with $\|f_j\|_{\Y_j} \le 2 \|h_j\|_{\Y_j}$ and 
\begin{align*}
\mathscr{P}_Q (\bm{f}) 
& \lesssim \mathscr{M} \prod_{j: p_j \ne \infty}
\big(\essinf_Q \mathscr{Q}_j (\bm{f}, \bm{f}) \big)^{\frac{1}{p_j}}
\\
& \le \mathscr{M} \prod_{j: p_j \ne \infty}
\bigg(\essinf_Q f_j^{-r_j} \prod_{i=1}^{m+1} f_i \bigg)^{\frac{1}{p_j}}, 
\quad \forall Q \in \mathscr{D}',
\end{align*}
for every finite subset $\mathscr{D}' \subset  \mathscr{D}$, where $\bm{f} = (f_1, \ldots, f_{m+1})$. Taking $H_j := f_j = e^{a_j} h_j$, we complete the proof of Lemma \ref{lem:maj}.

It remains to demonstrate \eqref{eq:EDcon}. The convexity of $\bm{a} \mapsto \log \langle e^{a_j} h_j \rangle_{r_j, Q}$ implies 
\begin{align}\label{eq:pcx-1}
\bm{a} \mapsto \log \mathscr{P}_Q(\bm{f}) \, \, \text{ is convex}.
\end{align} 
In addition, note that
\begin{align*}
\frac{d^2}{dx^2} \log(e^x-1)
= - e^x (e^x-1)^{-2} < 0, \quad \forall x > 0,
\end{align*}
and 
\begin{align*}
\log \mathscr{Q}_j(\bm{f}, \bm{g})
& = \sum_{i \ne j} \big[\log h_j + \log(e^{b_j} - 1) \big] - (r_j - 1)(a_j + \log h_j),
\end{align*}
which indicates
\begin{align}\label{eq:pcx-2}
(\bm{a}, \bm{b}) \longmapsto
\sum_{j: p_j \ne \infty}\frac{1}{p_j} \log \big(\essinf_Q \mathscr{Q}_j(\bm{f}, \bm{g}) \big) 
\quad \text{is concave}.
\end{align}
Thus, the convexity of $\Pi_{\mathscr{D}'}$ follows from \eqref{eq:pcx-1} and \eqref{eq:pcx-2}.

In order to justify the weak closedness of $\Pi_{\mathscr{D}'}$, let $\{(\bm{a}_k, \bm{b}_k)\}_{k \ge 1} \subset \Pi_{\mathscr{D}'}$ and $(\bm{a}_k, \bm{b}_k) \to (\bm{a}, \bm{b})$ in $(L^1(\mu))^{2(m+1)}$ norm. Then there exists a subsequence $\{(\bm{a}_{k_{\ell}}, \bm{b}_{k_{\ell}})\}_{\ell \ge 1} \subset \{(\bm{a}_k, \bm{b}_k)\}_{k \ge 1}$ such that 
$(\bm{a}_{k_{\ell}}, \bm{b}_{k_{\ell}}) \to (\bm{a}, \bm{b})$ a.e., where we have used the fact $d\mu \ll dx \ll d\mu$. Then for ech $j=1 ,\ldots, m+1$,
\begin{align}\label{eq:fee-1}
f_j^{\ell} := e^{a_{k_{\ell}, j}} h_j \to e^{a_j} h_j =: f_j
\quad \text{ and } \quad
g_j^{\ell} := e^{b_{k_{\ell}, j}} h_j \to e^{a_j} h_j =: g_j, \quad \text{a.e.}, 
\end{align} 
which implies 
\begin{align}\label{eq:fee-2}
\mathscr{Q}_j(\bm{f}^{\ell}, \bm{g}^{\ell}) \to \mathscr{Q}_j(\bm{f}, \bm{g}) \quad \text{a.e.}. 
\end{align} 
By Lemma \ref{lem:EE}, $0 < \langle f_j \rangle_{r_j, Q} < \infty$ for each $j=1, \ldots, m+1$. Now invoking Fatou's lemma and \eqref{eq:fee-1}--\eqref{eq:fee-2}, we arrive at
\begin{align*}
\mathscr{P}_Q (\bm{f}) 
\le \liminf_{\ell} \mathscr{P}_Q (\bm{f}^{\ell})
& \lesssim \mathscr{M} \limsup_{\ell} \prod_{j: p_j \ne \infty}
\big(\essinf_Q \mathscr{Q}_j (\bm{f}^{\ell}, \bm{g}^{\ell}) \big)^{\frac{1}{p_j}}
\\
& \le \mathscr{M} \prod_{j: p_j \ne \infty}
\big(\limsup_{\ell} \essinf_Q \mathscr{Q}_j (\bm{f}^{\ell}, \bm{g}^{\ell}) \big)^{\frac{1}{p_j}}
\\
& \le \mathscr{M} \prod_{j: p_j \ne \infty}
\big(\essinf_Q \mathscr{Q}_j (\bm{f}, \bm{g}) \big)^{\frac{1}{p_j}}.
\end{align*}
Thus, $(\bm{a}, \bm{b}) \in \Pi_{\mathscr{D}'}$, which means $\Pi_{\mathscr{D}'}$ is closed. By \cite[Theorem 3.7]{Bre},
$\Pi_{\mathscr{D}'}$ is weakly closed. So far, we have proved \eqref{eq:EDcon}.

\section{Extrapolation of compactness}\label{sec:cpt}
This section is devoted to showing Theorem \ref{thm:BFScpt}. 

\subsection{A compactness criterion}
Given $r, a >0$, definde
\begin{align*}
S_r^a f(x) 
:= \bigg( \fint_{B(0, r)} |\tau_y f(x) - f(x)|^a \, dy \bigg)^{\frac1a}, \quad x \in \Rd.
\end{align*}

\begin{lemma}\label{lem:ccc}
Let $0 < a \le 1$ and $\X$ be a complete quasi-Banach function lattice such that
$\X \hookrightarrow L^a(Q)$ and $\mathbf{1}_Q \in \X$ for every cube $Q$. If a subset $\K \subset \X$ satisfies the following
\begin{list}{\rm (\theenumi)}{\usecounter{enumi}\leftmargin=1cm \labelwidth=1cm \itemsep=0.2cm \topsep=0.2cm \renewcommand{\theenumi}{\alph{enumi}}}
 
\item\label{eq:cc-1} ${\displaystyle \sup_{f \in \K} \|f\|_{\X} < \infty}$, 

\item\label{eq:cc-2} ${\displaystyle \lim_{A \to \infty} \sup_{f \in \K} 
\|f \mathbf{1}_{B(0, A)^c}\|_{\X}=0}$, 

\item\label{eq:cc-3} ${\displaystyle \lim_{r \to 0} \sup_{f \in \K} \|S_r^a f\|_{\X} = 0}$. 
\end{list} 
then $\K$ is precompact in $\X$.
\end{lemma}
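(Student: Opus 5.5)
We will prove Lemma \ref{lem:ccc} by showing that $\K$ is totally bounded in $\X$. The plan is to approximate each $f \in \K$ by a local average of itself, cut off to a large ball, and then use finite-dimensional compactness. Since $\X$ is only quasi-Banach, we work with the quasi-triangle inequality with constant $K_\X$. Fix $\varepsilon>0$. By \eqref{eq:cc-2}, choose $A$ with $\sup_{f\in\K}\|f\mathbf 1_{B(0,A)^c}\|_\X<\varepsilon$. It then suffices to show that $\K_A:=\{f\mathbf 1_{B(0,A)}: f\in\K\}$ is totally bounded.

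\textbf{Step 1: approximation by averages.} For $r>0$ cover $B(0,A+1)$ by a finite grid $\mathcal G_r$ of disjoint cubes of sidelength comparable to $r$. Define
\begin{align*}
E_r f:=\sum_{P\in\mathcal G_r,\,P\cap B(0,A)\neq\varnothing}\Big(\fint_{P} |f|^a\Big)^{\frac1a}\text{-type averages}\ \mathbf 1_P .
\end{align*}
Because $a\le 1$, we cannot use $L^1$ averages directly. We instead set $E_rf=\sum_P c_P(f)\mathbf 1_P$ with $c_P(f)=\fint_P f$ when $f$ is locally integrable. Otherwise we use a suitably chosen value $c_P(f)$, for instance $f(x_P)$ with $x_P$ chosen so that $|f(x)-c_P(f)|$ is controlled by a translation average.

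The key pointwise estimate to aim for is the following: for $x\in P$, with $P\subset B(x,Cr)$, we have $|f(x)-c_P(f)|\lesssim S^a_{Cr}f(x)$. When $c_P(f)=f(x_P)$, Chebyshev-type selection gives this. Concretely, we average over $y$ in the $a$-power sense, using
\[
\inf_{c}\Big(\fint_P|f-c|^a\Big)^{1/a}\lesssim \Big(\fint_P\fint_P|f(x)-f(z)|^a\,dz\,dx\Big)^{1/a}.
\]
The more robust route is to compare with $S^a$ pointwise for a.e. $x$. The main obstacle is making the piecewise-constant approximation genuinely dominated pointwise by $S^a_{Cr}f$, so that lattice monotonicity and \eqref{eq:cc-3} give $\sup_f\|(f-E_rf)\mathbf 1_{B(0,A)}\|_\X<\varepsilon$. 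I would try $c_P(f):=$ a value minimizing $\fint_P|f-c|^a$ (an $a$-median), and bound
\[
|f(x)-c_P(f)|\le |f(x)-f(z)|+|f(z)-c_P|,
\]
raised to the power $a$ and averaged in $z\in P\subset x+B(0,Cr)$. This yields $|f(x)-c_P|^a\lesssim S^a_{Cr}f(x)^a+\fint_P|f-c_P|^a$. The last term is at most $\fint_P|f-f(x')|^a$ for any $x'\in P$, and hence is bounded by $\fint_P S^a_{Cr}f(x')^a\,dx'$. This average term is then controlled in $\X$ by $\|\mathbf 1_P\|_\X|P|^{-1/a}\|S^a_{Cr}f\|_{L^a(P)}\lesssim C_P\|S^a_{Cr}f\|_\X$, using $\X\hookrightarrow L^a(P)$. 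The constant $C_P$ depends on $r$, so we need care with the order of choices. I would instead choose the value $c_P$ via $x'$ in a good set where $S^a_{Cr}f(x')^a\le 2\fint_P (S^a_{Cr}f)^a$, and absorb it pointwise. If this absorption fails, the fallback is to first fix $r$ via \eqref{eq:cc-3} small relative to the finite number of cubes.

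\textbf{Step 2: finite dimensionality.} For fixed $r$, $E_r$ maps $\K_A$ into the finite-dimensional span of $\{\mathbf 1_P\}$, which lies in $\X$ since $\mathbf 1_Q\in\X$. The coefficients are bounded uniformly over $\K$ by $|c_P(f)|\lesssim|P|^{-1/a}\|f\|_{L^a(P)}\lesssim_P\|f\|_\X$, using the embedding $\X\hookrightarrow L^a(P)$ and \eqref{eq:cc-1}. A bounded set in a finite-dimensional quasi-normed space is totally bounded, so $E_r(\K_A)$ admits a finite $\varepsilon$-net. Combining Steps 1 and 2 with the quasi-triangle inequality, $\K$ has a finite $C\varepsilon$-net for every $\varepsilon>0$. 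Completeness of $\X$ then gives precompactness.
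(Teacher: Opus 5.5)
Your architecture is the same as the paper's: cut off at a large ball, replace $f$ by a piecewise constant function on a finite grid of cubes of side $\simeq r$, and use that a bounded subset of a finite-dimensional subspace of $\X$ is totally bounded. Step~2 is fine. The gap is in Step~1, at exactly the point you call the main obstacle. You never establish the pointwise bound $|f(x)-c_P(f)|\lesssim S^a_{Cr}f(x)$, and neither of your remedies works.

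After the correct estimate $|f(x)-c_P|^a\le\fint_P|f(x)-f(z)|^a\,dz+\fint_P|f-c_P|^a$, you control the last term by averaging over an auxiliary point $x'$. That leaves a quantity constant on $P$, namely $\fint_P(S^a_{Cr}f)^a$, and choosing $x'$ in a ``good set'' does not change this. To bound $\big\|\sum_P\big(\fint_P g^a\big)^{1/a}\mathbf 1_P\big\|_\X$ by $\|g\|_\X$ you would need an averaging operator to be bounded on $\X$. That is not among the hypotheses, which give only the lattice property, completeness, $\X\hookrightarrow L^a(Q)$ and $\mathbf 1_Q\in\X$. Your fallback is circular. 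Estimating cube by cube produces a prefactor depending on $\#\mathcal G_r\sim(A/r)^d$ and on the local constants $\|\mathbf 1_P\|_\X$ and $\|\cdot\|_{\X\to L^a(P)}$, and this prefactor is in general unbounded as $r\to0$. Since (c) gives no rate, $r$ cannot be chosen ``small relative to the number of cubes'', because that number depends on $r$.

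The missing idea is one line, and you were next to it when you wrote $\fint_P|f-c_P|^a\le\fint_P|f-f(x')|^a$ for any $x'\in P$: take $x'=x$. That is, test the minimality of the $a$-median against the constant $c=f(x)$ at the very point being estimated. For a.e.\ $x\in P$, using $a\le1$, this gives
\begin{align*}
|f(x)-c_P(f)|^a
\le \fint_P|f(x)-f(z)|^a\,dz+\fint_P|f(z)-c_P(f)|^a\,dz
\le 2\fint_P|f(x)-f(z)|^a\,dz
\lesssim S^a_{Cr}f(x)^a .
\end{align*}
The constant is independent of $r$ and $f$, since $P\subset B(x,Cr)$ and $|B(0,Cr)|\simeq|P|$. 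This is exactly the paper's pointwise estimate.

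With it, the lattice property and (c) give $\sup_{f\in\K}\|(f-E_rf)\mathbf 1_{B(0,A)}\|_\X\to0$ as $r\to0$. The $P$-dependent embedding constants then enter only when bounding the coefficients, $|c_P(f)|\le 2^{1/a}\big(\fint_P|f|^a\big)^{1/a}$ (test minimality against $c=0$), for the already fixed grid. The rest of your argument then goes through. A minimizer exists because $c\mapsto\fint_P|f-c|^a$ is continuous and coercive for $f\in L^a(P)$, and a near-minimizer would also do.

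You should discard the alternatives $c_P=\fint_Pf$ and $c_P=f(x_P)$. The first needs local integrability and is controlled by an $L^1$-type oscillation, which dominates rather than is dominated by $S^a$. The second again leaves a term that is not pointwise controlled.
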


\begin{proof}
Given $N \ge 1$, let $\Omega_N := [-2^N, 2^N)^{-d}$ and $\Q_N$ be the collection of dyadic cubes of sidelength $2^{-N}$ in $\Omega_N$. Define
\begin{align*}
\alpha_Q(f) 
:= \inf_{\alpha \in \R} \langle |f - c| \rangle_{Q, a}
\quad \text{ and } \quad
P_N f: = \sum_{Q \in \Q_N} \alpha_Q(f) \mathbf{1}_Q.
\end{align*}
It follows from Lemma \ref{lem:XQ} that $\mathbf{1}_Q \in \X$ and 
\begin{align*}
|\alpha_Q(f)|
\lesssim \langle f - \alpha_Q(f) \rangle_{Q, a} + \langle f \rangle_{Q, a}
\le 2 \langle f \rangle_{Q, a}
\le C_Q \|f\|_{\X}.
\end{align*}
In light of the condition \eqref{eq:cc-1} and $\# \Q_N = 2^{(2N+1)d}$, $P_N$ is a finite dimensional operator and $\{P_N f: f \in \K\}$ is a bounded subset of $\X$. Thus, $\{P_N f: f \in \K\}$ is totally bounded in $\X$. 

Let $\varepsilon > 0$ and fix $f \in \K$. Then one can find a finite number of functions $\{f_1, \ldots, f_{k_0}\} \subset \K$ such that
\begin{align}\label{eq:err-1}
\|P_N f - P_N f_k\|_{\X} \le \varepsilon, \text{ for some } k \in \{1, \ldots, k_0\}.
\end{align}
By the conditions \eqref{eq:cc-2} and \eqref{eq:cc-3}, there exists some $A_0 = A_0(\varepsilon) \ge 1$ such that 
\begin{align}\label{eq:err-2}
\|f \mathbf{1}_{B(0, A)^c}\|_{\X} + \|S_r^a f\|_{\X} 
\le \varepsilon, \quad \forall A > A_0, \, \,0 < r \le A_0^{-1}.
\end{align}
Note that $P_N f(x) = 0$ for all $x \notin \Omega_N$. If $x \in \Omega_N$, then there exists some $Q_x \in \Q_N$ such that $x \in Q_x$. Then for all $y \in Q_x$,
\begin{align*}
|f(x) - \alpha_{Q_x}(f)|^a 
\le |f(x) - \alpha_{Q_x}(f)|^a + |f(x) - f(y)|^a,
\end{align*}
which leads to 
\begin{align}\label{eq:err-3}
|f(x) - P_N f(x)|
& = |f(x) - \alpha_{Q_x}(f)| 
\le 2^{\frac1a} \langle f(x) - f(\cdot) \rangle_{Q_x, a}
\\ \nonumber
& \lesssim \langle f(x) - f(\cdot) \rangle_{B(x, 2^{-N} \sqrt{d}), a}
= S_{2^{-N} \sqrt{d}}^a f(x).
\end{align}
Pick $N = N(\varepsilon) \ge 1$ large enough so that $2^N > A_0 \sqrt{d}$. Then invoking \eqref{eq:err-1}-- \eqref{eq:err-3} and the ideal property \eqref{bfs-4}, we have  
\begin{align*}
\|f - f_k\|_{\X}
& \lesssim \|(f - P_N f) \mathbf{1}_{\Omega_N^c}\|_{\X} 
+ \|(f - P_N f) \mathbf{1}_{\Omega_N}\|_{\X} 
+ \|P_N f - P_N f_k\|_{\X}
\\
& \lesssim \|f \mathbf{1}_{B(0, 2^N)^c}\|_{\X} 
+ \|S_{2^{-N} \sqrt{d}}^a f\|_{\X}
+ \|P_N f - P_N f_k\|_{\X}
\lesssim \varepsilon.
\end{align*}
This shows $\K$ is totally bounded in $\X$. In metric spaces, a set is compact if and only if it is complete and totally bounded. Therefore, $\K$ is precompact in $\X$.
\end{proof}

\begin{lemma}\label{lem:XQ}
Let $\X_1, \ldots, \X_m$ be Banach function spaces such that
\begin{align*}
\M_{m \vec{r}, (ms)'}: \X_1^{\frac1m} \times \cdots \times \X_m^{\frac1m}\times \Y' \to L^1 
\quad\text{is bounded},
\end{align*}
where $\Y = \X^{\frac1m}$ and $\X = \prod_{j=1} \X_j$. Then for every cube $Q$,
\begin{align*}
\X_j \hookrightarrow L^{r_j}(Q), \quad
\X \hookrightarrow L^r(Q),
\quad \text{and } \quad
\mathbf{1}_Q \in \X,
\end{align*}
where the implicit constants may depend on $Q$.
\end{lemma}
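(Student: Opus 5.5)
We need to prove Lemma \ref{lem:XQ}: $\X_j \hookrightarrow L^{r_j}(Q)$, $\X \hookrightarrow L^r(Q)$ and $\mathbf{1}_Q \in \X$ for every cube $Q$. The plan is to reduce everything to Lemma \ref{lem:EE} with the same identifications as in the proof of Theorem \ref{thm:BFS}. Namely, set $\Y_j = \X_j^{\frac1m}$ for $j=1,\ldots,m$ and $\Y_{m+1} = \Y'$. Then $\prod_{j=1}^{m+1}\Y_j = \Y\Y' = L^1$ (by \cite{Schep}), and the hypothesis gives boundedness of $\M_{m\vec r,(ms)'}$ on $\Y_1\times\cdots\times\Y_{m+1}$.

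\textbf{Embedding of $\X_j$.} The second half of the proof of Lemma \ref{lem:EE} only uses that each $\Y_i$ contains a positive function of norm one (saturation), plus the boundedness of the maximal operator. So it gives, with the exponent $m r_j$ attached to $\Y_j$,
\begin{align*}
\langle g\rangle_{m r_j,Q} \le C_{j,Q}\|g\|_{\Y_j}, \qquad g \in \Y_j .
\end{align*}
Here $C_{j,Q} = \mathscr M |Q|^{-1}\prod_{i\ne j}\langle f_i\rangle_{Q}^{-1}$, which is finite because the $f_i$ are positive a.e. Applying this to $g=|f|^{1/m}$ with $f\in\X_j$ gives $\langle f\rangle_{r_j,Q}^{1/m}\le C_{j,Q}\|f\|_{\X_j}^{1/m}$, which is the first embedding.

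\textbf{Embedding of $\X$.} For $f\in\X$ with $|f|\le \prod_j f_j$ and $f_j\in\X_j$, Hölder's inequality with $\frac1r=\sum_j \frac1{r_j}$ gives
\begin{align*}
\langle f\rangle_{r,Q}\le\prod_j\langle f_j\rangle_{r_j,Q}\lesssim_Q\prod_j\|f_j\|_{\X_j}.
\end{align*}
Taking the infimum over all such factorizations yields $\X\hookrightarrow L^r(Q)$. Equivalently, one can argue through $\Y$ by a tenfold use of the same Hölder bound.

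\textbf{$\mathbf 1_Q\in\X$.} This is the main obstacle, since saturation only provides indicator functions of subsets of $Q$. It suffices to show $\mathbf 1_Q\in\Y$, because $\|\mathbf 1_Q\|_{\X}=\|\mathbf 1_Q\|_{\Y}^m$. By duality, $\|\mathbf 1_Q\|_{\Y}=\sup_{\|h\|_{\Y'}\le1}\int_Q|h|$, so I will bound $\int_Q |h|$ by $C_Q\|h\|_{\Y'}$. Apply the same argument as in Lemma \ref{lem:EE} with $j=m+1$: fix positive $f_i\in\Y_i$, $i\le m$, of norm one. Then
\begin{align*}
\langle h\rangle_{(ms)',Q}\prod_{i\le m}\langle f_i\rangle_{m r_i,Q}\,|Q| \le \mathscr M\|h\|_{\Y'} .
\end{align*}
Since $\langle h\rangle_Q\le\langle h\rangle_{(ms)',Q}$ and $\prod_{i\le m}\langle f_i\rangle_{mr_i,Q}>0$, this gives $\int_Q|h|\le C_Q\|h\|_{\Y'}$. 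Hence $\mathbf 1_Q\in\Y''=\Y$, and therefore $\mathbf 1_Q\in\X$. The same argument applied to the pair $\X_j$, $\X_j'$ would give $\mathbf 1_Q\in\X_j$ as well, if that were needed, using the appropriate duality; the statement only requires the claim for $\X$.
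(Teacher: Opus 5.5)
Your proof is correct and is essentially the paper's argument: Lemma~\ref{lem:EE} with $\Y_j=\X_j^{\frac1m}$ and $\Y_{m+1}=\Y'$ gives $\X_j\hookrightarrow L^{r_j}(Q)$ and $\int_Q|h|\le C_Q\|h\|_{\Y'}$. The latter yields $\mathbf 1_Q\in\Y''=\Y$ and hence $\mathbf 1_Q\in\X$, and H\"older's inequality gives $\X\hookrightarrow L^r(Q)$.

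Drop your closing aside, though: $\mathbf 1_Q\in\X_j$ does not follow from the hypothesis. For $m=2$, $d=1$, $\vec r=(1,1)$, $s=\infty$, $\X_1=L^2_{|x|^{-3/4}}$ and $\X_2=L^2$, the hypothesis holds because $(|x|^{-3/4},1)\in A_{(2,2)}$, yet $\mathbf 1_{[-1,1]}\notin\X_1$.
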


\begin{proof}
Fix a cube $Q$. It follows from Lemma \ref{lem:EE} that
\begin{align*}
\|f_j\|_{L^{r_j}(Q)}^{\frac1m} |Q|^{-\frac{1}{mr_j}}
= \langle |f_j|^{\frac1m} \rangle_{mr_j, Q}
\le C_{j, Q} \langle |f_j|^{\frac1m} \rangle_{\X_j^{\frac1m}} 
= C_{j, Q} \|f_j\|_{\X_j}^{\frac1m},
\end{align*}
and
\begin{align*}
\|\mathbf{1}_Q f_{m+1} \|_{L^1} |Q|^{-1}
= \langle f_{m+1} \rangle_Q 
\le \langle f_{m+1} \rangle_{Q, (ms)'} 
\le C_{m+1, Q} \|f_{m+1}\|_{\Y'}.
\end{align*}
Thus, $\X_j \hookrightarrow L^{r_j}(Q)$ and $\mathbf{1}_Q \in \Y'' = \Y$. The latter further gives $\mathbf{1}_Q \in \Y^m = \X$. In addition, given $f \in \X = \prod_{j=1}^m \X_j$, there exist $f_j \in \X_j$ such that $|f| \le \prod_{j=1}^m |f_j|$, which along with H\"older's inequality implies
\begin{align*}
\|f\|_{L^r(Q)}
\le \prod_{j=1}^m \|f_j\|_{L^{r_j}(Q)} 
\le \prod_{j=1}^m C_{j, Q}^m |Q|^{\frac{1}{r_j}} \|f_j\|_{\X_j},
\end{align*}
Thus, $\X \hookrightarrow L^r(Q)$.
\end{proof}

We also use the uniform reverse H\"older property: for fixed $h>1$
and $L<\infty$, there is $\lambda>1$ such that
\begin{equation}\label{eq:compact-scalar-RH}
 \left(\frac1{|Q|}\int_Q u^\lambda\right)^{1/\lambda}
 \le2\frac1{|Q|}\int_Q u
 \quad\text{whenever }[u]_{A_h}\le L.
\end{equation}
This follows, for example, from the Euclidean reverse H\"older theorem
in \cite[Theorem~2.3]{HPR} and the control of the $A_\infty$
characteristic by the $A_h$ characteristic. Only uniform existence of
$\lambda$ is needed here.

\subsection{Proof of Theorem \ref{thm:BFScpt}}
Using multilinear extrapolation of compactness \cite[Theorem 1.1]{COY} and the hypotheses of Theorem \ref{thm:BFScpt}, we conclude that 
\begin{align}\label{eq:wprs-1}
T: L^{q_1}_{v_1} \times \cdots \times L^{q_m}_{v_m} \longrightarrow L^q_v \, \text{ is compact}
\end{align} 
for all $\vec{\q} = (\q_1, \ldots, \q_m)$ with $(\vec{r}, s) \prec \vec{\q}$, and for all $\vec{v} \in A_{\vec{\q}, (\vec{r}, s)}$, where $\frac{1}{\q} = \sum_{j=1}^m \frac{1}{\q_j}$ and $v = \prod_{j=1}^m v_j$.

Let $\frac1r = \sum_{j=1}^m \frac{1}{r_j}$. Pick $\max\{1, r\} < \p < s$ and $\p_j = \p r_j/r$, $j=1, \ldots, m$, so that $(\vec{r}, s) \prec \vec{\p}$. Then \eqref{eq:wprs-1} applied to the exponents $\vec{\p}$ yields 
\begin{align}\label{eq:wprs-2}
T: L^{\p_1}_{w_1} \times \cdots \times L^{\p_m}_{w_m} \longrightarrow L^p_w \, \text{ is compact}
\end{align} 
for all $\vec{w} \in A_{\vec{\p}, (\vec{r}, s)}$, where $w = \prod_{j=1}^m w_j$. Furthermore, given $\vec{w} \in A_{\vec{\p}, (\vec{r}, s)}$, it was shown in \cite[Lemma 5.3]{LMO} that $w^{\frac{1}{1/\p - 1/s}} \in A_{(\frac1r - \frac1s)/(\frac{1}{\p} - \frac1s)}$, which implies 
\begin{align}\label{eq:wprs-3}
w^{\p} \in A_{(\frac1r - \frac1s)/(\frac{1}{\p} - \frac1s)} =: A_{\p_0}, 
\quad \forall \vec{w} \in A_{\vec{\p}, (\vec{r}, s)}.
\end{align}
Take $0 < a < \min\{\p/\p_0, 1\}$. Then by \eqref{eq:wprs-2}--\eqref{eq:wprs-3} and the Kolmogorov--Riesz theorem (\cite[Theorem 2.10]{COY} or \cite[Theorem 1.5]{CLSY}), given $\varepsilon > 0$, there exist $A_0 = A_0(\varepsilon) > 1$ such that 
\begin{align}
\label{eq:SN1}
\sup_{A > A_0} 
\|\mathbf{1}_{B(0, A)^c} T\|_{L^{\p_1}_{w_1} \times \cdots \times L^{\p_m}_{w_m} \to L^{\p}_w} 
& \lesssim \varepsilon, \quad \forall \vec{w} \in A_{\vec{\p}, (\vec{r}, s)},
\\
\label{eq:SN2}
\sup_{0 < \rho < A_0^{-1}} 
\|S_{\rho}^a T\|_{L^{\p_1}_{w_1} \times \cdots \times L^{\p_m}_{w_m} \to L^{\p}_w} 
& \lesssim \varepsilon, \quad \forall \vec{w} \in A_{\vec{\p}, (\vec{r}, s)},
\end{align}
where $w = \prod_{j=1}^m w_j$. Fix $A > A_0$ and $0 < \rho < A_0^{-1}$. Set
\begin{align*}
\F &:= \big\{\big(T(\vec{f}), |f_1|, \ldots, |f_m| \big): f_j \in \X_j, \, j=1, \ldots, m\big\},
\\
\F_A &:= \big\{\big(\mathbf{1}_{B(0, A)^c} T(\vec{f}), |f_1|, \ldots, |f_m| \big): f_j \in \X_j, \, j=1, \ldots, m\big\},
\\
\F_{\rho} &:= \big\{\big(S_{\rho}^a T(\vec{f}), |f_1|, \ldots, |f_m| \big): f_j \in \X_j, \, j=1, \ldots, m\big\}.
\end{align*}
In view of the boundedness hypothesis of Theorem \ref{thm:BFScpt} and \eqref{eq:SN1}--\eqref{eq:SN2}, Theorem \ref{thm:BFS} applied to $\F, \F_A, \F_{\rho}$ respectively implies
\begin{align}
\label{eq:xc-1}
\|T(\vec{f})\|_{\X} & \lesssim \prod_{j=1}^m \|f_j\|_{\X_j},
\\
\label{eq:xc-2}
\|\mathbf{1}_{B(0, A)^c} T(\vec{f})\|_{\X} 
& \lesssim \varepsilon \prod_{j=1}^m \|f_j\|_{\X_j},
\\
\label{eq:xc-3}
\|S_{\rho}^a T(\vec{f})\|_{\X} 
& \lesssim \varepsilon \prod_{j=1}^m \|f_j\|_{\X_j},
\end{align}
where the implicit constants are independent of $A$, $\rho$, $\varepsilon$, and $\vec{f}$. Hence, it follows from \eqref{eq:xc-1}--\eqref{eq:xc-3} and Lemmas \ref{lem:ccc} and \ref{lem:XQ} that $T$ is compact from $\X_1 \times \cdots \times \X_m$ to $\X$.

\section{Applications}\label{sec:app}
In this section, we present some applications of multilinear extrapolation theorems obtained above. First, Theorem \ref{thm:Tbbdd} follows from Theorem \ref{thm:BFS} and \cite[Theorem 2.22]{LMO}. Note that by the hypotheses of Theorem \ref{thm:Tbcpt} and the diagonal case in \cite[Theorem 1.3]{COY}, we obtain 
\begin{align*}
[T, \b]_{\a}: L^{p_1}_{w_1} \times \cdots \times L^{p_m}_{w_m} \to L^p_w \, \text{ is compact}
\end{align*} 
for all $\vec{p} = (p_1, \ldots, p_m)$ with $(\vec{r}, s) \prec \vec{p}$, where $\frac1p = \sum_{j=1}^m \frac{1}{p_j}$ and $w = \prod_{j=1}^m w_j$. Hence, Theorem \ref{thm:Tbcpt} is a consequence of this and Theorem \ref{thm:BFScpt}.

\subsection{Boundedness of multilinear operators}\label{sec:app-bdd}
We are going to show Theorem \ref{thm:spBFS}. The boundedness $T: \X_1 \times \cdots \times \X_m \to \X$ is a direct consequence of Theorems \ref{thm:BFS} and \ref{thm:form}, while the boundedness of commutators $[T, \b]_{\a}$ follows from Theorems \ref{thm:Tbbdd} and \ref{thm:form}. As for the specific operators appearing in Theorem \ref{thm:spBFS}, we shall treat them one by one in what follows. 

\subsubsection{\bf{Multilinear $\omega$-Calder\'{o}n--Zygmund operators}}
Let $\omega: [0, \infty) \to [0, \infty)$ be a modulus of continuity, which means that $\omega$ is increasing, subadditive and $\omega(0)=0$. Given a modulus of continuity $\omega$, we say that $\omega \in \text{Dini}$ if  
\begin{align*}
\|\omega\|_{{\rm Dini}} := \int_{0}^{1}\omega(t) \frac{dt}{t} < \infty.
\end{align*} 
An example of Dini condition is $\omega(t)=t^{\delta}$ with $\delta>0$.

We say that a function $K:\R^{d(m+1)} \setminus \{x=y_1=\cdots=y_m\} \to \C$ is an $\omega$-{\tt Calder\'{o}n}--{\tt Zygmund kernel}, if there exists a constant $A>0$ such that 
\begin{align*}
|K(x,\vec{y})| & \le  \frac{A}{\big(\sum_{j=1}^{m}|x-y_j|\big)^{mn}},
\\ 
|K(x,\vec{y}) - K(x',\vec{y})| & \le \frac{A}{\big(\sum_{j=1}^{m}|x-y_j|\big)^{mn}} 
\omega\bigg(\frac{|x-x'|}{\sum_{j=1}^{m}|x-y_j|}\bigg),
\end{align*}
whenever $|x-x'| \leq \frac12 \max\limits_{1\leq j \leq m}|x-y_j|$, and for each $i=1,\ldots,m$, 
\begin{align*}
|K(x,\vec{y}) - K(x,y_1,\ldots,y'_i,\ldots,y_m)| & \le \frac{A}{\big(\sum_{j=1}^{m}|x-y_j|\big)^{mn}} 
\omega\bigg(\frac{|y_i-y'_i|}{\sum_{j=1}^{m}|x-y_j|}\bigg),
\end{align*}
whenever
$|y_i-y_i'| \leq \frac12 \max\limits_{1\leq j \leq m}|x-y_j|$.

An $m$-linear operator $T: \mathscr{S}(\Rd) \times \cdots \times \mathscr{S}(\Rd) \to \mathscr{S}'(\Rd)$ is called an $\omega$-{\tt Calder\'{o}n}--{\tt Zygmund operator} if there exists an $\omega$-Calder\'{o}n--Zygmund kernel $K$ such that 
\begin{align*}
T(\vec{f})(x) =\int_{(\Rd)^m} K(x, \vec{y}) f_1(y_1)\cdots f_m(y_m) d\vec{y},  
\end{align*}
whenever $x \not\in \bigcap_{i=1}^m \supp(f_i)$ and $\vec{f}=(f_1,\ldots,f_m) \in \mathscr{C}_c^{\infty}(\Rd) \times \cdots \times \mathscr{C}_c^{\infty}(\Rd)$, and $T$ can be boundedly extended from $L^{q_1}(\Rd) \times \cdots \times L^{q_m}(\Rd)$ to $L^q(\Rd)$ for some $\frac1q=\frac{1}{q_1}+\cdots+\frac{1}{q_m}$ with $1<q_1,\ldots,q_m<\infty$. 

If $\omega(t)=t^{\delta}$ with $\delta>0$, an $\omega$-Calder\'{o}n-Zygmund operator $T$ is called a standard Calder\'{o}n--Zygmund operator, which was studied by Grafakos and Torres \cite{GT}. For the general $\omega$, the linear $\omega$-CZO was introduced by Yabuta in \cite{Y}, while it was extended by Maldonado and Naibo \cite{MN} to the bilinear case. 

Invoking the pointwise sparse domination \cite[Theorem 3.6]{DHL} and Theorem \ref{thm:spBFS} for abstract operators, we obtain the result below.

\begin{theorem}\label{thm:CZO}
Let $T$ be an m-linear $\omega$-Calder\'{o}n--Zygmund operator with $\omega \in \text{Dini}$. Let $\X_1, \ldots, \X_m$ be Banach function spaces such that 
\begin{align*}
\M_{(m, \ldots, m), 1}: \X_1^{\frac1m} \times \cdots \times \X_m^{\frac1m}\times \Y' \to L^1 \text{ is bounded},
\end{align*} 
where $\Y = \X^{\frac1m}$ and $\X = \prod_{j=1}^m \X_j$. Then both $T$ and $[T, \b]_{\a}$ are bounded from $\X_1 \times \cdots \times \X_m$ to $\X$ for all $\a \in \N^m$ and $\b \in \BMO^m$.
\end{theorem}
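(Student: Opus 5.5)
The proof will be a reduction to Theorem \ref{thm:spBFS} in the full-range case $\vec{r} = (1, \ldots, 1)$ and $s = \infty$. The first check is that the hypothesis on the Banach function spaces matches. With $\vec{r} = (1, \ldots, 1)$ and $s = \infty$ we get $m\vec{r} = (m, \ldots, m)$ and $(ms)' = (\infty)' = 1$. So the assumed boundedness of $\M_{(m, \ldots, m), 1}: \X_1^{\frac1m} \times \cdots \times \X_m^{\frac1m} \times \Y' \to L^1$ is exactly the hypothesis of Theorem \ref{thm:spBFS}. The side condition $\frac1s < \sum_{j=1}^m \frac{1}{r_j}$ there reads $0 < m$, which holds trivially.

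The main step is to verify the sparse form bound
\begin{align*}
|\langle T(\vec{f}), f_{m+1} \rangle| \lesssim \Lambda_{(1, \ldots, 1), 1}^*(\vec{f}, f_{m+1}), \qquad f_1, \ldots, f_m \in L_c^{\infty}(\Rd).
\end{align*}
For this I will invoke the pointwise sparse domination for multilinear $\omega$-Calder\'on--Zygmund operators with $\omega \in$ Dini from \cite[Theorem 3.6]{DHL}. It gives, for each such $\vec{f}$, finitely many (at most $3^d$) sparse families $\S_k$ with
\begin{align*}
|T(\vec{f})(x)| \lesssim \sum_{k} \sum_{Q \in \S_k} \prod_{j=1}^m \langle f_j \rangle_Q \mathbf{1}_Q(x),
\end{align*}
where the constant depends on $A$, $\|\omega\|_{\mathrm{Dini}}$ and the unweighted bound of $T$. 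Pairing with $|f_{m+1}|$ and integrating turns each indicator into $|Q| \langle f_{m+1} \rangle_Q$, so each sparse family contributes at most $\Lambda^*_{(1,\ldots,1),1}(\vec{f}, f_{m+1})$. If the sparseness constant from \cite{DHL} differs from the fixed $\gamma$ in the definition of $\Lambda^*$, I will fix $\gamma$ small enough, or use the standard fact that changing the sparseness parameter only changes the form by a constant.

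Theorem \ref{thm:spBFS} then yields the boundedness of $T$ and of $[T, \b]_{\a}$ from $\X_1 \times \cdots \times \X_m$ to $\X$ for all $\a \in \N^m$ and $\b \in \BMO^m$. Alternatively, combining Theorem \ref{thm:form} with Theorems \ref{thm:BFS} and \ref{thm:Tbbdd}, one takes any fixed $\vec{p}$ with $(\vec{1}, \infty) \prec \vec{p}$, say $p_j = 2m$, so that $p = 2 < \infty$.

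The only delicate point is the a priori restriction to $L_c^\infty$ functions. The extrapolation statement in Theorem \ref{thm:BFS} is phrased over a family $\F$, so I will apply it to tuples with $f_j \in L_c^\infty \cap \X_j$. I will then extend to all of $\X_1 \times \cdots \times \X_m$ using the fact, implicit in how Theorem \ref{thm:spBFS} is stated, that the operator is defined on $\X_j$ by density/limiting arguments, which is no worse than the convention the paper already adopts there.
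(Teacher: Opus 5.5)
Your proposal is correct and is the paper's argument: the paper also proves this theorem by feeding the pointwise sparse domination of \cite[Theorem 3.6]{DHL} into Theorem \ref{thm:spBFS} with $\vec{r} = (1, \ldots, 1)$ and $s = \infty$. Your checks that $(ms)' = 1$, that $\frac1s < \sum_j \frac{1}{r_j}$ holds trivially, and that the sparseness parameter and the $L^\infty_c$ restriction are harmless just make explicit what the paper leaves implicit.
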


\subsubsection{\bf{Multilinear pseudo-differential operators}}
Given a function $\sigma$ on $\Rd \times \R^{dm}$, the $m$-linear pseudo-differential operators $T_{\sigma}$ is defined by
\begin{align*}
T_a(\vec{f})(x) := \int_{\R^{dm}} a(x, \vec{\xi})
e^{2 \pi i x \cdot (\xi_1 + \cdots + \xi_m)} \widehat{f}_1(\xi_1) \cdots \widehat{f}_m(\xi_m) \, d\vec{\xi},
\end{align*}
for all $f_j \in \mathscr{S}(\Rd)$, $j=1,\ldots,m$, where $\widehat{f}$ is the Fourier transform of $f$. Given $\tau \in \R$ and $\rho, \delta \in [0, 1]$, we say $\sigma \in S_{\rho,\delta}^{\tau}$ if 
\begin{align*}
\big| \partial_{x}^{\alpha} \partial_{\xi_1}^{\beta_1} \cdots
\partial_{\xi_m}^{\beta_m}  a(x,\vec{\xi}) \big|
\leq C_{\alpha, \vec{\beta}} (1+ |\vec{\xi}|)^{\tau - \rho \sum_{j=1}^m |\beta_j| + \delta|\alpha|}, 
\qquad x \in \Rd.
\end{align*}
for all multi-indices $\alpha, \beta_1, \ldots, \beta_m$.

Then the following result is a consequence of \cite[Proposition 4.1]{CXY} and Theorem \ref{thm:spBFS} for abstract operators. 

\begin{theorem}\label{thm:pse}
Let $0 \le \rho, \delta \le 1$, $\tau < - md(1-\rho)$, and $\sigma \in S_{\rho, \delta}^{\tau}$. Let $\X_1, \ldots, \X_m$ be Banach function spaces such that 
\begin{align*}
\M_{(m, \ldots, m), 1}: \X_1^{\frac1m} \times \cdots \times \X_m^{\frac1m}\times \Y' \to L^1 \text{ is bounded},
\end{align*} 
where $\Y = \X^{\frac1m}$ and $\X = \prod_{j=1}^m \X_j$. Then both $T_{\sigma}$ and $[T_{\sigma}, \b]_{\a}$ are bounded from $\X_1 \times \cdots \times \X_m$ to $\X$ for all $\a \in \N^m$ and $\b \in \BMO^m$.
\end{theorem}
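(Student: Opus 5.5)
The plan is to deduce Theorem \ref{thm:pse} directly from Theorem \ref{thm:spBFS} in the full-range case $\vec{r} = (1, \ldots, 1)$, $s = \infty$. In that case $m\vec{r} = (m, \ldots, m)$ and $(ms)' = 1$, so the hypothesis of Theorem \ref{thm:spBFS} is exactly the assumed boundedness of $\M_{(m, \ldots, m), 1}$. The side condition $\frac1s < \sum_{j=1}^m \frac{1}{r_j}$ reads $0 < m$, which trivially holds. Hence the only thing to verify is the sparse form domination
\begin{align*}
|\langle T_{\sigma}(\vec{f}), f_{m+1} \rangle| \lesssim \Lambda_{\vec{1}, 1}^*(\vec{f}, f_{m+1}), \qquad f_1, \ldots, f_m \in L_c^{\infty}(\Rd).
\end{align*}

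For this I would invoke \cite[Proposition 4.1]{CXY}. Under $\sigma \in S^{\tau}_{\rho,\delta}$ with $0 \le \rho, \delta \le 1$ and $\tau < -md(1-\rho)$, it should give that $T_{\sigma}$ admits a (pointwise, or at least bilinear-form) sparse domination by averages $\langle f_j \rangle_Q$. Possibly it also gives the underlying kernel estimates: an $\omega$-Calder\'on--Zygmund kernel with Dini $\omega$, together with unweighted $L^{q_1} \times \cdots \times L^{q_m} \to L^q$ boundedness.

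In the second case, one passes through Theorem \ref{thm:CZO}, or through the sparse bound of \cite[Theorem 3.6]{DHL}. Either route produces pointwise domination by finitely many (at most $3^d$) sparse operators $\sum_{Q \in \S} \prod_j \langle f_j \rangle_Q \mathbf{1}_Q$. Pairing with $f_{m+1}$ turns this into the form $\sum_{Q} |Q| \prod_{j=1}^{m+1} \langle f_j \rangle_Q$, which is bounded by $\Lambda^*_{\vec{1},1}$ (up to adjusting the sparseness constant $\gamma$, a routine step). With the domination in hand, Theorem \ref{thm:spBFS} yields the boundedness of both $T_\sigma$ and $[T_\sigma, \b]_{\a}$ from $\X_1 \times \cdots \times \X_m$ to $\X$ for all $\a \in \N^m$ and $\b \in \BMO^m$. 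The commutator part comes via Theorems \ref{thm:Tbbdd} and \ref{thm:form}.

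The main obstacle is checking that the cited result really covers the whole symbol range stated, including $\rho < 1$ and $\delta = 1$, and that its output matches the definition of $\Lambda^*$ here with a compatible $\gamma$. If \cite{CXY} only provides kernel regularity with a non-Dini modulus in some subrange, I would instead extract the weighted bound at one exponent for all $A_{\vec{p},(\vec{1},\infty)}$ weights and apply Theorems \ref{thm:BFS} and \ref{thm:Tbbdd} directly. That alternative bypasses the form domination.
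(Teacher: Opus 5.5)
Your proposal is correct and is essentially the paper's argument: the paper deduces Theorem~\ref{thm:pse} in one line from \cite[Proposition 4.1]{CXY} combined with Theorem~\ref{thm:spBFS} in the full-range case $\vec{r}=(1,\ldots,1)$, $s=\infty$, where $(ms)'=1$, exactly as you do. Your extra steps only spell out what the paper leaves implicit: passing from pointwise sparse bounds over finitely many dyadic lattices to $\Lambda^*_{\vec{1},1}$, and handling the commutators via Theorems~\ref{thm:Tbbdd} and~\ref{thm:form}.
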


On the other hand, we can enlarge the range of $\tau$, but at the cost of strengthening the bounded of multilinear maximal operators. 
Set $\Gamma(r_1, r_2, s) := \frac{1}{\min\{r_1, 2\}} + \frac{1}{\min\{r_2, 2\}} - \frac{1}{\max\{s, 2\}}$ for $r_1, r_2, s \in (0, \infty)$. Combining \cite[Theorem 1.4]{CDY} with Theorem \ref{thm:spBFS} for abstract operators, we derive the result given below.

\begin{theorem}\label{thm:psee}
Let $0 < \rho \le 1$, $0 \le \delta \le \rho$, and $\delta < 1$. Let $1 < r_1, r_2, s < \infty$ with $\frac1s < \frac{1}{r_1} + \frac{1}{r_2}$, and $\tau \le - d(1-\rho) \Gamma(r_1, r_2, s)$. Let $\X_1, \ldots, \X_m$ be Banach function spaces such that 
\begin{align*}
\M_{2r_1, 2r_2, (2s)'}: \X_1^{\frac1m} \times \X_2^{\frac1m} \times \Y' \to L^1 \text{ is bounded},
\end{align*} 
where $\Y = \X^{\frac12}$ and $\X = \X_1 \X_2$. Then for every $\sigma \in S_{\rho, \delta}^{\tau}$, both $T_{\sigma}$ and $[T_{\sigma}, \b]_{\a}$ are bounded from $\X_1 \times \X_2$ to $\X$ for all $\a \in \N^2$ and $\b \in \BMO^2$.
\end{theorem}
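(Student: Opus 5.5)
The plan is to reduce Theorem~\ref{thm:psee} to Theorem~\ref{thm:spBFS}, applied with $m=2$, $\vec{r}=(r_1,r_2)$ and the given $s$. The hypothesis $\frac1s<\frac1{r_1}+\frac1{r_2}$ is exactly the side condition $\frac1s<\sum_j \frac1{r_j}$ required there. The maximal-function assumption $\M_{2r_1,2r_2,(2s)'}:\X_1^{\frac12}\times\X_2^{\frac12}\times\Y'\to L^1$ is the case $m=2$ of the hypothesis $\M_{m\vec r,(ms)'}$ in Theorem~\ref{thm:spBFS}. Once we have a sparse form bound
\begin{align*}
|\langle T_\sigma(f_1,f_2), f_3\rangle| \lesssim \Lambda^*_{(r_1,r_2),s'}(f_1,f_2,f_3),
\qquad f_1,f_2\in L^\infty_c(\Rd),
\end{align*}
Theorem~\ref{thm:spBFS} yields both claims simultaneously: the bound for $T_\sigma$ itself, and, through Theorem~\ref{thm:Tbbdd} combined with the weighted estimates of Theorem~\ref{thm:form}, the bound for $[T_\sigma,\b]_{\a}$ for every $\a\in\N^2$ and $\b\in\BMO^2$.

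The key step is therefore to obtain this sparse bound from \cite[Theorem 1.4]{CDY}. As I read it, that theorem gives exactly such a domination for $\sigma\in S^{\tau}_{\rho,\delta}$ with $0<\rho\le1$, $0\le\delta\le\rho$, $\delta<1$ and $\tau\le -d(1-\rho)\Gamma(r_1,r_2,s)$. The work here is to check that their exponents line up with ours. They may state the form with averages $\langle f_1\rangle_{r_1}\langle f_2\rangle_{r_2}\langle f_3\rangle_{s'}$ or with a dual exponent convention, so we must confirm that their third exponent is our $s'$, i.e.\ that the form is $\Lambda^*_{\vec r,s'}$. We must also verify that their sparse collections can be taken inside the $3^d$ lattices $\mathscr D^k$ with a fixed sparsity constant $\gamma$. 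This is harmless: any sparse family of arbitrary cubes can be moved into these lattices by Lemma~\ref{lem:dyadic}, at the cost of an enlarged constant, since $\ell(P_Q)\le 3\ell(Q)$ makes each average over $Q$ comparable to (at most a constant times) the average over $P_Q$.

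A further point needs checking. If \cite{CDY} establishes the sparse bound only for a restricted exponent range, or only for the non-endpoint case $1<r_j$, our hypothesis $1<r_1,r_2,s<\infty$ already puts us in that range. Should the strict inequality $\tau<\ldots$ be needed instead of $\le$, I would revisit their statement. I expect the endpoint $\tau=-d(1-\rho)\Gamma$ to be covered there, since the theorem statement here permits equality.

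The main obstacle is this precise matching of the hypotheses and the form in \cite{CDY} with $\Lambda^*_{\vec r,s'}$. Everything after that is a direct citation of Theorem~\ref{thm:spBFS}.
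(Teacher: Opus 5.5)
Your proposal is correct and follows the paper's own argument: the paper obtains Theorem~\ref{thm:psee} by feeding the sparse bound of \cite[Theorem 1.4]{CDY} into the abstract Theorem~\ref{thm:spBFS} with $m=2$, which in turn rests on Theorems~\ref{thm:BFS}, \ref{thm:Tbbdd} and \ref{thm:form}. Your concern about moving sparse families into the lattices $\mathscr{D}^k$ is unnecessary, because $\Lambda^*_{\vec{r}, s'}$ is defined as a supremum over sparse subfamilies of an arbitrary dyadic lattice, and the $\mathscr{D}^k$ are used only in the proof of Theorem~\ref{thm:fac}.
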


\subsubsection{\bf{Bilinear Hilbert transform}}
The {\tt bilinear Hilbert transform} is defined as
\begin{align*}
BH(f_1, f_2)(x) := \mathrm{p.v.} \, \int_{\R} f_1(x-t) f_2(x+t) \, \frac{dt}{t}. 
\end{align*}
It is an integral operator falling outside of the theory of bilinear Calder\'{o}n--Zygmund operators. Lacey and Thiele \cite{LT97, LT99} showed that $BH$ is bounded $L^{p_1}(\Rd) \times L^{p_2}(\Rd) \to L^p(\Rd)$ for all $p_1, p_2 \in (1, \infty]$ with $\frac1p = \frac{1}{p_1} + \frac{1}{p_2} < \frac32$. The sparse domination for $BH$ was first established in \cite{CDO}: 
\begin{align}\label{eq:BHsp}
|\langle BH(f_1, f_2), \, f_3 \rangle|
\lesssim \Lambda_{r_1, r_2, s'}^*(f_1, f_2, f_3),
\end{align}
for all $f_1, f_2, f_3 \in L_c^{\infty}(\Rd)$, where $r_1, r_2, s \in (1, \infty)$ with 
\begin{align}\label{eq:BHrr}
\frac{1}{\min\{r_1, 2\}} + \frac{1}{\min\{r_2, 2\}} + \frac{1}{\min\{s', 2\}} < 2.
\end{align}
More general vector-valued sparse domination for $BH$ was given in \cite{BM}.

A direct consequence of Theorem \ref{thm:spBFS} and \eqref{eq:BHsp} is the following result.

\begin{theorem}\label{thm:BH}
Let $r_1, r_2, s \in (1, \infty)$ satisfy \eqref{eq:BHrr}. Let $\X_1, \ldots, \X_m$ be Banach function spaces such that 
\begin{align*}
\M_{2r_1, 2r_2, (2s)'}: \X_1^{\frac12} \times \X_2^{\frac12}\times \Y' \to L^1 \text{ is bounded},
\end{align*} 
where $\Y = \X^{\frac12}$ and $\X = \X_1 \X_2$. Then both $BH$ and $[BH, \b]_{\a}$ are bounded from $\X_1 \times \X_m$ to $\X$ for all $\a \in \N^2$ and $\b \in \BMO^2$.
\end{theorem}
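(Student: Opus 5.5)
This is a direct application of Theorem \ref{thm:spBFS} with $m=2$, $d=1$, $\vec{r}=(r_1,r_2)$ and exponent $s$. The plan is to check its two hypotheses: the sparse bound \eqref{eq:BHsp} and the parameter condition $\frac1s<\frac1{r_1}+\frac1{r_2}$.

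The sparse bound is exactly \eqref{eq:BHsp}, available from \cite{CDO} for all $r_1,r_2,s\in(1,\infty)$ satisfying \eqref{eq:BHrr}. Hence
\[
|\langle BH(f_1,f_2),f_3\rangle|\lesssim\Lambda^*_{(r_1,r_2),s'}(f_1,f_2,f_3),
\qquad f_1,f_2,f_3\in L^\infty_c(\mathbb{R}).
\]
This is the form required in Theorem \ref{thm:spBFS}, with third exponent $r_{m+1}=s'$.

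The maximal-operator hypothesis of Theorem \ref{thm:spBFS} with $m=2$ reads
\[
\M_{2\vec r,(2s)'}:\X_1^{\frac12}\times\X_2^{\frac12}\times\Y'\to L^1 \text{ is bounded},
\]
which is precisely the hypothesis of Theorem \ref{thm:BH}.

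The only remaining item is $\frac1s<\frac1{r_1}+\frac1{r_2}$, which I expect to be the sole (mild) point needing an argument. I would derive it from \eqref{eq:BHrr} as follows. Since $\frac{1}{\min\{s',2\}}\ge\frac1{s'}$, condition \eqref{eq:BHrr} gives
\[
\frac{1}{\min\{r_1,2\}}+\frac{1}{\min\{r_2,2\}}<2-\frac1{s'}=1+\frac1s.
\]
This inequality is in terms of $\min\{r_j,2\}$, which is not yet the claim, so split into cases.

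\begin{itemize}
\item If $r_1,r_2\le 2$, the left side equals $\frac1{r_1}+\frac1{r_2}\ge 1>\frac1s$. Here the inequality comes directly from $r_j\le 2$, not from \eqref{eq:BHrr}.
\item If, say, $r_1>2$, then \eqref{eq:BHrr} forces $\frac1{\min\{r_2,2\}}<\frac32$, which holds anyway, and the desired inequality need not follow from \eqref{eq:BHrr} alone.
\end{itemize}

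In the second case I would try to restrict to the admissible range used in \cite{CDO}, where the sparse tuple is implicitly taken in the Lacey–Thiele-type region that the weighted theory of \cite{LMO} uses for $BH$; there $\frac1{r_1}+\frac1{r_2}>\frac1s$ is part of the admissibility. Failing that, I would argue as follows. The form $\Lambda^*_{\vec r,s'}$ only increases when some $r_j$ is decreased, since $\langle f\rangle_{Q,r}$ is increasing in $r$. So I may replace $(r_1,r_2)$ by smaller values with $\frac1{r_1}+\frac1{r_2}>\frac1s$ while keeping the sparse bound true. The maximal hypothesis for the original exponents would then need monotonicity in the opposite direction, and that is the main obstacle. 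I expect the intended reading is the first option, so that Theorem \ref{thm:spBFS} applies verbatim.

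Once the condition is in hand, Theorem \ref{thm:spBFS} yields the boundedness of both $BH$ and $[BH,\b]_{\a}$ from $\X_1\times\X_2$ to $\X$, for all $\a\in\N^2$ and $\b\in\BMO^2$. For the commutator this goes through Theorem \ref{thm:form} and Theorem \ref{thm:Tbbdd}.
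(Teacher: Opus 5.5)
You follow the paper's route: the paper states Theorem~\ref{thm:BH} as a direct consequence of Theorem~\ref{thm:spBFS} and \eqref{eq:BHsp}, and never checks the side condition $\frac1s<\frac1{r_1}+\frac1{r_2}$. You are right that this condition does not follow from \eqref{eq:BHrr}; for example $r_1=r_2=100$, $s=2$ satisfies \eqref{eq:BHrr} but violates it. However, your proposal leaves this step unresolved, and the fallback you sketch has both monotonicities backwards. Since $\langle f\rangle_{Q,r}$ is nondecreasing in $r$, lowering $r_j$ makes $\Lambda^*_{\vec r,s'}$ \emph{smaller}. A sparse bound at the new exponents is therefore a stronger statement and is not inherited by monotonicity. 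By contrast, lowering $r_j$ makes $\M_{2\vec r,(2s)'}$ pointwise smaller, so the maximal hypothesis \emph{is} inherited. Your first option is not an argument either: the range in which \eqref{eq:BHsp} is recorded is \eqref{eq:BHrr} itself, and it contains tuples violating the inequality.

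The missing step is short. Put $\tilde r_j:=\min\{r_j,2\}\in(1,2]$. Then:
(i) the left-hand side of \eqref{eq:BHrr} is unchanged, so \eqref{eq:BHsp} holds with $\Lambda^*_{\tilde r_1,\tilde r_2,s'}$ directly from \cite{CDO}, not by monotonicity;
(ii) $\frac1{\tilde r_1}+\frac1{\tilde r_2}\ge 1>\frac1s$;
(iii) $\M_{2\tilde r_1,2\tilde r_2,(2s)'}\le \M_{2r_1,2r_2,(2s)'}$ pointwise, so the maximal hypothesis holds for the new exponents.
Theorem~\ref{thm:spBFS} applied with $(\tilde r_1,\tilde r_2,s)$ then gives boundedness of $BH$ and $[BH,\b]_{\a}$.

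Alternatively, the maximal hypothesis itself forces the inequality, so the theorem is vacuous otherwise. The reciprocal exponents of $\M_{2r_1,2r_2,(2s)'}$ sum to $1+\frac12\big(\frac1{r_1}+\frac1{r_2}-\frac1s\big)$. If this sum is $\le 1$, take a.e.\ positive $h_j$ in the three spaces. Averaging over large cubes containing both $x$ and a fixed unit cube gives $\M(h_1,h_2,h_3)(x)\gtrsim |x|^{-1}$ for large $|x|$, which is not integrable, contradicting boundedness into $L^1$.
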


\subsubsection{\bf{Multilinear rough singular integrals}}
Given $\Omega \in L^q(\bS^{md-1})$ with $q \in (1, \infty]$ and $\int_{\bS^{md-1}} \Omega\, d\sigma=0$, we define the {\tt multilinear rough singular integral operator} $T_{\Omega}$ by 
\begin{equation*}
T_{\Omega}(\vec{f})(x) 
:= \mathrm{p.v.} \int_{\R^{md}} K_{\Omega}(x-\vec{y}) \prod_{i=1}^m f_i(y_i) \, d\vec{y},  
\end{equation*}
where the rough kernel $K_{\Omega}$ is given by 
\begin{equation*}
K_{\Omega}(\vec{y}) 
:= \frac{\Omega(\vec{y}/|\vec{y}|)}{|\vec{y}|^{md}} 
\quad \text{with} \quad \vec{y} = (y_1, \ldots, y_m). 
\end{equation*}
Given $N \ge 2$ and $q \in (1, \infty]$, define the regions
\begin{align*}
\H_N^1(q) 
& := \bigg\{\vec{a} \in (0, 1)^N: 
\sum_{i \in I} a_i + \frac{|I| - 1}{q} < |I|, \, \, 
\forall \varnothing \ne I \subsetneq \{1, \ldots, N \}\bigg\},
\\
\H_N^d(q) 
& := \bigg\{\vec{a} \in (0, 1)^N: 
\sum_{i \in I} a_i + \frac{|I| - 1}{q} < |I|, \, \, 
\forall \varnothing \ne I \subseteq \{1, \ldots, N \}\bigg\},
\end{align*}
for every $d \ge 2$. Recent years have witnessed significant progress in multilinear rough singular integrals, see \cite{DS, GHH, GHHP23, GHHP24}.

By means of \cite[Theorem 1.2]{CDY} and Theorem \ref{thm:spBFS} for abstract operators, we establish boundedness of $T_{\Omega}$  and $[T_{\Omega}, \b]_{\a}$ on Banach function spaces.

\begin{theorem}\label{thm:rough}
Let $q \in (1, \infty]$ and $\Omega \in L^q(\bS^{md-1})$ with $\int_{\bS^{md-1}} \Omega\, d\sigma=0$. Let $(\vec{r}, s) \in (1, \infty)^{m+1}$ be such that $\frac1s < \sum_{j=1}^m \frac{1}{r_j}$ and $\big(\frac{1}{r_1}, \ldots, \frac{1}{r_m}, \frac{1}{s'} \big) \in \H_{m+1}^d(q)$. Let $\X_1, \ldots, \X_m$ be Banach function spaces such that 
\begin{align*}
\M_{m \vec{r}, (ms)'}: \X_1^{\frac1m} \times \cdots \times \X_m^{\frac1m}\times \Y' \to L^1 \text{ is bounded},
\end{align*} 
where $\Y = \X^{\frac1m}$ and $\X = \prod_{j=1}^m \X_j$. Then both $T_{\Omega}$ and $[T_{\Omega}, \b]_{\a}$ are bounded from $\X_1 \times \cdots \times \X_m$ to $\X$ for all $\a \in \N^m$ and $\b \in \BMO^m$.
\end{theorem}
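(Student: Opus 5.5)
The statement is Theorem \ref{thm:rough}. It is a direct corollary of Theorem \ref{thm:spBFS}, so the plan is to verify that theorem's hypotheses for $T = T_{\Omega}$ with the given $(\vec{r}, s)$.

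The condition $\frac1s < \sum_{j=1}^m \frac1{r_j}$ is assumed outright. The maximal-operator hypothesis
\begin{align*}
\M_{m \vec{r}, (ms)'}: \X_1^{\frac1m} \times \cdots \times \X_m^{\frac1m}\times \Y' \to L^1
\end{align*}
is bounded, and this is exactly the assumption of Theorem \ref{thm:spBFS}. What remains is the sparse form domination
\begin{align*}
|\langle T_{\Omega}(\vec{f}), \, f_{m+1} \rangle|
\lesssim \Lambda_{\vec{r}, s'}^*(\vec{f}, f_{m+1}),
\qquad f_1, \ldots, f_m \in L_c^{\infty}(\Rd).
\end{align*}

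For this I would invoke \cite[Theorem 1.2]{CDY}. That theorem gives a sparse bound for multilinear rough singular integrals with $\Omega \in L^q(\bS^{md-1})$ of mean zero, valid whenever the exponent tuple $\big(\frac1{r_1}, \ldots, \frac1{r_m}, \frac1{s'}\big)$ lies in the admissible region $\H^d_{m+1}(q)$.

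The only point that needs care is matching conventions. I need to check two things.
\begin{enumerate}
\item The region in \cite{CDY} is parametrized by the reciprocals $\frac1{r_j}$ and the dual exponent $\frac1{s'}$ in the same way as in $\H_{m+1}^d(q)$ above. It may instead be parametrized by $1 - \frac1{r_j}$ or similar, in which case the statement as written must correspond to their region after the change of variables.
\item Their sparse form uses $L^{r_j}$-averages $\langle f_j \rangle_{Q, r_j}$ and $\langle f_{m+1} \rangle_{Q, s'}$ over a $\gamma$-sparse family. Then it is dominated by $\Lambda^*_{\vec{r}, s'}$, possibly after adjusting the sparseness constant $\gamma$ and using the three-lattice Lemma \ref{lem:dyadic} to pass from general cubes to dyadic ones.
\end{enumerate}
Also, the case $d=1$ would use $\H^1$, but the statement uses $\H^d$ throughout, and since $\H^d \subset \H^1$ this causes no harm.

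With the domination in hand, Theorem \ref{thm:spBFS} immediately gives boundedness of $T_{\Omega}$ and of $[T_{\Omega}, \b]_{\a}$ from $\X_1 \times \cdots \times \X_m$ to $\X$ for all $\a \in \N^m$ and $\b \in \BMO^m$.

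The main, and essentially only, obstacle is this bookkeeping of the exponent region and the form of the sparse bound in \cite{CDY}. The analytic content is entirely contained in the cited sparse domination and in Theorems \ref{thm:form}, \ref{thm:BFS} and \ref{thm:Tbbdd}, which Theorem \ref{thm:spBFS} packages.
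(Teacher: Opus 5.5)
Your proposal is correct and is the paper's argument: the paper proves Theorem \ref{thm:rough} by taking the sparse form domination $|\langle T_{\Omega}(\vec{f}), f_{m+1}\rangle| \lesssim \Lambda^*_{\vec{r}, s'}(\vec{f}, f_{m+1})$ from \cite[Theorem 1.2]{CDY}, valid for $(\frac1{r_1}, \ldots, \frac1{r_m}, \frac1{s'}) \in \H^d_{m+1}(q)$, and feeding it into Theorem \ref{thm:spBFS}. The convention checks you list are the only bookkeeping involved, and your $d=1$ remark is harmless but unnecessary, since for $d=1$ the region $\H^d_{m+1}(q)$ is simply $\H^1_{m+1}(q)$.
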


\subsubsection{\bf{Bilinear oscillatory Fourier multipliers}}
Let $\phi \in \mathscr{C}^{\infty}(\R^{2d})$ be a radial function satisfying $\phi(\vec{\xi}) = 0$ for $|\vec{\xi}| \le 1$, and $\phi(\vec{\xi}) = 1$ for $|\vec{\xi}| \ge 2$. Given $a > 0$, $a \ne 1$, and $b > 0$, denote
\begin{equation*}
m_{a, b}(\vec{\xi})
:= \phi(\vec{\xi}) |\vec{\xi}|^{-b} e^{i |\vec{\xi}|^a}, \qquad \vec{\xi} = (\xi_1, \xi_2) \in \R^{2d}.
\end{equation*}
For a sequence $c = (c_k)_{k \in \Z}$ with $\sup_k |c_k| \le 1$, the {\tt bilinear oscillatory Fourier multiplier} is defined by
\begin{align*}
T_{m_{a, b}^c}(\vec{f})(x)
&:= \int_{\R^{2d}} m^{\c}_{a, b}(\vec{\xi}) 
e^{2 \pi i x \cdot (\xi_1 + \xi_2)} 
\widehat{f}_1(\xi_1) \, \widehat{f}_2(\xi_2) \, d\vec{\xi},
\end{align*}
where $m_{a, b}^c(\vec{\xi}) := \sum_{k \in \Z} c_k \, m_{a, b} (2^k \vec{\xi})$.

We achieve boundedness of bilinear oscillatory Fourier multipliers by \cite[Theorem 1.5]{CDY} and Theorem \ref{thm:spBFS} for abstract operators.
\begin{theorem}\label{thm:osc}
Let $a>0$, $a \ne 1$, $1 < r < 2$, and $b > a \, d/r$. Let $\X_1, \X_2$ be Banach function spaces such that 
\begin{align*}
\M_{2r, 2r, (2r)'}: \X_1^{\frac12} \times \X_2^{\frac12} \times \Y' \to L^1 \text{ is bounded},
\end{align*} 
where $\Y = \X^{\frac12}$ and $\X = \X_1 \X_2$. Then both $T_{m_{a, b}^c}$ and $[T_{m_{a, b}^c}, \b]_{\a}$ are bounded from $\X_1 \times  \X_2$ to $\X$ for all $\a \in \N^2$ and $\b \in \BMO^2$, uniformly in $c = (c_k)_{k \in \Z}$.
\end{theorem}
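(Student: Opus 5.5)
This result follows by chaining two earlier results: the sparse bound for bilinear oscillatory multipliers from \cite[Theorem 1.5]{CDY}, and the abstract Theorem \ref{thm:spBFS}. The theorem is the case $m=2$ with $\vec{r}=(r,r)$ and $s=r$.

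First I will recall what \cite[Theorem 1.5]{CDY} gives. For $a>0$, $a\ne1$, $1<r<2$ and $b>a\,d/r$, it yields a sparse form domination
\begin{align*}
|\langle T_{m_{a,b}^c}(f_1,f_2), f_3\rangle| \lesssim \Lambda^*_{(r,r),r}(f_1,f_2,f_3),
\end{align*}
uniformly in sequences $c$ with $\sup_k|c_k|\le1$. I must match the exponent of the third function with the notation $\Lambda^*_{\vec{r},s'}$ of Theorem \ref{thm:spBFS}. If the third exponent in \cite{CDY} is $r$, then $s'=r$, so $s=r'$, and the maximal operator in Theorem \ref{thm:spBFS} becomes $\M_{2r,2r,(2r')'}$. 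The stated hypothesis is $\M_{2r,2r,(2r)'}$, which corresponds instead to $s=r$, that is, to a third exponent $r'$. So the first careful step is to read off from \cite{CDY} exactly which triple $(r_1,r_2,r_3)$ appears, and to set $s:=r_3'$. The proof is written assuming this gives $s=r$, in agreement with the hypothesis. If it does not, the hypothesis would have to be adjusted, or the form compared through monotonicity $\langle f\rangle_{Q,t}\le\langle f\rangle_{Q,t'}$ for $t\le t'$.

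Next I will check the side condition $\frac1s<\frac1{r_1}+\frac1{r_2}$ of Theorem \ref{thm:spBFS}. With $r_1=r_2=r$ and $s=r$ it reads $\frac1r<\frac2r$, which holds. If instead $s=r'$, the condition is $1-\frac1r<\frac2r$, i.e.\ $r<3$, which also holds since $r<2$. The exponents also lie in the required ranges: $\vec{r}\in[1,\infty)^2$ and $s\in(1,\infty]$.

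With these checks done, Theorem \ref{thm:spBFS} applies directly. It gives boundedness of $T_{m^c_{a,b}}$ and of $[T_{m^c_{a,b}},\b]_{\a}$ from $\X_1\times\X_2$ to $\X$ for all $\a\in\N^2$ and $\b\in\BMO^2$. For the commutators, the route is Theorem \ref{thm:form} followed by Theorem \ref{thm:Tbbdd}.

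Uniformity in $c$ comes from tracking constants. The sparse constant from \cite{CDY} is independent of $c$. Hence the weighted bound in Theorem \ref{thm:form} is $\Phi([\vec{w}])$ with $\Phi$ independent of $c$. Theorem \ref{thm:BFS} then yields the constant $2^{5}\Phi(C_0\mathcal{N}^2)$, which is also independent of $c$. The same holds for the commutator bounds from \cite[Theorem 2.22]{LMO}, which depend only on the weighted constants.

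The main obstacle is the bookkeeping of exponents: matching the sparse-form indices of \cite{CDY} to the $(\vec{r},s')$ convention. Everything else is immediate from results already established.
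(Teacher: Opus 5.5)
Your proposal is correct and follows the paper's own route: the paper's proof is exactly the combination of the sparse bound \cite[Theorem 1.5]{CDY} with Theorem \ref{thm:spBFS}, taking $m=2$, $\vec r=(r,r)$, $s=r$. Your exponent worry resolves in your favour. Since $1<r<2$ gives $r<r'$, monotonicity $\langle f_3\rangle_{Q,r}\le\langle f_3\rangle_{Q,r'}$ turns a $\Lambda^*_{(r,r),r}$ bound into the needed $\Lambda^*_{(r,r),r'}=\Lambda^*_{\vec r,s'}$ bound with $s=r$, so the stated hypothesis $\M_{2r,2r,(2r)'}$ needs no adjustment.
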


\subsection{Compactness of multilinear commutators}\label{sec:app-Tbcpt}
To prove Theorem \ref{thm:TbBTS}, we first treat the case $\vec{r} = (1, \ldots, 1)$ and $s=\infty$. Given $\delta > 0$, the bilinear Bochner--Riesz means is defined by 
\begin{align*}
\B^{\delta}(f, g)(x) := \int_{\R^{2d}} (1-|\xi|^2-|\eta|^2)^{\delta}_{+} \, 
\widehat{f}(\xi) \, \widehat{g}(\eta) e^{2\pi i x \cdot(\xi+\eta)} \, d\xi \, d\eta. 
\end{align*}

Recall the weighted boundedness of multilinear $\w$-Calder\'{o}n--Zygmund operators $T$ \cite[eq. (5.1)]{COY}, 
multilinear pseudo-differential operator $T_{\sigma}$ \cite[Theorem 2.22]{CIRXY}, and bilinear Bochner--Riesz means \cite[eq. (5.61)]{COY}. Besides, the compactness of their commutators are respectively obtained in 
\cite[Theorem 5.1]{COY}, \cite[Theorem 2.23]{CIRXY}, and \cite[Theorem 5.15]{COY}. We then utilize Theorem \ref{thm:Tbcpt} to conclude the following. 

\begin{theorem}\label{thm:CZOcpt}
Let $\b \in \CMO^m$ and $\X_1, \ldots, \X_m$ be Banach function spaces such that 
\begin{align*}
\M_{(m, \ldots, m), 1}: \X_1^{\frac1m} \times \cdots \times \X_m^{\frac1m} \times \Y' \to L^1 \text{ is bounded},
\end{align*} 
where $\Y = \X^{\frac1m}$ and $\X = \prod_{j=1}^m \X_j$. Then for each $j=1, \ldots, m$, both $[T, \b]_{e_j}$ and $[T_{\sigma}, \b]_{e_j}$ are compact from $\X_1 \times \cdots \times \X_m$ to $\X$. Moreover, in the case $m=2$ and $d \ge 2$, the same assertion holds for $[\B^{d-1/2}, \b]_{e_j}$.
\end{theorem}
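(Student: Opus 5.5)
The plan is to obtain Theorem~\ref{thm:CZOcpt} as a direct instance of Theorem~\ref{thm:Tbcpt} with $\vec{r} = (1, \ldots, 1)$, $s = \infty$ and $\a = e_j$. In this case $m\vec{r} = (m, \ldots, m)$ and $(ms)' = 1$, so the maximal hypothesis of Theorem~\ref{thm:Tbcpt} is exactly the assumed boundedness of $\M_{(m, \ldots, m), 1}$. The condition $(\vec{r}, s) \preceq \vec{p}$ reduces to $p_j \ge 1$, and $A_{\vec{p}, ((1, \ldots, 1), \infty)}$ is the class of Lerner--Ombrosi--P\'erez--Torres--Trujillo-Gonz\'alez. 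It therefore suffices to check the two hypotheses of Theorem~\ref{thm:Tbcpt} for each of $T$, $T_\sigma$ and $\B^{d-1/2}$.

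\emph{Hypothesis (1).} I will fix one convenient tuple, say $p_1 = \cdots = p_m = 2$ (so $1/p = m/2 > 0$). For the $\w$-Calder\'on--Zygmund operator $T$, boundedness $L^{p_1}_{u_1} \times \cdots \times L^{p_m}_{u_m} \to L^p_u$ for all $\vec{u} \in A_{\vec{p}}$ is \cite[eq.~(5.1)]{COY}. Alternatively, it follows from the sparse domination \cite[Theorem 3.6]{DHL} combined with Theorem~\ref{thm:form}. For $T_\sigma$ I will use \cite[Theorem 2.22]{CIRXY}, and for the bilinear Bochner--Riesz means with $m = 2$, $d \ge 2$, $\delta = d - 1/2$ I will use \cite[eq.~(5.61)]{COY}. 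The one point needing care is the normalization: the paper writes $L^p_w = L^p(w^p)$, while these references may state their results with $L^p(w)$. Under the substitution $w_j \mapsto w_j^{p_j}$ the $A_{\vec{p}}$ conditions correspond, so this is only a matter of bookkeeping.

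\emph{Hypothesis (2).} Since $\b \in \CMO^m$, the unweighted compactness of $[T, \b]_{e_j}$, $[T_\sigma, \b]_{e_j}$ and $[\B^{d-1/2}, \b]_{e_j}$ from $L^{q_1} \times \cdots \times L^{q_m}$ to $L^q$ for some $q_j > 1$ is given by \cite[Theorem 5.1]{COY}, \cite[Theorem 2.23]{CIRXY} and \cite[Theorem 5.15]{COY}, respectively. These results are in fact weighted, and specializing to $\vec{w} \equiv 1 \in A_{\vec{q}}$ gives the unweighted statement. The condition $(\vec{r}, s) \preceq \vec{q}$ holds trivially because each $q_j \ge 1$ and $s = \infty$.

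With both hypotheses verified, Theorem~\ref{thm:Tbcpt} yields compactness of each first-order commutator from $\X_1 \times \cdots \times \X_m$ to $\X$. The main obstacle is not conceptual. It lies in matching the exact parameter ranges of the cited results with the requirements here, namely the Dini condition on $\w$, the symbol class and order restrictions for $\sigma$, and the index $\delta = d - 1/2$ with $d \ge 2$ for $\B^{\delta}$. I will handle this by recording, for each operator, the hypotheses under which the cited weighted bound and commutator compactness hold, and checking that a single exponent $\vec{p}$ and a single exponent $\vec{q}$ satisfy those hypotheses.
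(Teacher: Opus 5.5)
Your proposal is correct and follows the paper's argument. The paper likewise specializes Theorem \ref{thm:Tbcpt} to $\vec r=(1,\ldots,1)$, $s=\infty$. It feeds in the weighted bounds \cite[eq.~(5.1)]{COY}, \cite[Theorem 2.22]{CIRXY}, \cite[eq.~(5.61)]{COY} and the commutator compactness results \cite[Theorem 5.1]{COY}, \cite[Theorem 2.23]{CIRXY}, \cite[Theorem 5.15]{COY}. Your remarks on the $L^p_w$ normalization and on verifying $(\vec r,s)\preceq\vec p$ are correct bookkeeping that the paper leaves implicit.
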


In the limited range case, we establish the compactness of commutators of $T_{\Omega}$ on Banach function spaces.

\begin{theorem}\label{thm:roughcpt}
Let $q \in (1, \infty]$ and $\Omega \in L^q(\bS^{md-1})$ with $\int_{\bS^{md-1}} \Omega\, d\sigma=0$. Let $(\vec{r}, s) \in (1, \infty)^{m+1}$ be such that $\frac1s < \sum_{j=1}^m \frac{1}{r_j}$ and $\big(\frac{1}{r_1}, \ldots, \frac{1}{r_m}, \frac{1}{s'} \big) \in \H_{m+1}^d(q)$. Let $\b \in \CMO^m$ and $\X_1, \ldots, \X_m$ be Banach function spaces such that 
\begin{align*}
\M_{m \vec{r}, (ms)'}: \X_1^{\frac1m} \times \cdots \times \X_m^{\frac1m}\times \Y' \to L^1 \text{ is bounded},
\end{align*} 
where $\Y = \X^{\frac1m}$ and $\X = \prod_{j=1}^m \X_j$. Then for each $k=1, \ldots, m$, $[T_{\Omega}, \b]_{e_k}$ is compact from $\X_1 \times \cdots \times \X_m$ to $\X$.
\end{theorem}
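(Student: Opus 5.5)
The plan is to derive Theorem \ref{thm:roughcpt} directly from Theorem \ref{thm:Tbcpt} with $\a = e_k$. This needs two inputs: a weighted boundedness hypothesis for $T_{\Omega}$, and an unweighted compactness hypothesis for $[T_{\Omega}, \b]_{e_k}$.

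First, the weighted boundedness. By \cite[Theorem 1.2]{CDY}, under the assumption $\big(\frac{1}{r_1}, \ldots, \frac{1}{r_m}, \frac{1}{s'}\big) \in \H_{m+1}^d(q)$ the operator $T_{\Omega}$ admits the sparse form domination $|\langle T_{\Omega}(\vec{f}), f_{m+1}\rangle| \lesssim \Lambda^*_{\vec{r}, s'}(\vec{f}, f_{m+1})$; this is exactly the input used in Theorem \ref{thm:rough}. The condition $\frac1s < \sum_j \frac1{r_j}$ guarantees that there exist $\vec{p}$ with $(\vec{r}, s) \prec \vec{p}$ and $\frac1p > 0$. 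Theorem \ref{thm:form} then gives $T_{\Omega} : L^{p_1}_{u_1} \times \cdots \times L^{p_m}_{u_m} \to L^p_u$ for all $\vec{u} \in A_{\vec{p}, (\vec{r}, s)}$. This is hypothesis (1) of Theorem \ref{thm:Tbcpt}, after converting the $L^p(w^p)$ normalization of Theorem \ref{thm:form} into the $L^p_w$ notation of the paper, which is the same space.

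Second, the unweighted compactness. For $b_k \in \CMO$, I would show that $[T_{\Omega}, \b]_{e_k}$ is compact from $L^{q_1} \times \cdots \times L^{q_m}$ to $L^{q}$ for some $\vec{q}$ with $(\vec{r}, s) \preceq \vec{q}$. My first attempt is to cite the known compactness of commutators of multilinear rough singular integrals, for example the corresponding result in \cite{CDY} or the extrapolation-compactness results in \cite{COY}. Failing that, I would use the standard argument:
\begin{enumerate}
\item Since the commutator is bounded on the weighted spaces (Theorem \ref{thm:Tbbdd}, or \cite[Theorem 2.22]{LMO}), and it is linear in $b_k$, it suffices by density to take $b_k \in \mathscr{C}_c^\infty$.
\item Truncate $\Omega$ near the singularity, and approximate $\Omega$ by smooth kernels in $L^q$ via the Grafakos--Honz\'ik--He type decomposition. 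This reduces matters to smooth truncated kernels with smooth $b_k$.
\item For those, verify the Kolmogorov--Riesz conditions: uniform decay at infinity and equicontinuity under translations.
\item Pass to the limit using the quantitative norm bounds, since compact operators are closed in operator norm.
\end{enumerate}
Obtaining this unweighted compactness with exponents in the limited range dictated by $\H^d_{m+1}(q)$ is the main obstacle. Everything else in the proof is formal.

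Once both hypotheses are in hand, Theorem \ref{thm:Tbcpt} with $\a = e_k$, $\b \in \CMO^m$, and the assumed boundedness of $\M_{m\vec{r}, (ms)'}$ yields the compactness of $[T_{\Omega}, \b]_{e_k}$ from $\X_1 \times \cdots \times \X_m$ to $\X$. We only have to make sure that the same pair $(\vec{r}, s)$ is used both in the maximal operator hypothesis and in the sparse bound.
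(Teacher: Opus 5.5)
Your proposal follows the paper's proof. Hypothesis (1) of Theorem~\ref{thm:Tbcpt} comes from the sparse bound of \cite[Theorem 1.2]{CDY} via Theorem~\ref{thm:form}, which is what the paper means by invoking Theorem~\ref{thm:rough}. Hypothesis (2) is an unweighted compactness result for $[T_{\Omega}, \b]_{e_k}$, obtained as in \cite[Theorem 5.13]{COY} exactly along the lines you sketch: split $T_{\Omega}$ into smooth single-scale pieces with the frequency decay of \cite{DS, GHH}, use compactness of Calder\'on--Zygmund commutators for each piece, and pass to $T_{\Omega}$ via \cite[Lemma 2.11]{COY}, which plays the role of your norm-limit step. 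The paper fixes $\vec{q} = (2m, \ldots, 2m)$ and omits these details as you do; your requirement $(\vec{r}, s) \preceq \vec{q}$ is the condition Theorem~\ref{thm:Tbcpt} actually needs.
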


\begin{proof}
In view of Theorem \ref{thm:Tbcpt} and \ref{thm:rough}, it suffices to 
\begin{align}\label{eq:TOb}
[T_{\Omega}, \b]_{e_k} \text{ is compact from $L^{2m}(\Rd) \times \cdots \times L^{2m}(\Rd)$ to $L^2(\Rd)$}. 
\end{align}
Indeed, similar to that of \cite[Theorem 5.13]{COY}, the proof of \eqref{eq:TOb} is mainly based on the frequency decay estimates of \cite{DS, GHH}. It also needs the bounded of $[T_{\Omega}, \b]_{e_k}$ (see Theorem \ref{thm:rough}) and the compactness of multilinear Calder\'{o}n--Zygmund operators (see Theorem \ref{thm:CZO}) on Lebesgue spaces. Moreover, \cite[Lemma 2.11]{COY} provides a way to transfer the compactness of single-scale operators to the original operator. We omit the details. 
\end{proof}

Next, we proceed to multilinear Fourier multipliers, which is defined by
\begin{align*}
T_{\m}(\vec{f})(x) := \int_{\R^{md}} \m(\vec{\xi})
e^{2\pi i x \cdot (\xi_1 + \cdots + \xi_m)} \widehat{f}_1(\xi_1)
\cdots \widehat{f}_m(\xi_m)  \, d\vec{\xi},
\end{align*}
for all $f_i \in \S(\Rd)$, $i=1,\ldots,m$. Given $s \in \R$, the Sobolev space $W^s(\R^{md})$ is defined by the norm
\begin{align*} 
\|f\|_{W^s(\R^{md})} 
:=\bigg(\int_{\R^{md}} (1+|\vec{\xi}|^2)^{s}|\widehat{f}(\vec{\xi})|^2 d\vec{\xi}\bigg)^{\frac12}.
\end{align*}
Let $\Phi \in \S(\R^{md})$ satisfy $\supp \Phi \subset \{\frac12 \le |\vec{\xi}| \le 2\}$ and $\sum_{j \in \Z} \Phi(2^{-j} \vec{\xi}) = 1$ for each $\vec{\xi} \in \R^{md} \setminus\{0\}$. Denote $\m_j(\vec{\xi}) :=\Phi(\vec{\xi}) \m(2^j \vec{\xi})$ for each $j \in \Z$. Denote  
\begin{align*}
\mathcal{W}^s(\R^{md}) 
:= \big\{\m \in L^{\infty}(\R^{md}): \sup_{j \in \Z} \|\m_j\|_{W^s(\R^{md})}<\infty\big\}. 
\end{align*}
The boundedness and compactness of $T_{\m}$ can be found in \cite{COY, FT12, FT14} and \cite{COY, Hu14, Hu17} respectively. In particular, it was shown in \cite[eq. (5.19) and Theorem 5.9]{COY} that for every $\m \in \mathcal{W}^s(\R^{md})$ with $s \in (md/2, md]$ and $\vec{r} \in (1, 2)$ with $\frac{s}{n} = \sum_{j=1}^m \frac{1}{r_j}$, 
\begin{align*}
T_{\m} : & \, L^{p_1}_{w_1} \times \cdots \times L^{p_m}_{w_m} \to L^p_w \, \text{ is bounded},
\\
[T_{\m}, \b]_{e_j}: & \, L^{p_1}_{w_1} \times \cdots \times L^{p_m}_{w_m} \to L^p_w \, \text{ is compact},
\end{align*}
for all $\b \in \CMO^m$, $\frac1p = \sum_{j=1}^m \frac{1}{p_j}$ with $\vec{r} \prec \vec{p}$, and $\vec{w} \in A_{\vec{p}, (\vec{r}, 1)}$, where $w=\prod_{j=1}^m w_j$. Hence, by Theorem \ref{thm:Tbcpt}, the above result allows us to obtain boundedness and compactness on Banach function spaces. 

\begin{theorem}\label{thm:FIO}
Let $\m \in \mathcal{W}^s(\R^{md})$ with $s \in (md/2, md]$ and $\vec{r} \in (1, 2)$ with $\frac{s}{n} = \sum_{j=1}^m \frac{1}{r_j}$. Let $\b \in \CMO^m$ and $\X_1, \ldots, \X_m$ be Banach function spaces such that 
\begin{align*}
\M_{m \vec{r}, m'}: \X_1^{\frac1m} \times \cdots \times \X_m^{\frac1m}\times \Y' \to L^1(\Rd) \text{ is bounded},
\end{align*} 
where $\Y = \X^{\frac1m}$ and $\X = \prod_{j=1}^m \X_j$. Then for each $j=1, \ldots, m$, $[T, \b]_{e_j}: \X_1 \times \cdots \times \X_m \longrightarrow \X$ is compact.
\end{theorem}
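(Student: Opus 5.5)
The plan is to deduce Theorem \ref{thm:FIO} from Theorem \ref{thm:Tbcpt}, applied with $s = 1$ in the role of the target exponent. Strictly speaking, Theorem \ref{thm:Tbcpt} is stated for $s \in (1, \infty]$, so we first note that the endpoint is harmless: $\M_{m\vec{r}, m'}$ corresponds to $(ms)' = m'$ with $s = 1$, and the classes $A_{\vec{p}, (\vec{r}, 1)}$ quoted from \cite[eq. (5.19) and Theorem 5.9]{COY} are the relevant ones. If one insists on $s > 1$, the alternative is to pick $s$ close to $1$ and use the monotonicity $\M_{m\vec{r}, (ms)'} \le \M_{m\vec{r}, m'}$ pointwise for $(ms)' \le m'$. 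That inequality holds because $\langle\cdot\rangle_{Q,t}$ increases in $t$, and $(ms)' < m'$ when $s > 1$. It gives the maximal hypothesis of Theorem \ref{thm:Tbcpt} for that $s$, while every $A_{\vec{p},(\vec{r},s)}$ weight with $p < s$ is in the range covered by \cite{COY}, since the $s$-condition becomes weaker as $s$ decreases. I would take this second route.

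Next I verify the two hypotheses of Theorem \ref{thm:Tbcpt} for the operator $T_{\m}$ and the commutators $[T_{\m}, \b]_{e_j}$, $\b \in \CMO^m$. Hypothesis (1) is the weighted boundedness of $T_{\m}$ from $L^{p_1}_{u_1}\times\cdots\times L^{p_m}_{u_m}$ to $L^p_u$ for some $\vec{p}$ with $(\vec{r}, s) \preceq \vec{p}$ and all $\vec{u}\in A_{\vec{p},(\vec{r},s)}$; this is quoted from \cite[eq. (5.19)]{COY} after fixing any $\vec{p}$ with $\vec{r} \prec \vec{p}$. Hypothesis (2) is the unweighted compactness of $[T_{\m},\b]_{e_j}$ at one exponent $\vec{q}$ with $(\vec{r}, s) \preceq \vec{q}$. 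It follows from \cite[Theorem 5.9]{COY} with the trivial weights $w_j \equiv 1$, which belong to every $A_{\vec{q},(\vec{r},s)}$.

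With both hypotheses in place, Theorem \ref{thm:Tbcpt} with $\a = e_j$ yields compactness of $[T_{\m},\b]_{e_j}\colon \X_1\times\cdots\times\X_m\to\X$. Here $T$ in the displayed conclusion of Theorem \ref{thm:FIO} is read as $T_{\m}$. Boundedness of $T_{\m}$ on $\X_1\times\cdots\times\X_m \to \X$ comes along as well, from Theorem \ref{thm:BFS} applied to the family $\{(T_{\m}\vec{f}, |f_1|, \ldots, |f_m|)\}$.

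The main obstacle is the endpoint $s=1$: Theorem \ref{thm:Tbcpt}, and behind it Theorem \ref{thm:BFS} and \cite[Theorem 1.1]{COY}, are formulated for $s>1$. I expect the monotonicity reduction above to handle it, provided one checks two points. First, the weighted bounds from \cite{COY} must hold for all $\vec{p}$ with $(\vec{r},s)\preceq\vec{p}$ for the chosen $s>1$; this is true since these $\vec{p}$ satisfy $\vec{r}\prec\vec{p}$ strictly after shrinking. Second, the constant in Theorem \ref{thm:BFS} must be allowed to depend on $s$, which is harmless here.
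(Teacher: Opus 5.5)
Your overall strategy matches the paper's: take the weighted boundedness of $T_{\m}$ and the compactness of $[T_{\m},\b]_{e_j}$ from \cite{COY}, the latter with trivial weights, and feed them into Theorem~\ref{thm:Tbcpt}. The problem is the step you use to get around the endpoint $s=1$. You read the class in \cite{COY} as the $s=1$ class. You then claim every weight in $A_{\vec p,(\vec r,s)}$ lies in it ``since the $s$-condition becomes weaker as $s$ decreases''. That monotonicity is backwards. Write $\frac1t=\frac1p-\frac1s$; the $w$-factor in Definition~\ref{def:Apr} is $\langle w^t\rangle_Q^{1/t}$. Here $t$ increases as $s$ decreases, so $[\vec w]_{A_{\vec p,(\vec r,s)}}$ also increases as $s$ decreases. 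Hence $A_{\vec p,(\vec r,s_1)}\subset A_{\vec p,(\vec r,s_2)}$ whenever $s_1<s_2$. Under your reading, a bound for the $s=1$ class says nothing about the larger class $A_{\vec p,(\vec r,s)}$ with $s>1$. So hypothesis (1) of Theorem~\ref{thm:Tbcpt} is not verified by your argument. Your reading is also inconsistent with the result you quote: $(\vec r,1)\preceq\vec p$ would force $p\le 1$, whereas \cite{COY} is stated for all $\vec p$ with $\vec r\prec\vec p$.

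The repair is to read ``$(\vec r,1)$'' in \cite{COY} in the Li--Martell--Ombrosi convention $(r_1,\dots,r_m,r_{m+1})$ with $r_{m+1}=1$. This is $s=r_{m+1}'=\infty$ in this paper's notation, the \emph{largest} of the classes. With that reading, $A_{\vec p,(\vec r,s)}\subset A_{\vec p,(\vec r,\infty)}$ for every $s$. Your $s$-near-$1$ route then goes through, but for the opposite reason to the one you gave; it needs $m\ge 2$, so that $r<1<s$ leaves room for an admissible $\vec p$. The detour is also unnecessary: take $s=\infty$ directly in Theorem~\ref{thm:Tbcpt}. Its maximal hypothesis is boundedness of $\M_{m\vec r,1}$. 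This follows from the stated bound for $\M_{m\vec r,m'}$ by exactly your pointwise monotonicity $\M_{m\vec r,1}\le\M_{m\vec r,m'}$. Hypotheses (1) and (2) then come from \cite[eq.~(5.19), Theorem~5.9]{COY} for any $\vec p,\vec q$ with $\vec r\prec\vec p$ and $\vec r\prec\vec q$. A smaller point: Theorem~\ref{thm:Tbcpt} needs the weighted bound for only one such $\vec p$, not for all $\vec p$ with $(\vec r,s)\preceq\vec p$. The endpoint cases $p_j=r_j$ are not covered by \cite{COY} anyway.
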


\subsection{Compactness of bilinear Calder\'{o}n--Zygmund operators}\label{sec:app-CZOcpt}
This section is devoted to proving the compactness of a class of bilinear Calder\'{o}n--Zygmund operators, which satisfy the following hypotheses: 
\begin{list}{(\theenumi)}{\usecounter{enumi}\leftmargin=1cm \labelwidth=1cm \itemsep=0.1cm \topsep=0.2cm \renewcommand{\theenumi}{H\arabic{enumi}}}
 
\item\label{H1} $T$ is associated with a compact bilinear Calder\'{o}n--Zygmund kernel, 

\item\label{H2} $T$ satisfies the weak compactness property, 

\item\label{H3} $T(1,1), T^{*1}(1,1), T^{*2}(1,1) \in \CMO(\Rd)$. 
\end{list}
The reader is referred to \cite{CLSY} for a precise definition.

Let us present some examples of bilinear Calder\'{o}n--Zygmund operators which satisfy the hypotheses \eqref{H1}--\eqref{H3}. Let $\Phi, \Psi \in \mathscr{C}_c^{\infty}(\Rd)$ be radial functions such that $\supp (\Phi) \subset B(0, 1)$, $\int_{\Rd} \Phi (x) \, dx = 1$, $\int_{\Rd} \Psi(x) \, dx = 0$, and $\int_0^{\infty} |\widehat{\Psi}(t e_1)|^2 \frac{dt}{t} =1$, where $e_1 := (1, 0, \ldots, 0)$. For any $t>0$, define convolution operators 
\begin{align*}
P_t f := \Phi_t *f 
\quad\text{ and }\quad 
Q_t f := \Psi_t * f, 
\end{align*} 
where $\Phi_t(x) := t^{-n} \Phi(t^{-1} x)$ and $\Psi_t(x) := t^{-n} \Psi(t^{-1} x)$. Given $b \in \BMO(\Rd)$, the {\tt bilinear continuous paraproduct} $\pi_b$ is defined by 
\begin{align*}
\pi_b (f, g) 
:= \int_0^{\infty} Q_t \big( (Q_t b) (P_t f) (P_t g) \big) \, \frac{dt}{t}. 
\end{align*}
For studies on paraproducts, see \cite{Bony, CM78, DJ, Yab}.

Let us introduce bilinear dyadic paraproducts, which have developed into a fundamental tool in bilinear dyadic analysis, see \cite{CDLSY, CLSY, CY}. Fix a dyadic grid $\D$ and let $\{\psi_I\}_{I \in \D}$ be a system of wavelets. Given a function $b \in \BMO(\Rd)$, the {\tt bilinear dyadic paraproduct} are defined by
\begin{align*}
\Pi_b(f_1, f_2) 
:= \sum_{I \in \D} \langle b, \psi_I \rangle \langle f_1 \rangle_I \langle f_2 \rangle_I \,  \psi_I. 
\end{align*}
In addition, to investigate the compactness of bilinear pseudo-differential operators $T_{\sigma}$, we recall a new class of symbols used in \cite{CLSY}. Given $\tau \in \R$ and $\rho, \delta \in [0, 1]$, we say that $\sigma \in \mathcal{K}_{\rho, \delta}^{\tau}$ if it satisfies 
\begin{align*}
\big|\partial_{x}^{\alpha} \partial_{\xi}^{\beta} \partial_{\eta}^{\gamma} \sigma(x, \xi, \eta) \big|
\leq C_{\alpha, \beta, \gamma}(x, \xi, \eta) (1+ |\xi| + |\eta|)^{\tau + \delta|\alpha| - \rho (|\beta| + |\gamma|)}, 
\end{align*}
for all multi-indices $\alpha$, $\beta$ and $\gamma$, where $C_{\alpha, \beta, \gamma}$ is a bounded function satisfying 
\begin{align*}
\lim_{|x| + |\xi| + |\eta| \to \infty} C_{\alpha,\beta, \gamma}(x, \xi, \eta)
=0.
\end{align*}

It was shown in \cite{CLSY} that
\begin{equation}\label{eq:clsy-1}
\begin{aligned}
& \text{bilinear Calder\'{o}n--Zygmund operators satisfying \eqref{H1}--\eqref{H3} are compact}
\\
& \quad \text{ from $L^{p_1}_{w_1} \times \cdots \times L^{p_m}_{w_m}$ to $L^p_w$ for all $\vec{p} \in (1, \infty)^m$ and $\vec{w} \in A_{\vec{p}}$},
\end{aligned}
\end{equation}
and 
\begin{align}\label{eq:clsy-2}
\text{$\mathbb{T}$ is a bilinear Calder\'{o}n--Zygmund operators and satisfies \eqref{H1}--\eqref{H3}},
\end{align}
for every $\mathbb{T} \in \big\{\pi_b, \Pi_b, T_{\sigma}, [\mathfrak{b}, T_a]_1, [\mathfrak{b}, T_a]_2 \big\}$, where $\sigma \in \mathcal{K}_{1, \delta}^0$ with $\delta \in [0, 1)$, $a \in \mathcal{K}_{1, 0}^1$, and $\nabla \mathfrak{b} \in L^{\infty}(\Rd)$.

Then the following result is a consequence of \eqref{eq:clsy-1}--\eqref{eq:clsy-2} and Theorem \ref{thm:BFScpt}.

\begin{theorem}\label{thm:CLSY}
Let $b \in \CMO$, $\sigma \in \mathcal{K}_{1, \delta}^0$ with $\delta \in [0, 1)$, $a \in \mathcal{K}_{1, 0}^1$, and $\mathfrak{b}$ be a Lipschitz function with $\nabla \mathfrak{b} \in L^{\infty}(\Rd)$. Let $\X_1, \ldots, \X_m$ be Banach function spaces such that 
\begin{align*}
\M_{(m, \ldots, m), 1}: \X_1^{\frac1m} \times \cdots \times \X_m^{\frac1m}\times \Y' \to L^1(\Rd) \text{ is bounded},
\end{align*} 
where $\Y = \X^{\frac1m}$ and $\X = \prod_{j=1}^m \X_j$. Then $\mathbb{T}: \X_1 \times \cdots \times \X_m \longrightarrow \X$ is compact for every $\mathbb{T} \in \big\{\pi_b, \Pi_b, T_{\sigma}, [\mathfrak{b}, T_a]_1, [\mathfrak{b}, T_a]_2 \big\}$.
\end{theorem}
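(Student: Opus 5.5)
Theorem \ref{thm:CLSY} follows by feeding \eqref{eq:clsy-1}--\eqref{eq:clsy-2} into Theorem \ref{thm:BFScpt} with $m=2$, $\vec{r}=(1,1)$ and $s=\infty$. With these parameters $m\vec{r}=(2,2)$ and $(ms)'=1$, so the maximal-operator hypothesis of Theorem \ref{thm:BFScpt} is exactly the assumed boundedness of $\M_{(m,\ldots,m),1}$. The class $A_{\vec{p},(\vec{1},\infty)}$ is the Lerner--Ombrosi--P\'erez--Torres--Trujillo-Gonz\'alez class $A_{\vec{p}}$, written in the $L^p_w=L^p(w^p)$ normalization. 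It remains to verify hypotheses (1) and (2) of Theorem \ref{thm:BFScpt} for each $\mathbb{T}$ in the list.

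By \eqref{eq:clsy-2}, each $\mathbb{T}\in\{\pi_b,\Pi_b,T_\sigma,[\mathfrak{b},T_a]_1,[\mathfrak{b},T_a]_2\}$ is a bilinear Calder\'on--Zygmund operator satisfying \eqref{H1}--\eqref{H3}. For hypothesis (1), I note that a bilinear Calder\'on--Zygmund operator with standard kernel is an $\omega$-CZO with $\omega(t)=t^\delta\in$ Dini. Hence it admits the sparse domination \cite[Theorem 3.6]{DHL} with $\vec{r}=(1,1)$ and $s'=1$. Theorem \ref{thm:form} then gives boundedness $L^{p_1}_{w_1}\times L^{p_2}_{w_2}\to L^p_w$ for a fixed $\vec p\in(1,\infty)^2$, say $p_1=p_2=4$ and $p=2$, and all $\vec w\in A_{\vec p}$. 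Alternatively, (1) is contained in \eqref{eq:clsy-1}, since compact operators are bounded. Since $p_j\ge1$, $p\le\infty$ and $1/p>0$, we have $(\vec r,s)\preceq\vec p$. Hypothesis (2) is immediate from \eqref{eq:clsy-1}: take $\vec q=\vec p$ and the trivial weights $\vec v=(1,1)$, which lie in $A_{\vec q}$. This yields compactness $L^{q_1}\times L^{q_2}\to L^q$.

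Theorem \ref{thm:BFScpt} then gives compactness of $\mathbb{T}:\X_1\times\X_2\to\X$. The only point I expect to need care is matching conventions. First, the standing hypothesis on $b$ in \eqref{eq:clsy-2} for the paraproducts must be satisfied: $b\in\CMO$ is what \cite{CLSY} requires for \eqref{H3}, specifically $T^{*j}(1,1)\in\CMO$. Second, the symbol classes $\mathcal{K}^0_{1,\delta}$ and $\mathcal{K}^1_{1,0}$ with $\nabla\mathfrak{b}\in L^\infty$ must be exactly those covered there. Third, the statement is written for general $m$ but the operators are bilinear, so it should be read with $m=2$ throughout.
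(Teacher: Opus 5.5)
Your proposal is correct and matches the paper's argument. The paper's proof is the same one-line derivation: specialize Theorem \ref{thm:BFScpt} to $\vec r=(1,\ldots,1)$ and $s=\infty$, so that $\M_{m\vec r,(ms)'}=\M_{(m,\ldots,m),1}$, and take both the weighted boundedness hypothesis (1) and the unweighted compactness hypothesis (2) from \eqref{eq:clsy-1}--\eqref{eq:clsy-2}. Your detour through \cite[Theorem 3.6]{DHL} and Theorem \ref{thm:form} for (1) is harmless but unnecessary, since the compactness in \eqref{eq:clsy-1} already gives boundedness for all $\vec w\in A_{\vec p}$.
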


\end{document}